\documentclass[a4paper,11pt]{article}
\usepackage[top=3.0cm, bottom=3.0cm, inner=3.0cm, outer=3.0cm, includefoot]{geometry}

\usepackage[utf8]{inputenc}
\usepackage[T1]{fontenc}
\usepackage{authblk}
\usepackage{amsfonts}
\usepackage{verbatim}
\usepackage{multirow}
\usepackage{multicol}

\usepackage{amssymb}
\usepackage{amsmath}
\usepackage{mathtools,bbm}
\usepackage{graphicx,tikz}
\usepackage{amsthm}
\usepackage{caption,subcaption}

\usepackage{color}
\usepackage{enumerate}

\usepackage{cancel}

\setlength{\parindent}{0mm}
\setlength{\parskip}{2mm }

\newcommand{\supp}{\operatorname{supp}}

\newcommand{\eChar}{\begin{enumerate}[(i)]}
\newcommand{\eCharR}{\begin{enumerate}[(a)]}
\newcommand{\eBr}{\begin{enumerate}[(1)]}

\newcommand{\Abstract}


\title
{
Symmetric Matrices, Signed Graphs, and \\ Nodal Domain Theorems
}

%

\author{Chuanyuan Ge*}\author{Shiping Liu}

\affil{School of Mathematical Sciences,University of Science and Technology of China, Hefei 230026, China, \\
*Correspondence to be sent to: e-mail: gechuanyuan@mail.ustc.edu.cn}


\date{}

\theoremstyle{plain}
\newtheorem{lemma}{Lemma}[section]
\newtheorem{theorem}[lemma]{Theorem}
\newtheorem{proposition}[lemma]{Proposition}
\newtheorem{corollary}[lemma]{Corollary}

\theoremstyle{definition}

\newtheorem{definition}[lemma]{Definition}
\newtheorem{remark}[lemma]{Remark}

\newtheorem{example}[lemma]{Example}

\numberwithin{equation}{section}

\begin{document}

\maketitle

\pagestyle{plain}

\begin{abstract}
In 2001, Davies, Gladwell, Leydold, and Stadler proved discrete nodal domain theorems for eigenfunctions of generalized Laplacians, i.e., symmetric matrices with non-positive off-diagonal entries. In this paper, we establish nodal domain theorems for arbitrary symmetric matrices by exploring the induced signed graph structure. Our concepts of nodal domains for any function on a signed graph are switching invariant. When the induced signed graph is balanced, our definitions and upper bound estimates  reduce to existing results for generalized Laplacians. Our approach provides a more conceptual understanding of Fiedler's results on eigenfunctions of acyclic matrices. This new viewpoint leads to lower bound estimates for the number of strong nodal domains which improves previous results of Berkolaiko and Xu-Yau. We also prove a new type of lower bound estimates by a duality argument.
\end{abstract}

\section{Introduction}
Courant's nodal domain theorem is a basic result in spectral theory with wide applications.
The theorem, proved in 1920s \cite{Courant23,CH}, states that the nodal lines of the $k$-th eigenfunction $f_k$ of a self-adjoint second order elliptic differential operator can not divide the domain $D$ into more than $k$ different subdomains. Here the nodal lines are refered to the set of zeros (nodes) of eigenfunctions and the subdomains are now known as the nodal domains. Courant's theorem can be considered as a natural generalization of Sturm’s oscillation theorem for second order ODEs that the zeros of the $k$-th eigenfunction of a vibrating string divide the string into exactly $k$ subintervals. There are abundant extensions of Courant's theorem to non-linear operators like $p$-Laplacians, and to Riemannian manifolds and more general settings with less regularity, including discrete settings, see, e.g., \cite{CSZ17,CFG00,Cheng76,JMZ21,JZ2106,KS20,TH18}.

The study of discrete nodal domain theorems on graphs dates back to the work of Gantmacher and Krein \cite{GK50}, which contains a discrete analogue of Sturm's theorem for strings. Many of Fiedler's results in 1970s \cite{Fiedler73,Fiedler75,Fiedler752} can be interpreted as discrete nodal domain estimates.
Important progresses in this aspect can be found in the works of Powers \cite{Powers88}, Roth \cite{Roth89}, Friedman \cite{Friedman93},  Colin de Verdi\`{e}re \cite{CdV93}, van der Holst \cite{vdH95,vdH96}, Duval and Reiner \cite{DR99}, etc. The discrete nodal domain theorems for \emph{generalized Laplacians}, i.e., symmetric matrices with non-positive off-diagonal entries, were eventually established by Davies, Gladwell, Leydold, and Stadler \cite{DGLS01} in 2001. We refer to \cite[Section 2]{DGLS01} for a detailed historical review. There are many further advances in this topic, see, e.g., \cite{B03,BLS05,BHLPS04,GZ02,LLMY10,Lovasz21} and the book \cite{BLS07}.

Nodal domain theorems for graphs have striking difference from their $\mathbb{R}^d$ analogue. One of the key steps in establishing the discrete nodal domain theorems is to come up with the correct concept of nodal domains on graphs: We need distinguish strong and weak nodal domains \cite[Definitions 1 and 2]{DGLS01} (see also Definition \ref{def:DGLS} below) since the zeros are discrete. The nodal domain theorems in \cite{DGLS01} states that the number of strong nodal domains of the $k$-th eigenfunction of a generalized Laplacian is no greater than $k+r-1$, where $r$ is the multiplicity of the corresponding eigenvalue; When the corresponding graph is connected, the number of weak nodal domains of the $k$-th eigenfunction is no greater than $k$. The strong nodal domain estimates can not be improved due to the example of star graphs given in Friedman \cite{Friedman93}. The weak nodal domain estimates has been correctly stated in the work of Colin de Verdi\`{e}re \cite{CdV93} and Friedman \cite{Friedman93} and a complete proof was given in \cite{DGLS01}.  While there are no nontrivial lower bound for the number of nodal domains of $D\subset\mathbb{R}^d$, $d\geq 2$,  Berkolaiko \cite{Berkolaiko08} established a non-trivial lower bound estimate for the number of strong nodal domains on graphs, which were strengthened later by Xu and Yau \cite{XY12}. This can be considered as strong extensions of the result of Sturm and its discrete counterparts: When the graph is not far from being a tree and the eigenfunction has only few zeros, the number of corresponding strong nodal domains is very close to the upper bound $k+r-1$.
%

It is natural to ask for discrete nodal domain theorems of general symmetric matrices. This includes important cases of signed graph Laplacian \cite{AL20}  and related Schr\"odinger operators \cite{BGKLM20} on finite grpahs or on subgraphs with Dirichlet boundary condition. One motivation comes from the finite element method (FEM) approximation of differential equations. Gladwell and Zhu \cite{GZ02} studied the nodal domain theorem for an approximate FEM solution to the Helmholtz equation corresponding to some refined or crude mesh. Recall that the FEM reduces the Helmholtz equation to a form $M f=0$, where $M$ is a symmetric matrix. The signs of the off-diagonal entries of $M$ depend on the characteristic of the mesh: For the case of a triangular mesh in $\mathbb{R}^2$, $M$ is a generalized Laplacian if all the triangles are acute-angled; Some off-diagonal entries of $M$ might be positive if some triangles are obtuse-angled. Gladwell and Zhu \cite{GZ02} mentioned that it is easy to construct counterexamples of meshes with some obtuse-angled triangles for which the discrete nodal domain theorems in \cite{DGLS01} fails. Mohammadian \cite{Mohammadian16} has proved the strong nodal domain theorem for any symmetric matrices. However, his definition of "weak" nodal domains is different from \cite[Definition 2]{DGLS01}: For a star graph on $n\geq 3$ vertices, any eigenfunction to the second Laplacian eigenvalue has two weak nodal domains while it only has one "weak" nodal domain in the sense of Mohammadian (see Remark \ref{rmk:Mohammadian}).  For the case of signless graph Laplacians, i.e., symmetric matrices with all off-diagonal entries non-negative, the so-called nonzero nodal domains of a function (maximal connected induced subgraphs on non-zeros of the function) have been studied in \cite{JMZ21,JZ2106}. The signs of the function values do not play a role in this approach.

In this paper, we introduce proper concepts of nodal domains for arbitrary symmetric matrices and establish various upper and lower bound estimates which unifies the approach of Davies et al. \cite{DGLS01} for generalized Laplacians and that of Fiedler \cite{Fiedler75} for acyclic matrices.
We do this via the induced signed graph of a symmetric matrix.
A signed graph $\Gamma=(G,\sigma)$ is a graph $G=(V,E)$ whose edges are labelled by a signature $\sigma: E\to \{+1,-1\}$. By an induced signed graph $\Gamma$ of an $n\times n$ symmetric matrix $M=(M_{ij})$, we mean a graph $G$ with vertices $\{x_1,\ldots, x_n\}$, where $\{x_i,x_j\}$ is an edge if and only if $i\neq j$ and $M_{ij}\neq 0$, and the sign of each edge is $\sigma_{x_ix_j}=-M_{ij}/|M_{ij}|$.
In the definitions \cite[Definitions 1 and 2]{DGLS01} (see Definition \ref{def:DGLS}), the strong and weak nodal domains of a function $f$ are decided by the sign of $f$ at each vertex. In our concepts (Definition \ref{def:nodaldomain}), the signature of edges also plays an important role. We introduce strong and weak nodal domain walks (see Definitions \ref{def:strong} and \ref{def:weak} below) which further induce two kinds of equivalent relations on the set of non-zeros of a function $f$. Building upon the corresponding equivalent classes, we define strong and weak nodal domains of a function $f$ (Definition \ref{def:nodaldomain}). When the symmetric matrix is a generalized Laplacian, that is, when the induced signed graph has all-positive signature, our definition coincides with that of \cite[Definitions 1 and 2]{DGLS01}.

Signed graphs and the fundamental ideas of balance and switching has led to systematic and deep understandings for various parts of graph theory, e.g., for matroid theory \cite{Zaslavsky82} and for Cheeger constants and related eigenvalue estimates \cite{AL20,LMP19}. The sign of a cycle is defined to be the product of the signs of all its edges. A signed graph is called balanced if the sign of every cycle is positive. If the induced signed graph $\Gamma$ of a symmetric matrix $M$ is balanced, then there exists a diagonal matrix $D(\tau)$ of a function $\tau: V\to \{+1,-1\}$, i.e., $D_{ii}=\tau(x_i)$ for any $i$, such that $$M^\tau:=D(\tau)^{-1}MD(\tau)=D(\tau)MD(\tau)$$ is a generalized Laplacian. Notice that the symmetric matrix $M$ and the generalized Laplacian matrix $M^\tau$ share the same set of eigenvalues and
\[Mf_k=\lambda_kf_k \,\,\text{if and only if}\,\,M^\tau \tau f_k=\lambda_k\tau f_k.\]
It turns out that the strong and weak nodal domains in our sense (Definition \ref{def:nodaldomain}) of $f_k$ as an eigenfunction of $M$ coincide with the strong and weak nodal domains in the sense of \cite[Definitions 1 and 2]{DGLS01} (see Definition \ref{def:DGLS}) of $\tau f_k$ as an eigenfunction of the generalized Laplacian $M^\tau$. In general, our definition of strong and weak nodal domains are switching invariant (Theorem \ref{Thm:switch invariant}).

With the new definition of nodal domains on signed graphs, we extend the theorems of Davies et al. \cite{DGLS01} to arbitrary symmetric matrices (Theorems \ref{Thm:upper bound}). Particularly, a discrete unique continuation theorem using our concept of weak nodal domains holds for eigenfunctions of any symmetric matrix (Lemma \ref{lemma:unique}). Building upon this fact, we prove the number of weak nodal domains of the $k$-th eigenfunction of any symmetric matrix with a connected induced signed graph is no greater than $k$.  We also show the number of strong nodal domains of the $k$-th eigenfunction of any symmetric matrix with \emph{minimal support} (Definition \ref{def:minisupp}) is no greater than $k$.

Our approach using signed graphs leads to more conceptual understanding of existing results. For example, Roth \cite{Roth89} proved that for the generalized Laplacian on a connected bipartite graph, the largest eigenvalue $\lambda_n$ is simple and the numbers of strong and weak nodal domains of the corresponding  eigenfunction $f_n$ are both equal to $n$.  Since any bipartite graph with an all-positive signature is antibalacned, Roth's result becomes a direct consequence of the Perron-Frobenius theorem (see Theorem \ref{Thm:n}). Fiedler \cite{Fiedler75} studied eigenfunctions of acyclic matrices and found close relations between the signs of eigenfunction values and the positions of the corresponding eigenvalues. We reformulate Fiedler's results as discrete nodal domain estimates for symmetric matrices whose induced signed graphs have no cycle.
Notice that any signed graph with no cycle is balanced.

Inspired by this reformulation of Fiedler's work, we obtain an interesting multiplicity formula (Corollary \ref{cor:multiplicitytilde}), and lower bound estimates for the number of strong nodal domains, which improves the results of Berkolaiko \cite{Berkolaiko08} and Xu and Yau \cite{XY12} even in the balanced case. A particular subset of zeros, which we call \emph{Fiedler zero set} (see Definition \ref{def:Fiedler zero set}), plays an important role in those results.

 In our approach,  a duality argument via considering the quantity
 \[\mathfrak{S}(f)+\overline{\mathfrak{S}}(f)\]
is quite useful, where $\mathfrak{S}(f)$ is the number of strong nodal domains of the function $f$ on a signed graph $\Gamma=(G,\sigma)$ and $\overline{\mathfrak{S}}(f)$ is that of $f$ on its negation $-\Gamma=(G,-\sigma)$. Using this argument, we prove a new type of lower bound estimates for the number of strong nodal domains involving properties of leaves, i.e., vertices with degree $1$ (Theorem \ref{Thm:lower bound 2}).


The paper is structured as follows. In Section \ref{section:pre}, we collect preliminaries on symmetric matrices and signed graphs, particularly on the concepts of balance, antibalance, and switching of signed graphs. In Section \ref{section:concepts}, we present the concepts of strong and weak nodal domains on singed graphs, and discuss their basic properties. In Section \ref{section:upper}, we prove discrete nodal domain theorems for arbitrary symmetric matrices. In Section \ref{Section:Fiedler}, we reformulate the main theorem of Fiedler \cite{Fiedler75} on the eigenfunctions of acyclic matrices as discrete nodal domain estimates on signed graphs with no cycle, and derive an interesting multiplicity formula. Finally, we show two lower bound estimates for the number of strong nodal domains of any symmetric matrix in Section \ref{Section:lowerbd}.

\section{Preliminaries}\label{section:pre}
We first give the following definition which we will used often in this paper.
\begin{definition}[walk and path]
	A walk in a graph $G=(V,E)$ is a  sequence  of vertices $\{v_i\}_{i=1}^k$ for $k\geq 2$, such that $\{v_j,v_{j+1}\}\in E$ for $1\leq j \leq k-1$. A path is a walk such that all vertices $v_i$ are distinct. 
\end{definition}
Let $M$ be an $n\times n$ symmetric matrix. We list its eigenvalues with multiplicity as follows:
\[\lambda_1 \leq \lambda_2 \leq \cdot\cdot\cdot\cdot\leq \lambda_n.\]
Recall the following mini-max principle.
\begin{lemma}\label{Lemma:minmax}
Let $P_k$ and $P_k^{\perp}$ be the sets of subspaces of $\mathbb{R}^n$ with dimension at least $k$ and with codimension at most $k$, respectively. Then
	\begin{equation}
		\lambda_k=
		\min \limits_{P\in P_k} \max \limits_{0\neq g\in P}\frac{\langle g,Mg \rangle}{\langle g,g \rangle}=\max \limits_{P\in P_{k-1}^{\perp}}\min \limits_{0\neq g\in P\in P_k}\frac{\langle g,Mg \rangle}{\langle g,g \rangle},		
	\end{equation}
where $\langle \cdot,\cdot\rangle$ stands for the Euclidean inner product of $\mathbb{R}^n$.
\end{lemma}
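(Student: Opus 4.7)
The plan is to invoke the spectral theorem for symmetric matrices and then run the standard Courant--Fischer dimension-counting argument. First I would fix an orthonormal basis $v_1,\ldots,v_n$ of $\mathbb{R}^n$ consisting of eigenvectors of $M$ with $Mv_i=\lambda_i v_i$. Any $g\in\mathbb{R}^n$ may then be written as $g=\sum_i c_i v_i$, so that
\[
\frac{\langle g, Mg\rangle}{\langle g,g\rangle} = \frac{\sum_i \lambda_i c_i^2}{\sum_i c_i^2},
\]
which lies in $[\lambda_{\min\{i:c_i\neq 0\}},\lambda_{\max\{i:c_i\neq 0\}}]$. This elementary identity drives both equalities.

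For the first equality, I would establish the two inequalities separately. For the upper bound, pick the explicit test subspace $P_0:=\mathrm{span}(v_1,\ldots,v_k)$, which has dimension $k$ and hence lies in $P_k$; by the displayed identity, $\max_{0\neq g\in P_0}\langle g,Mg\rangle/\langle g,g\rangle=\lambda_k$, so the min-max is at most $\lambda_k$. For the lower bound, fix any $P\in P_k$, i.e.\ with $\dim P\geq k$, and consider its intersection with $W_k:=\mathrm{span}(v_k,v_{k+1},\ldots,v_n)$, which has dimension $n-k+1$. Since $\dim P+\dim W_k\geq k+(n-k+1)=n+1$, we have $P\cap W_k\neq\{0\}$; picking any $0\neq g$ in this intersection yields $\langle g,Mg\rangle/\langle g,g\rangle\geq \lambda_k$ by the identity. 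Thus $\max_{0\neq g\in P}\langle g,Mg\rangle/\langle g,g\rangle\geq \lambda_k$, giving the required lower bound after taking the minimum over $P$.

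The second equality proceeds symmetrically. For the lower bound, take $P_1:=\mathrm{span}(v_k,\ldots,v_n)$, whose orthogonal complement has dimension $k-1$, so $P_1\in P_{k-1}^\perp$; the identity gives $\min_{0\neq g\in P_1}\langle g,Mg\rangle/\langle g,g\rangle=\lambda_k$. For the upper bound, for any $P\in P_{k-1}^\perp$, meaning $\dim P\geq n-k+1$, intersect it with $U_k:=\mathrm{span}(v_1,\ldots,v_k)$; dimension counting gives $\dim(P\cap U_k)\geq (n-k+1)+k-n=1$, so there is a nonzero $g\in P\cap U_k$ with $\langle g,Mg\rangle/\langle g,g\rangle\leq \lambda_k$, whence the minimum over $P$ is at most $\lambda_k$.

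I do not expect a serious obstacle, since this is a textbook result; the only mild point to watch is that the statement allows subspaces of dimension at least $k$ (respectively codimension at most $k-1$) rather than exactly $k$ (respectively $k-1$). The upper-bound half of each equality is realized on the explicit $k$-dimensional choice, while the lower-bound half uses only a dimensional inequality, so the "$\geq k$" and "$\leq k-1$" formulations cause no trouble. The whole argument is purely linear-algebraic and does not interact with the signed-graph framework developed in the rest of the paper.
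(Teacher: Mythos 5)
Your proof is correct: it is the standard Courant--Fischer argument via the spectral theorem, the Rayleigh-quotient identity, and dimension counting, and it correctly handles the ``dimension at least $k$'' / ``codimension at most $k-1$'' formulations through the one-sided dimension inequalities. The paper itself gives no proof of this lemma --- it is merely recalled as the classical min-max principle --- so your argument is exactly the expected one and there is nothing further to compare.
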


\begin{definition}\label{def:minisupp}
We say an eigenfunction $f$ of $M$ has \emph{minimal support} if for each eigenfunction $g$ corresponding to the same eigenvalue as $f$ with $\supp(g)\subseteq \supp(f)$ one has $\supp(g)= \supp(f)$. Here we use the notion $\supp(f):=\{i\in \{1,\ldots,n\}: f(i)\neq 0\}.$
\end{definition}

\begin{lemma}\label{lemma:2.3}
For any $n\times n$ symmetric matrix $M$, there exists a basis $\{f_1,\ldots,f_n\}$ consisting of eigenfunctions with minimal support.
\end{lemma}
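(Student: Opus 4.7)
The plan is to reduce the problem to a single eigenspace. Since $M$ is symmetric, $\mathbb{R}^n$ splits as an orthogonal direct sum of the eigenspaces $E_\lambda$, so it suffices to show, for each eigenvalue $\lambda$, that the eigenspace $E_\lambda$ admits a basis of minimal-support eigenfunctions. Concatenating such bases over all $\lambda$ gives the desired basis of $\mathbb{R}^n$.

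Fix $\lambda$ and let $\mathcal{M}_\lambda \subseteq E_\lambda$ denote the set consisting of $0$ together with all eigenfunctions in $E_\lambda$ having minimal support (in the sense of Definition \ref{def:minisupp}). I would prove the stronger statement that $\mathcal{M}_\lambda$ already linearly spans $E_\lambda$; then one extracts a basis of $E_\lambda$ from $\mathcal{M}_\lambda$ in the usual way. Suppose, for contradiction, that $W := \mathrm{span}(\mathcal{M}_\lambda)$ is a proper subspace of $E_\lambda$. Among all $g \in E_\lambda \setminus W$, choose one with $|\supp(g)|$ minimal; this is possible since support sizes are nonnegative integers.

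Since $g \notin \mathcal{M}_\lambda$, $g$ does not have minimal support, so there is a nonzero $h \in E_\lambda$ with $\supp(h) \subsetneq \supp(g)$. By iterating this observation (descending on $|\supp(\cdot)|$, which is a nonnegative integer and must stabilize), one may replace $h$ by an eigenfunction lying in $\mathcal{M}_\lambda$ whose support is still strictly contained in $\supp(g)$; call the resulting function $h^*$. Pick $i_0 \in \supp(h^*)$ and set $\alpha := g(i_0)/h^*(i_0)$. Define
\[
g' := g - \alpha h^*.
\]
Then $g'(i_0) = 0$ and $\supp(g') \subseteq \supp(g)$, so $|\supp(g')| < |\supp(g)|$. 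Since $h^* \in W$, we have $g' \in W$ if and only if $g \in W$; hence $g' \in E_\lambda \setminus W$. If $g' = 0$, then $g = \alpha h^* \in W$, contradicting $g \notin W$. Otherwise $g' \in E_\lambda \setminus W$ has strictly smaller support than $g$, contradicting the minimality of $|\supp(g)|$. Either way we reach a contradiction, so $W = E_\lambda$ and a basis of minimal-support eigenfunctions exists.

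The only genuinely non-routine point is justifying the existence of $h^* \in \mathcal{M}_\lambda$ with $\supp(h^*) \subsetneq \supp(g)$; this is where the finiteness of the ground set $\{1,\dots,n\}$ is essential, allowing a descent argument on support sizes. Everything else is linear algebra: the construction $g \mapsto g - \alpha h^*$ is the standard Gaussian-elimination trick, and its compatibility with the coset $g + W$ is what keeps the contradiction alive.
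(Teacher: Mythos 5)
Your argument is correct, but it takes a genuinely different route from the paper. The paper's proof is constructive: within each eigenspace it takes a reduced row echelon basis $\eta_1,\dots,\eta_r$ (pivots in, say, the first $r$ coordinates) and observes that each $\eta_i$ automatically has minimal support, because any eigenfunction supported inside $\supp(\eta_i)$ must vanish at every pivot except $i$ and is therefore a scalar multiple of $\eta_i$. You instead prove, by an extremal/descent argument, the stronger-sounding statement that the minimal-support eigenfunctions span each eigenspace: if not, take $g$ outside their span with $|\supp(g)|$ smallest, extract (by descending on support size, which is where finiteness enters) a minimal-support $h^*$ with $\supp(h^*)\subsetneq\supp(g)$, and kill a coordinate of $g$ with $h^*$ to contradict minimality; the key point, which you state correctly, is that $g-\alpha h^*$ stays in the same coset modulo the span. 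Both proofs are complete and elementary; the paper's is shorter and hands you an explicit basis in one step, while yours avoids any choice of pivot coordinates and isolates the reusable fact that minimal-support eigenfunctions always span the eigenspace (which, to be fair, also follows immediately from the paper's construction). The one step worth writing out fully if you formalize this is the termination of the descent producing $h^*$: each failure of minimality yields a nonzero eigenfunction with strictly smaller, nonempty support, so the process must stop at a minimal-support function contained in $\supp(g)$.
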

\begin{proof}
	We show for any eigenspace $E$ there exisits a basis consisting of functions with minimal support. First choose a basis $\{f_{i}:=(y_{i1}, y_{i2}\ldots, y_{in})^T:\,i=1,\ldots r \}$ of $E$ where $r=\dim E$. Consider the following $r\times n$ matrix
	\[M_0=\left(
	\begin{array}{cccc}
	y_{11} & y_{12} & \cdots &  y_{1n} \\
	y_{21} & y_{22} &\cdots &  y_{2n} \\
		\vdots & \vdots& \ddots & \vdots\\
	y_{r1}& y_{r2} &\cdots & y_{rn} \\
	\end{array}
	\right).\]
Without loss of generality, we can assume that $M_0$ can be transformed via elementary row transformation into the following form
	\[\left(
\begin{array}{ccccccc}
     1  & 0 & \cdots &0&z_{1,r+1}& \ldots&z_{1n} \\
	0 & 1 &\cdots & 0&z_{2,r+1}& \ldots&z_{2n} \\
	\vdots & \vdots& \ddots & \vdots&\vdots&\vdots&\vdots\\
	0& 0 &\cdots &1&z_{r,r+1}&\ldots& z_{rn} \\
\end{array}
\right).\]
Set $\eta_i:=(\delta_{i1}, \ldots., \delta_{ir}, z_{i,r+1}, \ldots, z_{in})^T,\, i=1,\ldots,\,r$. Then $\{\eta_i,\, i=1,\ldots,\,r\}$ is a basis of the eigenspace $E$. Next we show each $\eta_i$ has minimal support by contradiction.
Suppose that we have a function $f=(x_1,\ldots x_n)^T\in E$ with $\supp(f)\varsubsetneq\supp(\eta_i)$. This implies that $x_j=0,$ for $j=1,\ldots ,r, j\neq i$. If $x_i=0$, then $f\not\in E$. If, otherwise, $x_i\neq 0$, then $f$ is a multiple of $\eta_i$. Contradiction.
\end{proof}

A signed graph $\Gamma=(G,\sigma)$ is a graph $G=(V,E)$ with a signature $\sigma: E\to \{+1,-1\}$, where $V$ is the vertex set and $E$ is the edge set.  We say two vertices $x,y\in V$ are connected by an edge if $\{x,y\}\in E$ and write $x\sim y$. We denote by $d_x:=\sum_{y:y\sim x}1$ the vertex degree of $x\in V$. For each edge $\{x,y\}\in E$, we write $\sigma_{xy}=\sigma(\{x,y\})$ for short. In this article, we concern the induced signed graphs $\Gamma=(G,\sigma)$  of symmetric matrices, for which the graphs $G$ are always undirected, finite, and simple.

\begin{definition}
	Given an $n\times n$ symmetric matrix $M=(M_{ij})$, we define the \emph{induced signed graph} $\Gamma=(G,\sigma)$ of $M$ as follows: The underlying graph $G=(V,E)$ is given by \[V:=\{ x_i \}_{i=1}^n\,\,\text{ and }\,\,E:=\{ \{x_i,x_j\}: M_{ij}\neq 0 \,\,\text{and}\,\, i\neq j \},\] and the signature $\sigma: E\to \{+1,-1\}$ is defined via
\[\sigma_{x_ix_j}:=-\frac{M_{ij}}{|M_{ij}|}=\left\{
                                            \begin{array}{ll}
                                              +1, & \hbox{if $M_{ij}<0$;} \\
                                              -1, & \hbox{if $M_{ij}>0$,}
                                            \end{array}
                                          \right.
\]
for each edge $\{x_i,x_j\}\in E$.
\end{definition}

By the above definition, the induced signed graph of any generalized Laplacian always has an all-positive signature.

\begin{definition}
 Given a signed graph $\Gamma=(G,\sigma)$. We say a symmetric matrix $M$ is \emph{compatible} with $\Gamma$ if the induced signed graph of $M$ coincides with $\Gamma$.
\end{definition}

In a signed graph $\Gamma=(G,\sigma)$, the sign of a cycle or a path  in $G$ is defined as the product of the signs of edges in it. The following concepts of balance and antibalance are introduced by Harary \cite{Harary55,Harary57}.

\begin{definition}[Harary]
	A signed graph $\Gamma=(G,\sigma)$ is called balanced if the sign of every cycle in G is positive. It is called antibalanced if the sign of every odd cycle is negative and the sign of every even cycle is positive.
\end{definition}
Notice that a signed graph $\Gamma=(G,\sigma)$ is antibalanced if and only if its negation $-\Gamma:=(G,-\sigma)$ is balanced.

\begin{definition}[Switching]
	A function $\tau:V\to \{+1,-1\}$ is called a switching function. Switching the signature of $\Gamma=(G,\sigma)$ by $\tau$ refers to the operation of changing $\sigma$ to be $\sigma^\tau $ where
	\[\sigma^{\tau}_{xy}:=\tau(x)\sigma_{xy}\tau(y)\]
	for any $\{x,y\}\in E$.
\end{definition}
Switching the signature $\sigma$ by $\tau$ is simply reversing the signs of all edges connecting $S:=\{x\in V: \tau(x)=+1\}$ and its complement while keeping the signs of other edges unchanged.
\begin{definition}
	Let $G=(V,E)$ be a graph. Two signatures $\sigma: E\to \{+1.-1\}$ and $\sigma':E\to \{+1,-1\}$ are called to be switching equivalent if there exists a switching function $\tau$ such that $\sigma'=\sigma^\tau$.
\end{definition}
Notice that the sign of a cycle is invariant under switching operations. The following characterization lemma using switching is due to Zaslavsky \cite[Corollary 3.3]{Zaslavsky82}.
\begin{lemma}[Zaslavsky's switching lemma]
	A signed graph $\Gamma=(G,\sigma)$ is balanced if and only if $\sigma$ is switching equivalent to the all-positive signature, and it is antibalanced if and only if $\sigma$ is switching equivalent to the all-negative signature.
\end{lemma}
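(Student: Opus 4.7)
The plan is to prove each of the two equivalences by treating one direction as immediate from switching invariance of cycle signs, and the other by an explicit construction of a switching function via a spanning tree.

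For the easy direction, I would first observe that for any closed walk $x_0, x_1, \ldots, x_k = x_0$, the product $\prod_{i=1}^k \sigma^\tau_{x_{i-1}x_i} = \prod_{i=1}^k \tau(x_{i-1})\sigma_{x_{i-1}x_i}\tau(x_i)$ telescopes, because each vertex $x_i$ appears twice (so $\tau(x_i)^2 = 1$); this is the earlier-remarked fact that cycle signs are switching invariant. Hence if $\sigma$ is switching equivalent to the all-positive signature, every cycle has sign $+1$, so $\Gamma$ is balanced; and if $\sigma$ is switching equivalent to the all-negative signature, a cycle of length $\ell$ has sign $(-1)^\ell$, so $\Gamma$ is antibalanced.

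For the nontrivial direction in the balanced case, assume $\Gamma$ is balanced. Working component by component, I choose a spanning tree $T$ with root $x_0$ in a connected component, and for every vertex $x$ of that component let $P(x)$ be the unique $x_0$-to-$x$ path in $T$. I define
\[
\tau(x) := \prod_{e \in P(x)} \sigma_e, \qquad \tau(x_0) := +1,
\]
and extend $\tau$ to the other components similarly. I claim $\sigma^\tau_{xy} = \tau(x)\sigma_{xy}\tau(y) = +1$ for every edge $\{x,y\}$. For a tree edge this is direct from the definition. For a non-tree edge $\{x,y\}$, the concatenation of $P(x)$, the edge $\{x,y\}$, and the reverse of $P(y)$ is a closed walk whose edge-sign product is precisely $\tau(x)\sigma_{xy}\tau(y)$; after canceling the shared initial portion of $P(x)$ and $P(y)$ (each repeated edge contributes $\sigma_e^2 = 1$) this walk reduces to a simple cycle, whose sign equals $+1$ by balance. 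So $\sigma^\tau \equiv +1$ and switching equivalence to the all-positive signature follows.

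Finally the antibalanced statement reduces immediately to the balanced one: by definition $\Gamma = (G,\sigma)$ is antibalanced iff $-\Gamma = (G,-\sigma)$ is balanced, so by the balanced case there is $\tau$ with $(-\sigma)^\tau \equiv +1$, equivalently $\sigma^\tau \equiv -1$. The main obstacle to watch out for is the non-tree-edge step: one must verify that the closed walk formed from $P(x), \{x,y\}, P(y)^{-1}$ has the same sign as the associated simple cycle, which is the one place balance is actually used.
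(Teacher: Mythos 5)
Your proof is correct and complete; the paper itself does not prove this lemma but simply cites Zaslavsky, and your argument is the standard one. In particular, your spanning-tree switching function $\tau(x)=\prod_{e\in P(x)}\sigma_e$ is exactly the construction the paper uses to prove the adjacent Lemma \ref{lemma:2.8}, and your handling of the non-tree edges via fundamental cycles (the one place balance is used) and the reduction of antibalance to balance of $-\Gamma$ are both sound.
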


We can always switch the signs of all edges in a spanning tree of a signed graph to be positive \cite[Lemma 3.1]{Zaslavsky82}.
\begin{lemma}\label{lemma:2.8}
	Let $\Gamma=(G,\sigma)$ be a connected signed graph. Then for any spanning tree $T$ of $G$, there exists a switching function $\tau$ such that $\sigma$ can be switched by $\tau$ to be positive on each edge of $T$.
\end{lemma}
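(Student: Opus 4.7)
The plan is to construct $\tau$ explicitly by propagating a choice of sign outward along the spanning tree from a root vertex, using the fact that paths in a tree are unique.

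First I would pick any vertex $r \in V$ and set $\tau(r) := +1$. Since $\Gamma$ is connected and $T$ is a spanning tree, every other vertex $v \in V$ is joined to $r$ by a unique path in $T$, say $r = v_0, v_1, \ldots, v_k = v$. I would then define
\[
\tau(v) := \prod_{i=0}^{k-1} \sigma_{v_i v_{i+1}}.
\]
Because $T$ is a tree, this definition is unambiguous: there is no choice of path to reconcile. This gives a well-defined switching function $\tau : V \to \{+1,-1\}$.

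Next I would check that $\sigma^\tau$ is positive on every edge of $T$. For an arbitrary tree edge $\{x,y\} \in T$, let $r = v_0, v_1, \ldots, v_k = x$ be the unique tree path from $r$ to $x$; then $r = v_0, v_1, \ldots, v_k, y$ is the unique tree path from $r$ to $y$ (here I use that $T$ is acyclic, so appending the edge $\{x,y\}$ to a path from $r$ to $x$ cannot create a repeated vertex). By the definition of $\tau$,
\[
\tau(y) = \tau(x)\cdot \sigma_{xy},
\]
so that $\sigma^\tau_{xy} = \tau(x)\sigma_{xy}\tau(y) = \tau(x)^2 \sigma_{xy}^2 = +1$, since both $\tau(x)$ and $\sigma_{xy}$ take values in $\{+1,-1\}$. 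Hence $\sigma^\tau \equiv +1$ on $E(T)$.

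I do not anticipate a genuine obstacle here: the key point is simply that a spanning tree supplies a unique path between any two vertices, which makes the inductive definition of $\tau$ consistent. The only conceptual care needed is to note that the lemma makes no claim about the signs of non-tree edges under $\sigma^\tau$, which is good, because those signs are forced by the signs of the fundamental cycles and cannot in general be made positive simultaneously.
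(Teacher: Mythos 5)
Your proof is correct and is essentially identical to the paper's: both define $\tau(x_0)=+1$ at a root and set $\tau(v)$ to be the sign of the unique tree path from the root to $v$. You additionally write out the verification that $\sigma^\tau_{xy}=\tau(x)\sigma_{xy}\tau(y)=+1$ on tree edges, which the paper leaves implicit.
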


\begin{proof}
 Select one vertex $x_0\in V$. The switching function constructed below fulfills the requirement:
\[ \tau=\left\{
          \begin{array}{ll}
            +1, & \hbox{if $x=x_0$;} \\
            \text{the sign of the unique path connecting } x \text{ and } x_0 \,\text{in }\, T, & \hbox{if $x\neq x_0$.}
          \end{array}
        \right.
 \]

\end{proof}

Let $\Gamma=(G,\sigma)$ be the induced signed graph of a symmetric matrix $M$. Consider a switching function $\tau: V\to \{+1,-1\} $. Then the symmetric matrix \[M^{\tau}:=D(\tau)MD(\tau),\] where $D(\tau)$ is the diagonal matrix of $\tau$, shares the same spectrum with $M$. The induced signed graph of $M^\tau$ is the switched graph $\Gamma^\tau=(G, \sigma^\tau)$. Indeed, a function $f_k$ is an eigenfunction of $M$ such that $Mf_k=\lambda_kf_k$ if and only if $\tau f_k$ is an eigenfunction of $M^\tau$ such that $M^\tau (\tau f_k)=\lambda_k (\tau f_k)$.


\section{Strong and weak nodal domains on signed graphs}\label{section:concepts}
In this section, we present the concepts of strong and weak nodal domains on signed graphs in detail.

\begin{definition}[Strong nodal domain walks]\label{def:strong}
		Let $\Gamma=(G,\sigma)$ be a signed graph where $G=(V,E)$ and $f: V\to \mathbb{R}$ be a function. A walk $\{ x_k \}_{k=1}^n$, $n\geq 2$ is called a \emph{strong nodal domain walk} of $f$ (an S-walk for short) if $f(x_k)\sigma_{x_kx_{k+1}} f(x_{k+1})>0$ for each $k=1,2,\ldots,n-1$.
\end{definition}
Notice that $f(x_k)\neq 0$ for any vertex $x_k$ in an S-walk of $f$. In contrast, we allow zeros in the following concept of weak nodal domain walks.
\begin{definition}[Weak nodal domain walks]\label{def:weak}
		Let $\Gamma=(G,\sigma)$ be a signed graph where $G=(V,E)$ and $f: V\to \mathbb{R}$ be a function. A walk $\{ x_k \}_{k=1}^n$, $n\geq 2$ is called a \emph{weak nodal domain walk} of $f$ (a W-walk for short) if for any two consecutive non-zeros $x_i$ and $x_j$ of $f$, i.e., $f(x_i)\neq 0$, $f(x_j)\neq 0$, and $f(x_{\ell})=0$ for any $i<\ell<j$, it holds that
\[f(x_{i}) \sigma_{x_{i}x_{i+1}}\cdots\sigma_{x_{j-1}x_{j}}f(x_j)>0.\]
\end{definition}
We remark that every walk containing at most $1$ non-zeros of $f$ is a $W$-walk.

Using the above two types of walks, we introduce the following two equivalence relations on the set of non-zeros of a function $f$.
\begin{definition}
 Let $\Gamma=(G,\sigma)$ be a signed graph where $G=(V,E)$ and $f: V\to \mathbb{R}$ be a function. Let $\Omega=\{ v\in V:f(v)\neq 0 \}$ be the set of non-zeros of $f$.
\begin{itemize}
  \item [(i)] We define a relation $R_S$ on $\Omega$ as follows: For any $x,y\in \Omega$, $(x,y)\in R_S$ if and only if $x=y$ or there exists an S-walk connecting $x$ and $y$.
  \item [(ii)] We define a relation $R_W$ on $\Omega$ as follows: For any $x,y\in \Omega$, $(x,y)\in R_W$ if and only if $x=y$ or there exists an W-walk connecting $x$ and $y$.
\end{itemize}
\end{definition}
It is direct to check that both $R_S$ and $R_W$ are equivalence relations.

\begin{definition}[Strong and weak nodal domains]\label{def:nodaldomain}
Let $\Gamma=(G,\sigma)$ be a signed graph where $G=(V,E)$ and $f: V\to \mathbb{R}$ be a function. Let $\Omega=\{ v\in V:f(v)\neq 0 \}$ be the set of non-zeros of $f$.
\begin{itemize}
  \item [(i)] We denote by $\{S_i\}_{i=1}^p$ the equivalence classes of the relation $R_S$ on $\Omega$. We call the induced subgraph of each $S_i$ a \emph{strong nodal domain} of the function $f$.
 We denote the number $p$ of strong nodal domains of $f$ by $\mathfrak{S}(f)$.
  \item [(ii)]We denote by $\{W_i\}_{i=1}^q$ the equivalence classes of the relation $R_W$ on $\Omega$. We call the induced subgraph of each set \[W_i^0:=W_i\cup\{x\in V: \text{there exists a W-walk from $x$ to some vertex in $W_i$}\}\] a \emph{weak nodal domain} of the function $f$. We denote the number $q$ of weak nodal domains of $f$ by $\mathfrak{W}(f)$.
\end{itemize}
\end{definition}
Notice that $W_i^0$ is obtained from $W_i$ by absorbing the zeros around it.
\begin{remark}
 \begin{itemize}
   \item [(i)] From the definition, we have $\mathfrak{S}(f)=\mathfrak{W}(f)=0$ if $f$ is identically zero.
   \item [(ii)] We observe that both strong and weak nodal domains of a function are connected.
   \item [(iii)] For any two weak nodal domains $D_i$ and $D_j$ of $f$, if $x\in D_i\cap D_j$, then $x$ must be a zero, i.e., $f(x)=0$.
 \end{itemize}
\end{remark}

For the number of strong nodal domains $\mathfrak{S}(f)$ of a function $f$, we have the following observation.
\begin{lemma}\label{lemma:strong}
Let $\Gamma=(G,\sigma)$ be a signed graph with $G=(V,E)$.
 For a function $f:V\to \mathbb{R}$ and the set of non-zeros $\Omega=\{x\in V: f(x)\neq 0\}$, consider the subgraph $S=(\Omega, E(S))$  where
 \[E(S):=\{\{x,y\}\in E: f(x)\sigma_{xy}f(y)>0\}.\]
  Let $T_S=(\Omega, E(T_S))$ be a spanning forest of $S$. Then we have
\[\mathfrak{S}(f)=|V|-z-|E(T_S)|,\]
where $z=|V\setminus \Omega|$ is the number of zeros of $f$.
\end{lemma}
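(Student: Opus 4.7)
The plan is to reduce the statement to the elementary fact that in any graph, the number of connected components equals the number of vertices minus the number of edges in a spanning forest. The bridge between Definition \ref{def:nodaldomain} and this combinatorial identity is the observation that the strong nodal domains of $f$ are precisely the connected components of the auxiliary subgraph $S=(\Omega,E(S))$.

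First I would unpack the definition of an S-walk. Given a walk $\{x_k\}_{k=1}^n$ in $G$, the defining condition $f(x_k)\sigma_{x_kx_{k+1}}f(x_{k+1})>0$ for each consecutive pair is exactly the statement that every edge $\{x_k,x_{k+1}\}$ of the walk lies in $E(S)$. In particular, every S-walk is a walk in the subgraph $S$, and conversely every walk in $S$ is automatically an S-walk (since all its edges are in $E(S)$, forcing all its vertices to lie in $\Omega$). Therefore, for any $x,y\in\Omega$, we have $(x,y)\in R_S$ if and only if $x$ and $y$ lie in the same connected component of $S$.

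Consequently, the equivalence classes $\{S_i\}_{i=1}^p$ are in bijective correspondence with the connected components of $S$, so
\[
\mathfrak{S}(f)=p=\#\{\text{connected components of } S\}.
\]
Now I would invoke the standard identity from elementary graph theory: for any finite graph $H=(V(H),E(H))$ with spanning forest $F$, the number of connected components of $H$ equals $|V(H)|-|E(F)|$. Applied to $H=S$ with $V(H)=\Omega$ and $F=T_S$, this yields
\[
\mathfrak{S}(f)=|\Omega|-|E(T_S)|=(|V|-z)-|E(T_S)|,
\]
which is the claimed formula.

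I do not expect any genuine obstacle here; the only subtlety to verify carefully is the equivalence between S-walks and walks in $S$, and in particular that isolated non-zero vertices of $S$ (which have no incident edge in $E(S)$) are correctly counted as singleton equivalence classes of $R_S$ — this is handled by the clause $x=y$ in the definition of $R_S$, and matches the spanning-forest count since an isolated vertex of $S$ contributes $1$ to $|\Omega|$ and $0$ to $|E(T_S)|$.
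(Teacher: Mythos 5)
Your proof is correct and follows essentially the same route as the paper's: both identify the strong nodal domains with the connected components of the auxiliary subgraph $S$ and then apply the standard vertex-minus-forest-edges count. You simply spell out in more detail the equivalence between S-walks and walks in $S$, which the paper takes as immediate from the definitions.
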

\begin{proof}
By definition, the number $\mathfrak{S}(f)$ is the number of connected components of the subgraph $S$ or that of its spanning forest $T_S$. Therefore, we have
\[\mathfrak{S}(f)=|\Omega|-|E(T_S)|=|V|-z-|E(T_S)|.\]
\end{proof}

Next, we illustrate our concept by an example.
\begin{example}\label{ex:fish}
	We consider the signed graph $\Gamma=(G,\sigma)$ given in Figure \ref{fig:product_scheme} and the symmetric matrix
 \[M=\left(
\begin{array}{cccccc}
	 0 & -1 & 0 & -1 & 0 & 0 \\
	-1 &  0 &-1 &  1 & 0 & 0 \\
     0 & -1 & 0 & -1 &-1 &-1 \\
	-1 &  1 &-1 &  0 & 0 & 0 \\
	 0 &  0 &-1 &  0 & 0 & 1 \\
	 0 &  0 &-1 &  0 & 1 & 0 \\
\end{array}
\right),\]
which is compatible with $\Gamma$.
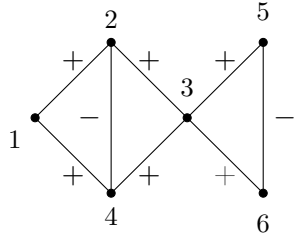
\begin{figure}[!htp]
	\centering
	\tikzset{vertex/.style={circle, draw, fill=black!20, inner sep=0pt, minimum width=3pt}}
	\begin{tikzpicture}[scale=1.0]
		\draw (1,1) -- (0,0) node[midway, above, black]{$+$}
		-- (1,-1) node[midway, below, black]{$+$};
		\draw (2,0) -- (1,-1) node[midway,below, black]{$+$}
		-- (1,1) node[midway,left, black]{$-$}
		-- (2,0) node[midway, above, black]{$+$} ;
		\draw (2,0) -- (3,-1)node[midway,below,black]{+} -- (3,1)node[midway,right,black]{$-$} -- (2,0)node[midway,above,black]{$+$};
		
		\node at (0,0) [vertex, label={[label distance=0mm]225: \small $1$}, fill=black] {};
		\node at (1,-1) [vertex, label={[label distance=0mm]270: \small $4$} ,fill=black] {};
		\node at (1,1) [vertex, label={[label distance=0mm]90: \small $2$} ,fill=black] {};
		\node at (2,0) [vertex, label={[label distance=1mm]90: \small $3$} ,fill=black] {};
		\node at (3,1) [vertex, label={[label distance=1mm]90: \small $5$} ,fill=black] {};
		\node at (3,-1) [vertex, label={[label distance=1mm]270: \small $6$} ,fill=black] {};
		
	\end{tikzpicture}
	\caption{$\Gamma=(G,\sigma)$.}
	
	\label{fig:product_scheme}
\end{figure}
 By numerical computation, we obtain the eigenvalues of $M$ listed below:
\[\lambda_{1}\approx-1.84\leq \lambda_{2}=\lambda_{3}=-1\leq \lambda_{4}\approx-0.51\leq\lambda_{5}\approx1.51\leq\lambda_{6}\approx2.84.\]
The following are a system of corresponding eigenfunctions:
\begin{align*}
f_1&\approx(1.76,1.62,2.84,1.62,1,1)^T,\\
f_2&=(0,-1,0,1,0,0)^T,\\
f_3&=(0,0,0,0,-1,1)^T,\\
f_4&\approx(-2.44,-0.62,1.51,-0.62,1,1)^T,\\
f_5&\approx(0.82,-0.62,-0.51,-0.62,1,1)^T,\\
f_6&\approx(-1.14,1.62,-1.84,1.62,1,1)^T.
\end{align*}
We list the strong and weak nodal domains of each eigenfunction in Table \ref{t}. Notice that we only provide vertex subsets. The strong and weak nodal domains are the induced subgraphs of those vertex subsets in Table \ref{t}.
\begin{table}[!ht]
	\begin{center}
		\begin{tabular}{c|c|c} 
			\textbf{Eigenfunction} & \textbf{Strong nadal domain} & \textbf{Weak nodal domain}\\
		
			\hline
			$f_1$ & $\{1,2,3,4,5,6\}$& $\{1,2,3,4,5,6\}$\\
			$f_2$ & $\{2,4\}$& $\{1,2,3,4,5,6\}$\\
			$f_3$ & $\{5,6\}$ & $\{1,2,3,4,5,6\}$\\
			$f_4$&$\{1,2,4\},\{3,5,6\}$&$\{1,2,4\},\{3,5,6\}$\\
			$f_5$&$\{1\},\{2,3,4\},\{5\},\{6\}$&$\{1\},\{2,3,4\},\{5\},\{6\}$\\
			$f_6$&$\{1\},\{2\},\{3\},\{4\},\{5\},\{6\}$&$\{1\},\{2\},\{3\},\{4\},\{5\},\{6\}$\\
		\end{tabular}
\caption{Strong and weak nodal domains}\label{t}
	\end{center}
\end{table}
We also illustrate the eigenfunction $f_6$ in Figure \ref{fig:2}. There we see none of the edges is an $S$-walk.

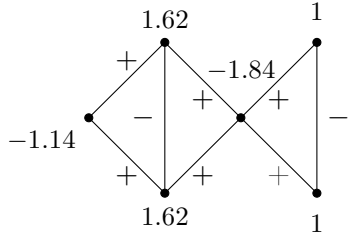
\begin{figure}[!htp]
	\centering
	\tikzset{vertex/.style={circle, draw, fill=black!20, inner sep=0pt, minimum width=3pt}}
	\begin{tikzpicture}[scale=1.0]
		\draw (1,1) -- (0,0) node[midway, above, black]{$+$}
		-- (1,-1) node[midway, below, black]{$+$};
		\draw (2,0) -- (1,-1) node[midway,below, black]{$+$}
		-- (1,1) node[midway,left, black]{$-$}
		-- (2,0) node[midway, below, black]{$+$} ;
		\draw (2,0) -- (3,-1)node[midway,below,black]{+} -- (3,1)node[midway,right,black]{$-$} -- (2,0)node[midway,below,black]{$+$};
		
		\node at (0,0) [vertex, label={[label distance=0mm]225: \small $-1.14$}, fill=black] {};
		\node at (1,-1) [vertex, label={[label distance=0mm]270: \small $1.62$} ,fill=black] {};
		\node at (1,1) [vertex, label={[label distance=0mm]90: \small $1.62$} ,fill=black] {};
		\node at (2,0) [vertex, label={[label distance=3mm]90: \small $-1.84$} ,fill=black] {};
		\node at (3,1) [vertex, label={[label distance=1mm]90: \small $1$} ,fill=black] {};
		\node at (3,-1) [vertex, label={[label distance=1mm]270: \small $1$} ,fill=black] {};
		
	\end{tikzpicture}
	\caption{The function $f_6$ on $\Gamma=(G,\sigma)$.}\label{fig:2}
	
	\label{fig:product_scheme2}
\end{figure}

\end{example}

\subsection{Switching invariance}
Let us recall the definition of strong and weak nodal domains of a function on an unsigned graph from \cite[Definitions 1 and 2]{DGLS01}.
\begin{definition}[\cite{DGLS01}]\label{def:DGLS}
Let $G=(V,E)$ be a graph and $f: V\to \mathbb{R}$ be a function. A positive (negative) strong nodal domain of $f$ is a maximal connected induced subgraph of $G$ on vertices $v\in V$ with $f(v)>0$ ($f(v)<0$). A positive (negative) weak nodal domain of $f$ is a maximal connected induced subgraph of $G$ on vertices $v\in V$ with $f(v)\geq 0$ ($f(v)\leq 0$) that contains at least one nonzero vertex.
\end{definition}
\begin{remark}
Our Definition \ref{def:nodaldomain} of strong and weak nodal domains of a function on a signed graph $\Gamma=(G,\sigma)$ coincides with Definition \ref{def:DGLS} when $\sigma_{xy}=+1$ for any edge $\{x,y\}$.
\end{remark}

\begin{theorem}[Switching invariance]\label{Thm:switch invariant}
Let $f$ be a non-zero function on a signed graph $\Gamma=(G,\sigma)$. Let $\tau: V\to \{+1,-1\}$ be a switching function. Then an induced subgraph of $G$ is a strong (weak) nodal domain of the function $f$ on $\Gamma=(G,\sigma)$ if and only if it is a strong (weak) nodal domain of the function $\tau f$ on $\Gamma^\tau=(G,\sigma^\tau)$.
\end{theorem}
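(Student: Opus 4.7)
The plan is to reduce everything to the elementary observation that switching is a pointwise sign change and $\tau(x)^2=1$ for every $x\in V$. Since the underlying graph $G$ is unchanged and $\tau(x)\neq 0$, we immediately have $\supp(\tau f)=\supp(f)$, so the set $\Omega$ of non-zero vertices is identical in both settings. Both Definition \ref{def:strong} and Definition \ref{def:weak} are phrased entirely in terms of the signs of certain products depending on $f$ and $\sigma$ along walks, and Definition \ref{def:nodaldomain} is built from the equivalence relations $R_S, R_W$ induced by these walks; so it suffices to prove that the notion of an S-walk (respectively W-walk) is invariant under simultaneous replacement of $(f,\sigma)$ by $(\tau f,\sigma^\tau)$. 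Then the equivalence classes $\{S_i\}$ and $\{W_i\}$ coincide, and consequently the absorbed sets $W_i^0$ also coincide, proving the theorem.

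The core computation is a telescoping of $\tau$-values. First I would verify S-walk invariance: for any edge $\{x,y\}$,
\begin{equation*}
(\tau f)(x)\,\sigma^\tau_{xy}\,(\tau f)(y)=\tau(x)f(x)\cdot\tau(x)\sigma_{xy}\tau(y)\cdot\tau(y)f(y)=\tau(x)^2\tau(y)^2\,f(x)\sigma_{xy}f(y)=f(x)\sigma_{xy}f(y),
\end{equation*}
so the defining positivity condition in Definition \ref{def:strong} holds for $(\tau f,\sigma^\tau)$ along a walk $\{x_k\}_{k=1}^n$ if and only if it holds for $(f,\sigma)$. For W-walks, the same telescoping handles a segment between consecutive non-zeros $x_i,x_j$ with zeros in between: since the intermediate $\tau(x_\ell)^2=1$ cancel in pairs, one obtains
\begin{equation*}
(\tau f)(x_i)\,\sigma^\tau_{x_ix_{i+1}}\cdots\sigma^\tau_{x_{j-1}x_j}\,(\tau f)(x_j)=\tau(x_i)^2\tau(x_j)^2\,f(x_i)\sigma_{x_ix_{i+1}}\cdots\sigma_{x_{j-1}x_j}f(x_j),
\end{equation*}
which has the same sign as the original product, matching Definition \ref{def:weak}.

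From these two computations, the relations $R_S$ and $R_W$ on $\Omega$ are literally the same for $(f,\Gamma)$ and $(\tau f,\Gamma^\tau)$, hence so are their equivalence classes $\{S_i\}_{i=1}^p$ and $\{W_i\}_{i=1}^q$. For the weak case I would then note that the absorption step defining $W_i^0$ only uses the existence of W-walks from zero vertices into $W_i$, which is again preserved by the same telescoping identity; so $W_i^0$ is identical in both settings. Since the underlying graph $G$ does not change under switching, the induced subgraphs carried by these vertex sets are identical as well, yielding the desired equivalence for both strong and weak nodal domains.

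I do not foresee a genuine obstacle here: the whole argument is formal once the $\tau(x)^2=1$ cancellation is identified. The only care needed is in the W-walk case, where one must check that the "consecutive non-zeros" structure of the walk (Definition \ref{def:weak}) is itself unaffected by the transformation, which is immediate because $\supp(\tau f)=\supp(f)$ so the positions of zeros along any walk are the same in both settings.
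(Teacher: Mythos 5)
Your proposal is correct and follows essentially the same route as the paper: the paper's entire proof is the single telescoping identity $f(x_0)\sigma_{x_0x_1}\cdots\sigma_{x_{m-1}x_m}f(x_m)=(\tau f)(x_0)\sigma^\tau_{x_0x_1}\cdots\sigma^\tau_{x_{m-1}x_m}(\tau f)(x_m)$ for any walk, which is exactly your $\tau(x)^2=1$ cancellation. Your additional bookkeeping (that $\supp(\tau f)=\supp(f)$, that the equivalence classes and absorbed sets $W_i^0$ therefore coincide) is just a careful spelling-out of what the paper leaves implicit.
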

\begin{proof}
The theorem follows directly from the following observation:
For any walk $x_0\sim x_1\cdots\sim x_m,\,m\geq 1$, it holds that
\[f(x_0)\sigma_{x_0x_1}\cdots\sigma_{x_{m-1}x_m}f(x_m)=(\tau f)(x_0)\sigma^\tau_{x_0x_1}\cdots\sigma^\tau_{x_{m-1}x_m}(\tau f)(x_m).\]
\end{proof}

\begin{corollary}
 Let $f$ be a non-zero function on a balanced signed graph $\Gamma=(G,\sigma)$. Let $\tau: V\to \{+1,-1\}$ be the switching function such that $\sigma^\tau_{xy}=+1$ for any edge $\{x,y\}$. Then an induced subgraph of $G$ is a strong (weak) nodal domain of the function $f$ on $\Gamma=(G,\sigma)$ if and only if it is a strong (weak) nodal domain of the function $\tau f$ on the graph $G$ in the sense of Definition \ref{def:DGLS}.
\end{corollary}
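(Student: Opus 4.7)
The plan is to chain together two facts that have already appeared in this section. First, because $\Gamma=(G,\sigma)$ is balanced, Zaslavsky's switching lemma guarantees the existence of a switching function $\tau$ such that the switched signature $\sigma^\tau$ is identically $+1$ on every edge of $G$; this is exactly the $\tau$ specified in the statement. Second, once we have such a $\tau$, the switched signed graph $\Gamma^\tau=(G,\sigma^\tau)$ is literally an unsigned graph equipped with the all-positive signature.

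Next, I would invoke the switching invariance theorem (Theorem \ref{Thm:switch invariant}): an induced subgraph of $G$ is a strong (resp.\ weak) nodal domain of $f$ on $\Gamma=(G,\sigma)$ if and only if it is a strong (resp.\ weak) nodal domain of $\tau f$ on $\Gamma^\tau=(G,\sigma^\tau)$, where the latter notion is the one from Definition \ref{def:nodaldomain} applied to the signed graph $\Gamma^\tau$.

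Finally, I would use the Remark immediately preceding Theorem \ref{Thm:switch invariant}, which asserts that Definition \ref{def:nodaldomain} of strong and weak nodal domains on $(G,\sigma^\tau)$ reduces to Definition \ref{def:DGLS} precisely when the signature is identically $+1$. Applying this observation to $\tau f$ on $\Gamma^\tau$ converts the conclusion of the switching invariance theorem into the desired statement that the induced subgraph is a strong (resp.\ weak) nodal domain of $\tau f$ on $G$ in the sense of Definition \ref{def:DGLS}.

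There is no real obstacle here beyond assembling these three facts in the correct order; the corollary is essentially a corollary of Theorem \ref{Thm:switch invariant} together with the preceding Remark, with the existence of the required switching function supplied by Zaslavsky's lemma from the balanced hypothesis. The only thing that deserves a one-line verification is that the $\tau$ produced by Zaslavsky's lemma is indeed the one described in the hypothesis (uniqueness of such $\tau$ up to an overall sign is irrelevant, since any such $\tau$ will work in the argument above).
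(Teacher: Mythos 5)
Your proposal is correct and follows exactly the route the paper intends: the corollary is stated without proof as an immediate consequence of Theorem \ref{Thm:switch invariant} combined with the remark that Definition \ref{def:nodaldomain} reduces to Definition \ref{def:DGLS} for the all-positive signature, with Zaslavsky's lemma supplying the existence of the required $\tau$ from balancedness.
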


\begin{remark}\label{rmk:Mohammadian}
Mohammadian \cite{Mohammadian16} has studied nodal domain theorems for symmetric matrices. Let $M$ be a symmetric matrix and $G=(V,E)$ be the induced graph. Let $f:V\to \mathbb{R}$ be a function. Mohammadian defines the following two subgraphs: One subgraph $\Gamma^{<}_{M,f}(G)$ has vertex set $\Omega:=\{x\in V: f(x)\neq 0\}$ and edge set
\[E^<:=\{\{x,y\}\in E: f(x)M_{xy}f(y)<0\}.\]
The other subgraph $\Gamma^{\leq}_{M,f}(G)$ has vertex set $V$ and edge set
\[E^\leq:=\{\{x,y\}\in E: f(x)M_{xy}f(y)\leq 0\}.\]
If the graph $G$ is connected, Mohammadian proves for the $k$-th eigenfunction $f_k$ of $M$ that
\[c(\Gamma^{\leq}_{M,f_k}(G))\leq k,\,\,\text{and}\,\,\,c(\Gamma^{<}_{M,f_k}(G))\leq k+r-1,\]
where $c(\cdot)$ is the number of connected components and $r$ is the multiplicity of the $k$-th eigenvalue.

Comparing with our definitions, we have
\[c(\Gamma^{\leq}_{M,f_k}(G))\leq \mathfrak{W}(f_k),\,\,\text{and}\,\,\,c(\Gamma^{<}_{M,f_k}(G))=\mathfrak{S}(f_k).\]
Therefore, Mohammadian has proved that $\mathfrak{S}(f_k)\leq k+r-1$ in our terminology.
Notice that for many cases, e.g., for the second Laplacian eigenfunction of a star graph, the strict inequality \[c(\Gamma^{\leq}_{M,f_k}(G))<\mathfrak{W}(f_k)\] holds.
\end{remark}

\subsection{Antibalance and duality}
When the signed graph $\Gamma=(G,\sigma)$ is balanced, it holds that $\mathfrak{W}(f_1)=\mathfrak{S}(f_1)=1$ for the first eigenfunction $f_1$ of any symmetric matrix compatible with $\Gamma$, by Theorem \ref{Thm:switch invariant} and Perron-Frobenius theorem. Conversely, if $\mathfrak{W}(f_1)=\mathfrak{S}(f_1)=1$, then $\Gamma$ is not necessarily balanced, see Example \ref{ex:fish}. However, for antibalanced case, we have the following result.

\begin{theorem}\label{Thm:n}
   Let $\Gamma=(G,\sigma)$ be a connected signed graph. For any $n\times n$ symmetric matrix $M$ compatible with $\Gamma$, let $\lambda_{n} $ be the largest eigenvalue and $f_n$ be an eigenfunction of $M$ corresponding to $\lambda_{n}$. Then $\Gamma=(G,\sigma)$ is antibalanced if and only if $\mathfrak{W}(f_n)=\mathfrak{S}(f_n)=n$. Moreover, when $\Gamma$ is antibalanced, $\lambda_n$ is simple.
\end{theorem}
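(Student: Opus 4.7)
The plan is to reduce both implications to the all-negative signature case via Zaslavsky's switching lemma and switching invariance of nodal domains (Theorem \ref{Thm:switch invariant}), and then invoke the Perron--Frobenius theorem.

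For the forward direction, assuming $\Gamma$ is antibalanced, I would invoke Zaslavsky's switching lemma to obtain a switching function $\tau\colon V\to\{+1,-1\}$ with $\sigma^\tau_{xy}=-1$ for every edge $\{x,y\}\in E$. The switched matrix $M^\tau=D(\tau)MD(\tau)$ is then compatible with $\Gamma^\tau$, so $M^\tau_{xy}>0$ on every edge and $M^\tau_{xy}=0$ on non-edges. Choosing a scalar $c$ large enough that $M^\tau+cI$ is entrywise non-negative with positive diagonal, the connectedness of $G$ makes $M^\tau+cI$ irreducible (and primitive), so the Perron--Frobenius theorem yields that its largest eigenvalue $\lambda_n+c$ is simple with a strictly positive eigenvector $g_n$. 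Hence $\lambda_n$ is a simple eigenvalue of $M^\tau$, and equivalently of $M$, and $f_n=\tau g_n$ up to scalar is nowhere zero. Now on $\Gamma^\tau$, every edge $\{x,y\}$ satisfies $g_n(x)\sigma^\tau_{xy}g_n(y)=-g_n(x)g_n(y)<0$, so no edge supports an S-walk and each vertex forms its own strong nodal domain, giving $\mathfrak{S}(g_n)=n$. Since $g_n$ has no zeros, every W-walk on $\Gamma^\tau$ is automatically an S-walk, so also $\mathfrak{W}(g_n)=n$. Theorem \ref{Thm:switch invariant} transfers both counts back to $f_n$ on $\Gamma$.

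For the converse, suppose $\mathfrak{W}(f_n)=\mathfrak{S}(f_n)=n$. Since strong nodal domains are equivalence classes on the support $\Omega=\{x\in V:f_n(x)\neq 0\}$, the bound $\mathfrak{S}(f_n)\leq|\Omega|\leq n$ forces $\Omega=V$, so $f_n$ is nowhere zero. Moreover, each vertex being a singleton equivalence class means that no edge lies in an S-walk, so $f_n(x)\sigma_{xy}f_n(y)<0$ for every $\{x,y\}\in E$. Setting $\tau(x):=\sgn(f_n(x))$ then yields $\sigma^\tau_{xy}=\tau(x)\sigma_{xy}\tau(y)=-1$ on every edge, so $\sigma$ is switching equivalent to the all-negative signature, and Zaslavsky's switching lemma concludes that $\Gamma$ is antibalanced.

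The only nontrivial step is the Perron--Frobenius application to the possibly sign-indefinite matrix $M^\tau$; this is handled by the standard shift $M^\tau\mapsto M^\tau+cI$, which preserves eigenspaces and irreducibility while making the matrix entrywise non-negative. Everything else follows directly from the definitions, Zaslavsky's switching lemma, and Theorem \ref{Thm:switch invariant}.
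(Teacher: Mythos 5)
Your proof is correct and follows essentially the same route as the paper's: switch to the all-negative signature via Zaslavsky's lemma, apply Perron--Frobenius to get a simple top eigenvalue with a nowhere-vanishing eigenvector, and for the converse use the switching function $\tau=\sgn(f_n)$ to exhibit $\sigma$ as switching equivalent to the all-negative signature. Your explicit shift $M^\tau\mapsto M^\tau+cI$ to make Perron--Frobenius applicable is a small point of added rigor that the paper leaves implicit, but it is not a different argument.
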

\begin{proof}
  We first assume that $\Gamma$ is antibalanced. By Theorem \ref{Thm:switch invariant}, we can further assume that each edge is negative and hence each off-diagonal entry of $M$ is nonnegative. Then we apply Perron-Frobenius theorem to derive that $\lambda_{n}$ is simple and $f_n$ is positive on all vertices. Therefore, each edge of $\Gamma$ is neither a $W$-walk nor an $S$-walk. We have $\mathfrak{W}(f_n)=\mathfrak{S}(f_n)=n$ by definition.

  Next, we assume that $\mathfrak{W}(f_n)=\mathfrak{S}(f_n)=n$. In particular, we know that $f_n$ is nonzero on all vertices. We consider the following switching function
  \[\tau(x)= \frac{f_n(x)}{|f(_nx)|},\,\,\text{for all}\,\,x\in V.\]
  Since $\mathfrak{S}(f_n)=n$, each edge of $\Gamma$ is not an $S$-walk, i.e., $\tau(x)\sigma_{xy}\tau(y)=-1$ for any $\{x,y\}\in E$. That is, we can switch $\sigma$ by $\tau$ such that $\sigma^\tau\equiv -1$. Hence, $\Gamma=(G,\sigma)$ is antibalanced.
\end{proof}
In Example \ref{ex:fish}, we have $\mathfrak{S}(f_6)=\mathfrak{W}(f_6)=6$. By Theorem \ref{Thm:n}, the singed graph in Figure \ref{t} is antibalanced. Of course, we can check the antibalancedness directly by observing that each cycle has a negative sign.

As a consequence of Theorem \ref{Thm:n}, we can derive a result due to Roth \cite{Roth89} directly.
\begin{theorem}\cite{Roth89}\label{Thm:Roth}
	Let $G=(V,E)$ be a connected bipartite graph and $M$ be a generalized Laplacian of $G$. Let $\lambda_{n} $ be the largest eigenvalue and $f_n$ be an eigenfunction of $M$ corresponding to $\lambda_{n}$. Then $\lambda_n$ is simple and we have $\mathfrak{W}(f_n)=\mathfrak{S}(f_n)=n$.
\end{theorem}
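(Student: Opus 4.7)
The plan is to derive this as a direct corollary of Theorem \ref{Thm:n} applied to the all-positive signature on the bipartite graph $G$.

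First I would observe that since $M$ is a generalized Laplacian of $G$, all off-diagonal entries $M_{ij}$ with $i\neq j$ and $\{x_i,x_j\}\in E$ are non-positive and in fact nonzero (by the definition of the induced graph of $M$, an edge is present precisely when the corresponding entry is nonzero). Hence the induced signed graph of $M$ is exactly $\Gamma = (G,\sigma)$ with $\sigma \equiv +1$, so that $M$ is compatible with $(G,+)$ in the sense of the earlier definition.

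Next I would verify that $(G,+)$ is antibalanced whenever $G$ is bipartite. Recall that antibalance requires every odd cycle to have negative sign and every even cycle to have positive sign. Since $G$ is bipartite, it contains no odd cycles, so that condition holds vacuously; and any cycle, being even, has sign equal to the product of its edge signs, namely $(+1)^{2k}=+1$. Thus every cycle of $(G,+)$ has positive sign of the required parity, and $(G,+)$ is antibalanced.

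With $\Gamma=(G,+)$ connected and antibalanced, Theorem \ref{Thm:n} applies directly to $M$: the largest eigenvalue $\lambda_n$ is simple and the corresponding eigenfunction $f_n$ satisfies $\mathfrak{W}(f_n)=\mathfrak{S}(f_n)=n$. This finishes the proof. There is no real obstacle here; the only thing to be careful about is not to confuse antibalance with the all-negative signature itself, but rather to recall that under Zaslavsky's switching lemma (Lemma in Section \ref{section:pre}) antibalance is characterized by switching equivalence to the all-negative signature, which is consistent with the cycle-sign verification above for bipartite $G$.
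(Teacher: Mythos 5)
Your proof is correct and follows exactly the same route as the paper: the induced signed graph of a generalized Laplacian on a bipartite $G$ is $(G,+)$, which is antibalanced since there are no odd cycles and every even cycle has positive sign, so Theorem \ref{Thm:n} applies directly. The extra care you take in verifying compatibility and antibalance is fine but adds nothing beyond the paper's two-line argument.
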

\begin{proof}
Any bipartite graph $G$ with an all-positive signature $\sigma$ is antibalanced. Then the facts that $\lambda_n$ is simple and $\mathfrak{W}(f_n)=\mathfrak{S}(f_n)=n$ follow from Theorem \ref{Thm:n}.
\end{proof}

It is a very useful philosophy to consider the strong nodal domains of a function $f$ on a signed graph $\Gamma=(G,\sigma)$ and that of $f$ on its negation $-\Gamma=(G,-\sigma)$. The latter can be considered a dual version of strong nodal domains of $f$. When the graph $G$ is a forest, we have the following identity.
\begin{theorem}\label{Thm:duality}
Let $\Gamma=(G,\sigma)$ be a signed graph where $G=(V,E)$ is a forest with $|V|=n$. For any function $f: V\to \mathbb{R}$,
we have
\[\mathfrak{S}(f)+\overline{\mathfrak{S}}(f)=n+c-2z+e_0.\]
where $\overline{\mathfrak{S}}(f)$ is the number of strong nodal domains of $f$ on the signed graph $-\Gamma=(G,-\sigma)$, $z=|\{x\in V: f(x)=0\}|$, $e_0=|\{\{x,y\}\in E: f(x)=0 \,\,\text{or}\,\,f(y)=0\}|$, and $c$ is the number of connected components of $G$.
\end{theorem}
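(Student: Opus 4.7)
The plan is to reduce everything to Lemma \ref{lemma:strong} and exploit the fact that, in a forest, every subgraph is again a forest, so each subgraph coincides with its own spanning forest. Concretely, let $\Omega=\{x\in V:f(x)\neq 0\}$ and consider the two subgraphs on $\Omega$ coming from Lemma \ref{lemma:strong}: $S=(\Omega,E(S))$ with $E(S)=\{\{x,y\}\in E:f(x)\sigma_{xy}f(y)>0\}$, and its ``dual'' $\overline{S}=(\Omega,E(\overline{S}))$ with $E(\overline{S})=\{\{x,y\}\in E:f(x)(-\sigma_{xy})f(y)>0\}=\{\{x,y\}\in E:f(x)\sigma_{xy}f(y)<0\}$. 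Note $\overline{S}$ is exactly the analogue of $S$ for the negation $-\Gamma$, so Lemma \ref{lemma:strong} applied on $-\Gamma$ computes $\overline{\mathfrak{S}}(f)$.

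Since $G$ is a forest, so is every subgraph, and hence $S$ and $\overline{S}$ are themselves forests. In particular, each is its own spanning forest, so Lemma \ref{lemma:strong} yields
\[
\mathfrak{S}(f)=(n-z)-|E(S)|,\qquad \overline{\mathfrak{S}}(f)=(n-z)-|E(\overline{S})|.
\]
Adding these,
\[
\mathfrak{S}(f)+\overline{\mathfrak{S}}(f)=2(n-z)-\bigl(|E(S)|+|E(\overline{S})|\bigr).
\]

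Next I would compute $|E(S)|+|E(\overline{S})|$ by partitioning the edge set of $G$. Every edge $\{x,y\}\in E$ either has at least one zero endpoint (these account for exactly $e_0$ edges by definition), or both endpoints lie in $\Omega$; in the latter case $f(x)\sigma_{xy}f(y)$ is a nonzero real, so $\{x,y\}$ belongs to exactly one of $E(S),E(\overline{S})$. Hence
\[
|E|=e_0+|E(S)|+|E(\overline{S})|.
\]
Because $G$ is a forest with $n$ vertices and $c$ components, $|E|=n-c$. Therefore $|E(S)|+|E(\overline{S})|=n-c-e_0$, and substituting back gives
\[
\mathfrak{S}(f)+\overline{\mathfrak{S}}(f)=2(n-z)-(n-c-e_0)=n+c-2z+e_0,
\]
as required.

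There is no real obstacle here; the only point that requires attention is the disjoint decomposition of $E$ and the implicit use of the forest hypothesis to identify each subgraph with its spanning forest. If one wanted to remove the forest hypothesis, the right-hand side would instead involve the cyclomatic numbers of $S$ and $\overline{S}$, and the clean identity would fail; this is why the theorem is stated only for forests.
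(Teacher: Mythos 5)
Your proof is correct and follows essentially the same route as the paper: both partition the edges of the forest into those with a zero endpoint (counted by $e_0$) and those that are $S$-walks for exactly one of $\Gamma$, $-\Gamma$, then apply Lemma \ref{lemma:strong} together with $|E|=n-c$. Your version is slightly more explicit than the paper's in noting that the subgraphs $S$ and $\overline{S}$ are themselves forests, which is the (implicit) reason the spanning-forest edge count in Lemma \ref{lemma:strong} equals $|E(S)|$ and $|E(\overline{S})|$.
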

\begin{proof}
For any edge $\{x,y\}\in E$ of the tree with $f(x)\neq 0$ and $f(y)\neq 0$, exactly one of the following two holds:
\begin{itemize}
  \item [(i)] $\{x,y\}$ is an $S$-walk of $f$ on $\Gamma=(G,\sigma)$;
  \item [(ii)]$\{x,y\}$ is an $S$-walk of $f$ on $-\Gamma=(G,-\sigma)$;
\end{itemize}
Let $p$ and $q$ be the numbers of edges which are $S$-walk of $f$ on $\Gamma$ and on $-\Gamma$, respectively. Then we have $p+q=n-c-e_0$.

By Lemma \ref{lemma:strong}, we obtain $\mathfrak{S}(f)=n-z-p$ and $\overline{\mathfrak{S}}(f)=n-z-q$.
%
Therefore, we have
\[\mathfrak{S}(f)+\overline{\mathfrak{S}}(f)=2n-2z-(p+q)=2n-2z-n+c+e_0=n-2z+c+e_0.\]
This finishes the proof.
\end{proof}
Theorem \ref{Thm:duality} will be employed to calculate eigenvalue multiplicities in Corollary \ref{cor:fiedler} and Corollary \ref{cor:multiplicitytilde} below.

\subsection{Basic properties of weak nodal domains}
We say two domains $D_i$ and $D_j$ are \emph{adjacent}, denoted by $D_i\sim D_j$, if there exist $x\in D_i$ and $y\in D_j$ such that $x\sim y$. By definition, we have the following proposition.
  \begin{proposition}\label{prop:connect}
  	Let $\{D_i\}_{i=1}^q$ be the weak nodal domains of a non-zero function $f$ on a signed graph $\Gamma=(G,\sigma)$. Let $G_D=(V_D, E_D)$ be the graph given by
\[V_D:=\{D_i\}_{i=1}^q,\,\,\text{and}\,\,E_D:=\{\{D_i,D_j\}: D_i\sim D_j\}.\]
If the graph $G$ is connected, so does $G_D$.
  \end{proposition}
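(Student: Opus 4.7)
The plan is to show that every vertex of $V$ lies in at least one weak nodal domain, and then promote any path in $G$ between two such domains into a walk in $G_D$.

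First I would verify the covering property: every $v \in V$ belongs to some $D_i$. Non-zero vertices are automatic, since each such vertex lies in a unique equivalence class $W_i$, hence in $W_i^0 \subseteq D_i$. For a zero vertex $x$, since $f$ is non-zero and $G$ is connected, there is a path in $G$ from $x$ to the set of non-zeros. Taking a \emph{shortest} such path $x = x_0, x_1, \ldots, x_m = y$ with $y$ a non-zero of $f$, by minimality all intermediate $x_1,\ldots,x_{m-1}$ are zeros. This path contains only one non-zero vertex, namely $y$, so the defining condition of a W-walk (a sign condition on pairs of consecutive non-zeros) is vacuous. Thus the path is a W-walk and $x$ lies in the weak nodal domain $W_i^0$ containing $y$.

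Next I would deduce the conclusion. Given two weak nodal domains $D_i, D_j$, pick representatives $u \in W_i \subseteq D_i$ and $v \in W_j \subseteq D_j$ (non-zeros exist in each weak nodal domain by definition of the equivalence classes $W_k$), and choose a path $u = x_0 \sim x_1 \sim \cdots \sim x_m = v$ in $G$, which exists by connectedness of $G$. By the covering property, for each $k$ we may pick a weak nodal domain $D_{i_k}$ with $x_k \in D_{i_k}$, taking $D_{i_0} = D_i$ and $D_{i_m} = D_j$. For each step, either $D_{i_k} = D_{i_{k+1}}$, or else $D_{i_k} \neq D_{i_{k+1}}$, in which case the edge $x_k \sim x_{k+1}$ in $G$ witnesses $D_{i_k} \sim D_{i_{k+1}}$ directly from the definition of adjacency in $G_D$. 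Removing repetitions yields a walk in $G_D$ from $D_i$ to $D_j$.

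The only subtle step is the covering claim, and the key observation that unlocks it is that the W-walk condition involves only \emph{consecutive non-zeros}, so a walk traversing only zeros (save for one endpoint) is automatically a W-walk. Once this is in hand, the rest of the argument is a straightforward transfer of paths from $G$ to $G_D$.
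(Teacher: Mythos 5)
Your proof is correct and follows essentially the same route as the paper's: both transfer a connecting walk in $G$ to a walk in $G_D$, using the fact that a walk containing at most one non-zero is automatically a W-walk so that zeros get absorbed into weak nodal domains. Your version is slightly more explicit, since you isolate and prove the covering property (every vertex lies in some $D_i$), whereas the paper leaves the analogous absorption claim (``$f(x_{i_0+1})\neq 0$'') unjustified.
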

\begin{proof}
Let $D$ and $D'$ be any two weak nodal domains. Choose two vertices $x$ and $x'$ such that $x\in D$ and $x'\in D'$. Since the graph $G$ is connected, there exists a walk $x=x_0\sim x_1\sim\cdots\sim x_m=x'$ connecting $x$ and $x'$. Set $i_0:=\max\{i: x_i\in D\}$. Then we have $f(x_{i_0+1})\neq 0$. Therefore, $x_{i_0+1}$ and $x$ belongs to different equivalent classes of the relation $R_W$, i.e., $x_{i_0+1}$ lies in a weak nodal domain $D_1\sim D$. Applying the above argument iteratively, we find a walk $D\sim D_1\sim\cdots\sim D'$ from $D$ to $D'$ in the graph $G_D$. That is, the graph $G_D$ is connected.
\end{proof}
For any zero vertex, we have the following observation.
\begin{proposition}\label{prop:1sphere}
Let $f$ be a non-zero function on a signed graph $\Gamma=(G,\sigma)$. Then for any three weak nodal domain $D_1, D_2, D_3$ of $f$ we have
$D_1\cap D_2\cap D_3=\emptyset$.
\end{proposition}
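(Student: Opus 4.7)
I will argue by contradiction. Suppose that $x \in D_1 \cap D_2 \cap D_3$. By Remark 3.6(iii) (applied to any two of the three domains), $f(x) = 0$, so $x$ is a zero of $f$.

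For each $i \in \{1, 2, 3\}$, since $x \in D_i = W_i^0$ but $x \notin W_i$, there is a W-walk from $x$ to some vertex $y_i \in W_i$. Let $u_i$ be the first non-zero vertex of $f$ encountered along this walk after $x$, and let $P_i$ denote the initial segment of the walk from $x$ to $u_i$. Because any sub-walk of a W-walk whose two endpoints are non-zeros is itself a W-walk (the ``consecutive non-zeros'' sign conditions of the sub-walk are inherited from those of the ambient walk), the sub-walk from $u_i$ to $y_i$ is a W-walk, so $(u_i, y_i) \in R_W$ and hence $u_i \in W_i$. By the choice of $u_i$, every vertex of $P_i$ except the terminal one $u_i$ is a zero of $f$. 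Write $\sigma(P_i)$ for the product of the edge signatures along $P_i$.

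Next, fix any $i \neq j$ and concatenate the reverse of $P_i$ with $P_j$ to form a walk from $u_i$ to $u_j$ passing through $x$. Its only non-zeros of $f$ are the two endpoints $u_i$ and $u_j$, and the product of its edge signatures equals $\sigma(P_i)\sigma(P_j)$. The W-walk condition for this pair would read $f(u_i)\,f(u_j)\,\sigma(P_i)\sigma(P_j) > 0$. If it held, then $(u_i, u_j) \in R_W$, contradicting $W_i \cap W_j = \emptyset$. Therefore, setting $\epsilon_i := f(u_i)\sigma(P_i) \in \mathbb{R}\setminus\{0\}$, we obtain $\epsilon_i \epsilon_j < 0$ for all $i \neq j$ in $\{1,2,3\}$.

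The final step is a trivial sign argument: three nonzero real numbers cannot be pairwise of opposite sign, since $\epsilon_1\epsilon_2 < 0$ and $\epsilon_1\epsilon_3 < 0$ together force $\epsilon_2\epsilon_3 > 0$. This contradiction yields $D_1 \cap D_2 \cap D_3 = \emptyset$. I expect the only point requiring genuine care to be the clean extraction of the canonical first non-zero vertex $u_i$ and the zero-path $P_i$ (the walks in the paper may have repetitions and several non-zeros, so this reduction does rely on the sub-walk-of-a-W-walk observation); once that is in place, the pairwise product computation and the final pigeonhole on signs are immediate.
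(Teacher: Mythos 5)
Your proof is correct and follows essentially the same route as the paper's: pick $x$ in the triple intersection, extract for each $i$ a zero-walk from $x$ to a non-zero vertex of $W_i$, assign each the sign $\epsilon_i=f(u_i)\sigma(P_i)$ (the paper's $a_i$), and derive the impossible system of three pairwise-negative products. Your extra care in justifying that $u_i\in W_i$ via the sub-walk observation just makes explicit what the paper leaves implicit.
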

\begin{proof}
Suppose that $D_1\cap D_2\cap D_3\neq\emptyset$. Let $x\in D_1\cap D_2\cap D_3$. Then we have $f(x)=0$. By definition, we can find for each $i\in \{1,2,3\}$ a walk in $D_i$
\[x^i_0\sim x^i_1\sim \cdots \sim x^i_{p_i}=x\]
such that $f(x^i_0)\neq 0$ and $f(x^i_j)=0$ for any $j\in\{1,\ldots,p_i\}$. Set
\[a_i:=f(x^i_0)\sigma_{x^i_0x^i_1}\cdots \sigma_{x_{p_i-1}^ix}\in \mathbb{R},\,\,i=1,2,3.\]
Since $D_1,D_2,D_3$ are different from each other, we obtain
\[a_1a_2<0, \,\,a_2a_3<0,\,\,\text{and}\,\,a_3a_1<0,\]
which is a contradiction.
\end{proof}
\begin{corollary}\label{cor:3.8}
 Let $f$ be a non-zero function on a signed graph $\Gamma=(G,\sigma)$. Let $x$ be a vertex lying in two weak nodal domains $D$ and $D'$. Then the set
\begin{align*}
B(x):=\{y\in V: &\,\,\text{there exist  $m\in \mathbb{Z}$ and a walk}\,\,x=x_0\sim x_1\sim\cdots\sim x_m=y\\
&\,\,\text{such that} \,\,f(x_i)=0, i=0,1,\ldots,m-1.\}
\end{align*}
is contained in $D\cup D'$. In particular, we have $S_1(x)\subset D\cup D'$ where $S_1(x)=\{y\in V: y\sim x\}$.
\end{corollary}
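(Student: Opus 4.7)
The plan is to invoke Proposition~\ref{prop:1sphere} (which forbids a vertex from lying in three distinct weak nodal domains) to see that $D$ and $D'$ are necessarily the only weak nodal domains containing $x$, and then to classify each $y\in B(x)$ according to whether or not $f(y)=0$. Since $x\in D\cap D'$ and distinct weak nodal domains can share only zeros of $f$, we have $f(x)=0$; by the definition of weak nodal domain there exist non-zero vertices $u\in D$ and $u'\in D'$ together with W-walks $P$ from $x$ to $u$ and $P'$ from $x$ to $u'$.

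For $y\in B(x)$ with $f(y)\neq 0$, the walk $x=w_0\sim w_1\sim\cdots\sim w_m=y$ supplied by the definition of $B(x)$ contains exactly one non-zero vertex (namely $y$), so it is trivially a W-walk. Hence $x$ lies in the weak nodal domain $D_y$ containing $y$; combined with $x\in D\cap D'$, Proposition~\ref{prop:1sphere} forces $D_y\in\{D,D'\}$, and therefore $y\in D\cup D'$. For $y\in B(x)$ with $f(y)=0$, I will concatenate the reversed walk $y=w_m,w_{m-1},\ldots,w_0=x$ with $P$ to obtain a walk from $y$ to $u$. Since the prepended portion consists entirely of zeros, the list of consecutive non-zero pairs along the concatenated walk coincides with that along $P$, so the W-walk inequalities carry over verbatim; thus $y$ is W-walk connected to $u$ and lies in $D$, and the analogous argument with $P'$ places $y$ in $D'$ as well. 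The ``in particular'' statement for $S_1(x)$ is then immediate: every neighbor $y$ of $x$ is reachable from $x$ by the one-edge walk $x\sim y$, which qualifies for $B(x)$ because $f(x)=0$.

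No serious obstacle appears here; the main point to be careful about is that $P$ and $P'$ are merely W-walks and may visit several non-zero vertices before reaching $u$ or $u'$, so I should not assume any cleaner structure for them. The observation that rescues the $f(y)=0$ case is simply that prepending a walk of zeros to a W-walk leaves the sequence of consecutive non-zero pairs unchanged, so the W-walk condition is automatically inherited by the concatenation.
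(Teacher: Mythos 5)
Your proof is correct and follows essentially the same route as the paper: the non-zero case is handled exactly as in the paper by noting the walk from $x$ to $y$ is a W-walk and invoking Proposition \ref{prop:1sphere}, while for the zero case you simply spell out the concatenation argument that the paper compresses into ``by definition.'' Your version is, if anything, slightly more careful in the $f(y)=0$ case, where it even yields $y\in D\cap D'$.
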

\begin{proof}
 Let $y\in B(x)$. If $f(y)$=0, then we have $y\in D\cup D'$ by definition. In the case of $f(y)\neq 0$, we suppose that $y$ lies in a weak nodal domain $\overline{D}$ different from $D$ and $D'$. Then the vertex $x\in D\cap D'\cap \overline{D}$, which is a contradiction by Proposition \ref{prop:1sphere}.
\end{proof}

\subsection{Strong nodal domains of the first eigenfunction}
It is natural to ask whether the strong nodal domain of the first eigenfunction of a symmetric matrix $M$ is the whole graph of $M$ or not. Recall that when the induced signed graph is balanced, the strong nodal domain of the first eigenfunction is the whole graph by Perron-Frobenius theorem. When the induced signed graph is non-balanced, we show in this section that either case can happen.
\begin{theorem}\label{Thm:nonzero}
	Let $\Gamma=( G,\sigma ) $ be a connected signed graph where $G=(V,E)$. Then there exists a symmetric matrix $M$ compatible with $\Gamma$, such that  the first eigenvalue $\lambda_1$ of $M$ is simple and the corresponding eigenfunction $f_1$ is nonzero everywhere. In particular, we have $\mathfrak{S}(f_1)=1$ and the strong nodal domain of $f_1$ is the whole graph $G$.
\end{theorem}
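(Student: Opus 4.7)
The plan is to reduce to a perturbation of a spanning tree, where Perron--Frobenius applies, and then use continuity of simple eigenvalues to extend to the full graph.

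First, I would invoke Lemma \ref{lemma:2.8} to pick a spanning tree $T$ of the connected graph $G$ and a switching function $\tau\colon V\to\{+1,-1\}$ such that $\sigma^\tau_{xy}=+1$ for every $\{x,y\}\in E(T)$. By Theorem \ref{Thm:switch invariant}, nonzeroness and $\mathfrak{S}(\cdot)=1$ are preserved under switching, and if $M'$ is compatible with $\Gamma^\tau=(G,\sigma^\tau)$ with the desired properties, then $M:=D(\tau)M'D(\tau)$ is compatible with $\Gamma$ and has first eigenfunction $\tau f_1'$ nonzero everywhere. Hence it suffices to produce such an $M'$ for $\Gamma^\tau$.

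Next, I would construct a one-parameter family of symmetric matrices on $V$, compatible with $\Gamma^\tau$ for every $\epsilon>0$:
\[
(M_\epsilon)_{xy}=\begin{cases} -1, & \{x,y\}\in E(T),\\ -\epsilon\,\sigma^\tau_{xy}, & \{x,y\}\in E\setminus E(T),\\ 0, & \text{otherwise,}\end{cases}
\]
with zero diagonal. Since $E\setminus E(T)$ contributes entries of magnitude $\epsilon$ with the correct sign, $M_\epsilon$ is indeed compatible with $\Gamma^\tau$ for $\epsilon>0$. At $\epsilon=0$, the matrix $M_0$ is the negative of the adjacency matrix of the tree $T$, so $-M_0$ is a symmetric, entrywise nonnegative, irreducible matrix (because $T$ is connected). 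By the Perron--Frobenius theorem, the largest eigenvalue of $-M_0$ is simple with a strictly positive eigenvector; equivalently, $\lambda_1(M_0)$ is simple with a strictly positive eigenfunction $g_0$.

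Simple eigenvalues and their one-dimensional eigenprojections depend analytically (in particular continuously) on the matrix entries. Thus, for all sufficiently small $\epsilon>0$, $\lambda_1(M_\epsilon)$ remains simple and admits an eigenfunction $g_\epsilon$ arbitrarily close in norm to $g_0$. Because $g_0$ is bounded away from zero on the finite vertex set, $g_\epsilon$ is still strictly positive (hence nowhere zero) for $\epsilon$ small enough. Fixing such an $\epsilon$ and taking $M':=M_\epsilon$ yields the desired matrix on $\Gamma^\tau$. To conclude $\mathfrak{S}(g_\epsilon)=1$: each tree edge $\{x,y\}$ satisfies $g_\epsilon(x)\sigma^\tau_{xy}g_\epsilon(y)>0$ since $\sigma^\tau_{xy}=+1$ and $g_\epsilon>0$, so $T$ itself is a union of $S$-walks connecting all vertices, forcing a single strong nodal domain. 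Switching back via $\tau$ and applying Theorem \ref{Thm:switch invariant} completes the argument.

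The only genuinely delicate point is the continuity step: I need that the \emph{same} eigenvalue (the bottom of the spectrum) of $M_\epsilon$ is a simple continuous branch emanating from $\lambda_1(M_0)$ with its eigenvector varying continuously. This is standard from analytic perturbation theory for a simple isolated eigenvalue of a self-adjoint family, but it is the one place where care is required; everywhere else the construction and the compatibility check are mechanical.
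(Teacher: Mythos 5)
Your proposal is correct and follows essentially the same strategy as the paper's proof: switch so that a spanning tree becomes all-positive, apply Perron--Frobenius to an unperturbed matrix supported on a connected positive subgraph to get a simple bottom eigenvalue with a strictly positive eigenvector, add the remaining edges with weight $\epsilon$, and invoke continuity of the simple eigenvalue and its eigenvector before switching back. The only (immaterial) difference is that the paper takes the base matrix $M_+$ to be supported on \emph{all} positive edges of $\Gamma^\tau$ and perturbs only by the negative ones, whereas you take the base to be the spanning tree alone and perturb by all non-tree edges; both reductions are sound.
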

\begin{proof}
By Lemma \ref{lemma:2.8}, there exists a switching function $\tau: V\to \{+1,-1\}$ such that $\Gamma^\tau=(G,\sigma^\tau)$ has a spanning tree consisting of positive edges.
We consider the subgraph $\Gamma^\tau_+$ of $\Gamma^\tau$ with vertex set $V$ and edge set $\{\{x,y\}\in E: \sigma_{xy}=+1\}$.	Notice that $\Gamma^\tau_+$ is connected. Let $M_+$ by any symmetry matrix compatible with $\Gamma_+^\tau$. By Perron-Frobenius theorem, the first eigenvalue of $M_+$ is simple and its eigenfunction is positive on all vertices. We further consider the subgraph $\Gamma^\tau_-$ with vertex set $V$ and edge set $\{\{x,y\}\in E: \sigma_{xy}=-1\}$. Let $M_-$ be any symmetry matrix compatible with $\Gamma^\tau_-$. For any $\epsilon>0,\,\,M_++\epsilon M_-$ is compatible with $\Gamma^\tau$. By continuity of the eigenvalue and eigenfunction, the first eigenvalue $\lambda_{1}$ of $M_++\epsilon M_-$ is simple and the corresponding eigenfunction $f$ is positive when $\epsilon$ is small enough. Hence the matrix $D(\tau)(M_++\epsilon M_-)D(\tau)$ fulfills all requirements of the theorem.
	
By our construction, the signed graph $\Gamma$ has a spanning tree, whose walks are all strong nodal domain walks of the first eigenfunction $f_1$ of $D(\tau)(M_++\epsilon M_-)D(\tau)$. That is, the strong nodal domain of $f_1$ is the whole graph $G$.
\end{proof}

\begin{theorem}\label{Thm:zero vertex}
	Let $\Gamma=( G,\sigma ) $ be a non-balanced signed graph. Assume that there exists a vertex $z$ such that $\Gamma$ becomes balanced after removing $z$ and its incident edges. Then there exist a symmetric matrix $M$ compatible with $\Gamma$
	and an eigenfunction $f_1$ corresponding to its first eigenvalue, such that $f_1$ is zero at $z$ and non-zero elsewhere. In particular, the strong nodal domain of $f_1$ is not the whole graph $G$.
\end{theorem}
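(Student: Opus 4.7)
My plan is to construct $M$ as a block extension: build a suitable symmetric matrix $M'$ on $V\setminus\{z\}$ compatible with the balanced signed graph $\Gamma':=\Gamma\setminus\{z\}$, then append one row and one column indexed by $z$ so that a first eigenfunction $g$ of $M'$, extended by zero at $z$, becomes a first eigenfunction of $M$. Because the conclusion is switching-invariant (Theorem~\ref{Thm:switch invariant}), I would first apply Lemma~\ref{lemma:2.8} on $\Gamma'$ to reduce to the case where $\Gamma'$ has the all-positive signature, so that any symmetric matrix compatible with $\Gamma'$ is a generalized Laplacian. For clarity I would first treat the case where $\Gamma$ and $\Gamma'$ are both connected; the cut-vertex case will require only a small modification outlined at the end.

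Given this reduction, I would take $M'$ to be any generalized Laplacian of $\Gamma'$ and use Perron--Frobenius to obtain a simple smallest eigenvalue $\lambda_1'$ with eigenfunction $g>0$ on $V':=V\setminus\{z\}$. I then write
\[
M=\begin{pmatrix} M' & m \\ m^T & a\end{pmatrix},\qquad m_i=-\sigma_{x_iz}\mu_i\ \text{for }x_i\sim z,\ \mu_i>0,\ a\in\IR,
\]
so compatibility with $\Gamma$ holds for any choice of magnitudes $\mu_i>0$ and any $a\in\IR$. The key observation is that every negative cycle in $\Gamma$ must pass through $z$, because $\Gamma'$ is balanced; such a cycle uses exactly two edges at $z$ and its remaining portion is a positive path in $\Gamma'$, so its sign equals the product of the two incident edge signs. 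The non-balancedness of $\Gamma$ therefore forces two neighbors of $z$ with oppositely signed edges, and since $g>0$ the quantities $-\sigma_{x_iz}g(x_i)$ for $x_i\sim z$ range over both positive and negative values. Hence I can pick positive magnitudes $\mu_i$ so that $m^Tg=\sum_{x_i\sim z}(-\sigma_{x_iz}\mu_i)g(x_i)=0$, making $(g,0)^T$ an eigenfunction of $M$ for $\lambda_1'$ regardless of $a$.

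The main obstacle is to verify that $\lambda_1'$ is in fact the \emph{smallest} eigenvalue of $M$, not merely some eigenvalue: Cauchy interlacing alone gives only $\lambda_1(M)\leq\lambda_1(M')$. I would close this gap by choosing $a$ large. For any $f=(f',s)^T\in\IR^V$, decompose $f'=\alpha g+f''$ with $\langle f'',g\rangle=0$; using $m^Tg=0$, $M'g=\lambda_1'g$ and $\langle f'',M'f''\rangle\geq\lambda_2'\|f''\|^2$ one gets
\[\langle f,Mf\rangle-\lambda_1'\|f\|^2\geq(\lambda_2'-\lambda_1')\|f''\|^2-2|s|\,\|m\|\,\|f''\|+(a-\lambda_1')s^2.\]
Since $\lambda_1'$ is simple the gap $\lambda_2'-\lambda_1'$ is positive, so taking $a>\lambda_1'+\|m\|^2/(\lambda_2'-\lambda_1')$ makes this quadratic form in $(\|f''\|,|s|)$ strictly positive definite away from $f\propto(g,0)^T$. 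Consequently $\lambda_1(M)=\lambda_1'$ is simple and $f_1:=(g,0)^T$ is a first eigenfunction vanishing uniquely at $z$, so its strong nodal domain equals the induced subgraph on $V'$, a proper subgraph of $G$.

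If $\Gamma'$ is disconnected, so that $z$ is a cut vertex, the first eigenfunction $g$ of $M'$ need not be nonzero on every component. I would remedy this by rescaling the generalized Laplacian on each component of $\Gamma'$ so that their smallest eigenvalues coincide with a common value $\lambda_1'$; the first eigenspace of the resulting $M'$ is then the direct sum of the one-dimensional component eigenspaces, providing enough freedom both to choose a strictly positive representative $g$ and to arrange $m^Tg=0$ (using again that some component adjacent to $z$ carries mixed edge signs at $z$, by the same negative-cycle argument). The large-$a$ step then applies verbatim.
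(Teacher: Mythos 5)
Your construction in the connected case is correct and is essentially the construction the paper uses: switch so that $\Gamma\setminus\{z\}$ is all\mbox{-}positive, take the Perron eigenfunction $g>0$ of a generalized Laplacian $M'$ there, observe that non-balancedness forces $z$ to have neighbours joined by edges of both signs (every negative cycle must pass through $z$ and closes up via a positive path in $\Gamma'$), choose the magnitudes of the new off-diagonal entries so that $m^Tg=0$, and extend $g$ by zero at $z$. You make explicit the point $k_+\geq 1$, $k_-\geq 1$ that the paper leaves implicit. The one place you genuinely diverge is in certifying that the extended eigenvalue is still the \emph{smallest} eigenvalue of $M$: the paper keeps the diagonal entry fixed at $\lambda_1+1$, scales the coupling $m$ by a small $\epsilon$, and invokes continuity of eigenvalues, whereas you keep $m$ fixed and push the diagonal entry $a$ up, with an explicit positive-definiteness estimate using the spectral gap $\lambda_2'-\lambda_1'>0$. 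Both are valid; your version is more quantitative and self-contained.

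The final paragraph on the cut-vertex case, however, does not work as written. Once you adjust the blocks of $M'$ so that all components share the common first eigenvalue $\lambda_1'$, that eigenvalue has multiplicity equal to the number of components, so $\lambda_2'=\lambda_1'$ and the coefficient $(\lambda_2'-\lambda_1')$ in your quadratic form vanishes; the large-$a$ argument collapses. The obstruction is real, not just technical: if $(g,0)^T$ with $g$ nowhere zero is to be a \emph{first} eigenfunction of $M$, then $m$ must be orthogonal to the entire first eigenspace of $M'$, i.e.\ to the Perron vector of \emph{every} component adjacent to $z$; your negative-cycle argument only guarantees mixed edge signs at $z$ for \emph{one} such component. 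If some component (e.g.\ a single pendant vertex) meets $z$ only through edges of one sign, the orthogonality $m^T g_j=0$ is impossible and no compatible $M$ has a first eigenfunction vanishing exactly at $z$. (The paper's own proof silently assumes $\Gamma\setminus\{z\}$ is connected by invoking Perron--Frobenius on $M'$, so you are not worse off than the paper here, but your claimed fix should be withdrawn or restricted to the case where every component of $\Gamma\setminus\{z\}$ adjacent to $z$ sees both edge signs at $z$.)
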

\begin{proof}
	Assume that $\Gamma$ has $n+1$ vertices which are denoted by $\{x_1,...,x_{n+1}\}$. We further assume that $\Gamma$ becomes balanced after removing $x_{n+1}$ and its incident edges. We denote by $\Gamma'$ the resulting balanced graph.
By the switching invariance, we can suppose that the sign of each edge in $\Gamma'$ is $+1$. Let $M'$ be any symmetric matrix compatible with $\Gamma'$. By Perron-Frobenius theorem, the first eigenvalue $\lambda_{1}$ of $M'$ is simple and the corresponding eigenfunction $f$ is positive.
	
We construct a symmetric matrix $M^\epsilon$ for any $\epsilon>0$ as follows:
\begin{equation}
	M^\epsilon_{ij}=\left\{
          \begin{array}{ll}
            M'_{ij}, & \hbox{$i,j$=1,2,\ldots,n;} \\
            \lambda_1+1, & \hbox{$i=j=n+1$;} \\
            \frac{\epsilon k_-}{f(x_i)}, & \hbox{$j=n+1\,\text{or}\,i=n+1,\,\,x_{i}\sim x_{n+1}\,\text{and}\,\sigma_{x_ix_{n+1}}=+1$;} \\
            -\frac{\epsilon k_+}{f(x_i)}, &\hbox{$j=n+1\,\text{or}\, i=n+1,\,  \,x_{i}\sim x_{n+1}\,\text{and}\,\sigma_{x_ix_{n+1}}=-1$;} \\
            0, & \hbox{otherwise,}
          \end{array}
        \right.
\end{equation}
where $k_+$ ($k_-$, resp.) is the number of the positive (negative, resp.) edges incident with $x_{n+1}$.
We define
\begin{equation}
	f_1(x_i)=\left\{
	\begin{array}{ll}
		f(x_i), &  \hbox{$i=1,2,\ldots,n$;}\\
		0, & \hbox{$i=n+1$.}
	\end{array} \right.
\end{equation}

By direct computation, we have $M^\epsilon f_1=\lambda_1 f_1$ for any $\epsilon>0$.  Recall that when $\epsilon=0$, $\lambda_{1}$ is the first eigenvalue of $M^\epsilon$. By continuity of the eigenvalue, $\lambda_{1}$ is still the first eigenvalue of $M^\epsilon$ when $\epsilon$ is small enough. Hence, the matrix $M^\epsilon$ with a small enough $\epsilon$ fulfills all requirements of the theorem.
\end{proof}


\section{Nodal domain theorems}\label{section:upper}
In this section, we prove the following nodal domain theorem. The proof is a neat extension of methods from \cite{DGLS01}. We will discuss its consequence for symmetric matrices whose induced signed graphs are trees via a duality argument.
\begin{theorem}\label{Thm:upper bound}
Let $M$ be a symmetric matrix, $\Gamma=(G,\sigma)$ be its induced signed graph where $G=(V,E)$, and $\lambda_k$ be its $k$-th eigenvalue. For any eigenfunction $f_k$ corresponding to $\lambda_k$, i.e., $Mf_k=\lambda_kf_k$, we have
\begin{equation}
\mathfrak{S}(f_k)\leq k+r-1,\,\,\text{and}\,\,\,\,\mathfrak{W}(f_k)\leq k+c-1,
\end{equation}
where $r$ is the multiplicity of $\lambda_{k}$ and $c$ is the number of connected components of $G$. In particular, when the graph $G$ is connected, we have $\mathfrak{W}(f_k)\leq k$.

If the eigenfunction $f_k$ corresponding to $\lambda_k$ has \emph{minimal support}, then we have
\begin{equation}\label{eq:minsupp}
\mathfrak{S}(f_k)\leq k.
\end{equation}
\end{theorem}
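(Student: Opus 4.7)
My plan is to extend the test-function construction of Davies, Gladwell, Leydold, and Stadler~\cite{DGLS01} to the signed-graph setting, the signature $\sigma$ entering through the sign relation that defines an $S$-walk. Let $S_1,\ldots,S_p$ be the strong nodal domains of $f_k$, so $p=\mathfrak{S}(f_k)$, and for each $i$ define $\phi_i$ by $\phi_i(x)=f_k(x)$ if $x\in S_i$ and $\phi_i(x)=0$ otherwise. Since the $S_i$ are pairwise disjoint, the $\phi_i$ are linearly independent and span a $p$-dimensional subspace $W\subset\IR^n$. Using $Mf_k=\lambda_k f_k$, a direct computation yields
\[
\lambda_k\|\phi_i\|^2-\langle\phi_i,M\phi_i\rangle=\sum_{x\in S_i,\,y\notin S_i}M_{xy}f_k(x)f_k(y).
\]
For each contributing edge $\{x,y\}$ with $x\in S_i$, either $y$ is a zero of $f_k$ (the term vanishes), or $y\in S_j$ for some $j\neq i$, in which case the length-$2$ walk $(x,y)$ cannot be an $S$-walk (otherwise $x$ and $y$ would be $R_S$-equivalent); this forces $f_k(x)\sigma_{xy}f_k(y)<0$ and hence $M_{xy}f_k(x)f_k(y)=-|M_{xy}|\sigma_{xy}f_k(x)f_k(y)>0$. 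Setting $E_{ij}:=\sum_{\{x,y\}:x\in S_i,y\in S_j}M_{xy}f_k(x)f_k(y)\ge 0$, an expansion gives
\[
\lambda_k\|g\|^2-\langle g,Mg\rangle=\sum_{i<j}(a_i-a_j)^2\,E_{ij}\ge 0\quad\text{for every } g=\sum_i a_i\phi_i\in W.
\]
The mini-max principle (Lemma~\ref{Lemma:minmax}) applied to the $p$-dimensional $W$ then gives $\lambda_p\le\lambda_k$, which forces $p\le k+r-1$.

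For the weak bound I repeat the construction with $\psi_i(x)=f_k(x)$ on the nonzero part $W_i$ of the $i$-th weak nodal domain and $\psi_i(x)=0$ otherwise. Since a length-$1$ walk between two nonzeros is an $S$-walk exactly when it is a $W$-walk, direct edges between distinct $W_i,W_j$ again satisfy $f_k(x)\sigma_{xy}f_k(y)<0$, so the same identity delivers the weaker bound $\mathfrak{W}(f_k)\le k+r-1$. To sharpen this to $k+c-1$, I invoke the discrete unique continuation statement (Lemma~\ref{lemma:unique}). Supposing first that $G$ is connected and $q:=\mathfrak{W}(f_k)\ge k+1$, I intersect $W=\mathrm{span}\{\psi_i\}$ with $V^\perp$, the span of the eigenvectors corresponding to $\lambda_k,\ldots,\lambda_n$: then $\dim(W\cap V^\perp)\ge q-k+1\ge 2$, and every nonzero element of this intersection has Rayleigh quotient squeezed to $\lambda_k$ and is therefore a $\lambda_k$-eigenfunction. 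Imposing the single linear condition $a_j=0$ for a fixed $j$ produces such an eigenfunction vanishing identically on the full weak nodal domain $W_j^0$, which Lemma~\ref{lemma:unique} forbids; contradiction. The disconnected case follows by block-decomposing $M$ along the components of $G$, applying the connected bound in each block at the smallest index at which $\lambda_k$ appears, and summing (the cost of a tighter per-block index than $k$ is exactly the extra $c-1$).

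For the minimal-support statement, suppose for contradiction that $p:=\mathfrak{S}(f_k)\ge k+1$. The same dimension count gives $\dim(W\cap V^\perp)\ge p-k+1\ge 2$, and every nonzero element is a $\lambda_k$-eigenfunction supported inside $\bigcup_i S_i=\supp(f_k)$. Picking any $x_0\in\supp(f_k)$, the linear condition $g(x_0)=0$ drops the dimension by at most one, so there exists a nonzero $g\in W\cap V^\perp$ with $g(x_0)=0$; this $g$ is a $\lambda_k$-eigenfunction with $\supp(g)\subsetneq\supp(f_k)$, contradicting the minimal-support hypothesis on $f_k$. The main obstacle I anticipate is the sharpening of the weak bound to $k+c-1$, since it is the only step requiring a new tool beyond the test-function construction and mini-max; once Lemma~\ref{lemma:unique} is in hand, the sign bookkeeping for the $E_{ij}$ and the deduction of the minimal-support bound from $\dim(W\cap V^\perp)\ge 2$ are essentially formal once the signed-graph definitions are unwound.
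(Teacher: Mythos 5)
Your proof is correct and follows essentially the same route as the paper: the test functions supported on the nodal domains, the sign analysis showing the cross terms $E_{ij}$ are nonnegative (equivalently the Duval--Reiner identity of Lemma \ref{Lemma:compute}), the mini-max principle, and Lemma \ref{lemma:unique} for the sharpened weak bound. The only cosmetic differences are that you bound the Rayleigh quotient on the whole $p$-dimensional test space and intersect with the span of the top eigenvectors, whereas the paper picks coefficients orthogonal to $f_1,\ldots,f_{m-1}$, and your minimal-support argument imposes $g(x_0)=0$ instead of using only $k$ of the domains; both variants are sound.
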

\begin{remark}
The estimate $\mathfrak{S}(f_k)\leq k+r-1$ has been proved by Mohammadian \cite{Mohammadian16}, see Remark \ref{rmk:Mohammadian}.
Mohammadian's proof using his estimate $c(\Gamma^{\leq}_{M,f_k}(G))\leq k$ for a connected graph $G$ and the interlacing theorem.  We give a direct argument below via the minimax principle.
\end{remark}
\begin{remark}
The estimate (\ref{eq:minsupp}) for the cases $k=1$ and $k=2$ is proved in \cite[Proposition 7 and Theorem 8]{Mohammadian16} extending a result of van der Holst \cite[Proposition 1]{vdH95}. We show it for any $k$ below. Recall from Lemma \ref{lemma:2.3}, any symmetric matrix has a basis consisting of eigenfunctions with minimal support. All the eigenfunctions $f_1,\ldots, f_6$ in Example \ref{ex:fish} have minimal supports.
\end{remark}

We prepare a crucial lemma, which is a reformulation of Duval and Reiner \cite[Lemma 5]{DR99}.
\begin{lemma}\label{Lemma:compute}
Let $M$ be a symmetric matrix. Let $\Gamma=(G,\sigma)$ be the induced signed graph of $M$ where $G=(V,E)$. Then for any two functions $f,g: V\to \mathbb{R}$, we have
\[\langle fg,M(fg) \rangle=\langle fg,fMg \rangle +\sum_{\{x,y\}\in E}(-M_{xy})g(x)g(y)(f(x)-f(y))^2.\]
\end{lemma}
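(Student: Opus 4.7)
The plan is to verify the identity by a direct expansion of both inner products as double sums, followed by symmetrization using $M_{xy}=M_{yx}$ and a completion of the square. This is essentially the discrete analogue of integration by parts for the pointwise product $fg$.

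First I would write out the two inner products explicitly, namely
\[\langle fg,M(fg)\rangle=\sum_{x,y\in V}M_{xy}\,f(x)g(x)f(y)g(y),\qquad \langle fg,fMg\rangle=\sum_{x,y\in V}M_{xy}\,f(x)^2 g(x)g(y),\]
and take the difference. The terms organize into
\[\langle fg,M(fg)\rangle-\langle fg,fMg\rangle=\sum_{x,y\in V}M_{xy}\,g(x)g(y)\bigl[f(x)f(y)-f(x)^2\bigr].\]

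Second, since $M_{xy}g(x)g(y)$ is symmetric in $x,y$, I would swap the indices in the $f(x)^2$ piece to write it as $\tfrac{1}{2}\sum_{x,y}M_{xy}g(x)g(y)(f(x)^2+f(y)^2)$. Combining with the $f(x)f(y)$ piece completes the square, yielding
\[\langle fg,M(fg)\rangle-\langle fg,fMg\rangle=-\tfrac{1}{2}\sum_{x,y\in V}M_{xy}\,g(x)g(y)(f(x)-f(y))^2.\]

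Finally, the diagonal terms $x=y$ vanish because $(f(x)-f(y))^2=0$ there, so the sum reduces to one over ordered pairs $x\neq y$ with $M_{xy}\neq 0$, i.e. over ordered adjacent pairs of the induced signed graph $\Gamma=(G,\sigma)$. Each unordered edge $\{x,y\}\in E$ contributes twice, cancelling the factor $\tfrac{1}{2}$ and producing
\[\sum_{\{x,y\}\in E}(-M_{xy})\,g(x)g(y)(f(x)-f(y))^2,\]
as desired. There is no real obstacle; the only thing to be careful about is the bookkeeping when converting between sums over ordered pairs and sums over unordered edges, and the observation that terms with $M_{xy}=0$ contribute nothing so the range of summation can be freely restricted to edges of $G$.
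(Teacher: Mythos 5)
Your proof is correct and is essentially the same direct computation as the paper's: both expand the two quadratic forms and use the symmetry $M_{xy}=M_{yx}$ to assemble the square $(f(x)-f(y))^2$ over each edge. The only cosmetic difference is that you symmetrize the double sum with a factor $\tfrac{1}{2}$ and complete the square, whereas the paper adds and subtracts the cross term $M_{xy}f(x)g(y)$ inside the single sum; the bookkeeping between ordered pairs and unordered edges is handled correctly in your version.
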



\begin{proof}
By a direct calculation, we have
	\begin{equation*}
		\begin{aligned}
			&\langle fg,M(fg) \rangle\\
	        =&\sum_{x\in V}f(x)g(x)\left[\sum_{y\sim x}M_{xy}f(y)g(y)+M_{xx}f(x)g(x)\right]\\
	        =&\sum_{x\in V}f(x)g(x)\left[\sum_{y\sim x}\left(M_{xy}f(y)g(y)-M_{xy}f(x)g(y)+M_{xy}f(x)g(y)\right)+M_{xx}f(x)g(x)\right]\\
			=&\sum_{x\in V}f(x)g(x)\sum_{y\sim x}(-M_{xy})(f(x)-f(y))g(y)+\sum_{x\in V}f^2(x)g(x)\left[\sum_{y\sim x }M_{xy}g(y)+M_{xx}g(x)\right] \\
			=&\sum_{\{x,y\}\in E}(-M_{xy})g(x)g(y)(f(x)-f(y))^2+\langle fg,fMg\rangle.		
		\end{aligned}
	\end{equation*}
This completes the proof.
\end{proof}

The following corollary will be crucial for the proof of Theorem \ref{Thm:lower bound} in Section \ref{Section:lowerbd}.
\begin{corollary}\label{cor:XY}
Let $g$ be an eigenfunction of $M$ such that $Mg=\lambda_k g$. Let $D(g)$ be the diagonal matrix with $D(g)_{xx}=g(x)$. Then we have
\[\langle f,D(g)(M-\lambda_{k}I)D(g)f \rangle =\sum_{\{x,y\}\in E}(-M_{xy})g(x)g(y)(f(x)-f(y))^2.\]
\end{corollary}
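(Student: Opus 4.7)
The plan is to recognize this as essentially a direct rewriting of Lemma \ref{Lemma:compute} under the eigenvalue hypothesis. The key observation is that for any function $f : V \to \mathbb{R}$, the vector $D(g) f$ is nothing but the pointwise product $fg$, since $(D(g) f)(x) = g(x) f(x)$. Hence
\[
\langle f, D(g) M D(g) f \rangle = \langle D(g) f, M D(g) f \rangle = \langle fg, M(fg) \rangle,
\]
and similarly $\langle f, D(g)(\lambda_k I) D(g) f \rangle = \lambda_k \langle fg, fg \rangle$.

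Next, I would apply Lemma \ref{Lemma:compute} directly to expand the quadratic form $\langle fg, M(fg) \rangle$:
\[
\langle fg, M(fg) \rangle = \langle fg, f \, Mg \rangle + \sum_{\{x,y\} \in E} (-M_{xy}) g(x) g(y) (f(x) - f(y))^2.
\]
Since $g$ is an eigenfunction with $Mg = \lambda_k g$, the first term on the right-hand side becomes
\[
\langle fg, f \cdot \lambda_k g \rangle = \lambda_k \langle fg, fg \rangle.
\]

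Finally, combining these equalities:
\[
\langle f, D(g)(M - \lambda_k I) D(g) f \rangle = \langle fg, M(fg) \rangle - \lambda_k \langle fg, fg \rangle = \sum_{\{x,y\} \in E} (-M_{xy}) g(x) g(y) (f(x) - f(y))^2,
\]
which is the claimed identity. There is no substantive obstacle here; the corollary is a bookkeeping consequence of Lemma \ref{Lemma:compute} together with the eigenvalue equation, and the only thing to verify carefully is the identification $D(g) f = fg$ inside the inner products.
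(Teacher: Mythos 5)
Your proof is correct and follows essentially the same route as the paper: identify $D(g)f$ with the pointwise product $fg$, apply Lemma \ref{Lemma:compute}, and use $Mg=\lambda_k g$ to convert the term $\langle fg, fMg\rangle$ into $\lambda_k\langle fg,fg\rangle$. No issues.
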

\begin{proof}
By Lemma \ref{Lemma:compute}, we have
\[\langle fg,(M-\lambda_{k}I)fg \rangle =\langle fg, M(fg)-fMg \rangle=\sum_{\{x,y\}\in E}(-M_{xy})g(x)g(y)(f(x)-f(y))^2.\]
Then the corollary follows directly by observing that
\[\langle fg,(M-\lambda_{k}I)fg \rangle=\langle f,D(g)(M-\lambda_{k}I)D(g)f \rangle.\]
\end{proof}


For any $k$, let $f_k$ be the eigenfunction of $M$ corresponding to the $k$-th eigenvalue $\lambda_k$. Next, we show the estimates of $\mathfrak{S}(f_k)$ and $\mathfrak{W}(f_k)$ in Theorem \ref{Thm:upper bound}.

\begin{proof}[Proof of Theorem \ref{Thm:upper bound}: Estimates of $\mathfrak{S}(f_k)$]

Let $\{\Omega_i\}_{i=1}^m$ be the strong nodal domains of $f_k$, where $m=\mathfrak{S}(f_k)$.
For each $i$, we define
\begin{equation}\label{eq:gi}
g_{i}(x)=\left\{
           \begin{array}{ll}
             f_k(x), & \hbox{if $x\in \Omega_i$;} \\
             0, & \hbox{otherwise.}
           \end{array}
         \right.
\end{equation}
Since $g_i, i=1,2,\ldots,m,$ are linearly independent, we can find $a_i\in \mathbb{R}, i=1,2,\ldots,m$,\,such that the function $g:=\sum_{i=1}^{m}a_ig_i$ satisfies
\[\langle g,f_i \rangle=0,\,\,\text{for}\,\,i=1,\ldots,m-1.\]

We introduce a function $a:V \to \mathbb{R}$ defined as
$a(x)=a_i$ if $x\in \Omega_i$ for some $i$ and $a(x)=0$ otherwise. Then we can write $g=af_k$. Applying Lemma \ref{Lemma:compute} yields

 \begin{equation*}
 	\begin{aligned}
 	\langle g,Mg \rangle
 		&= \langle af_k,aMf_k\rangle+\sum_{\{x,y\})\in E}(-M_{xy})f_k(x)f_k(y)(a(x)-a(y))^2\\
 		&= \lambda_{k}\langle g,g \rangle+\sum_{\{x,y\}\in E}(-M_{xy})f_k(x)f_k(y)(a(x)-a(y))^2.
 	\end{aligned}
 \end{equation*}
 By Lemma \ref{Lemma:minmax}, we derive
 \begin{equation*}
 	\lambda_m\leq \frac{\langle g,Mg \rangle}{\langle g,g\rangle}\leq \lambda_{k}+\frac{1}{{\langle g,g \rangle}}\sum_{\{x,y\}\in E}(-M_{xy})f_k(x)f_k(y)(a(x)-a(y))^2.
 \end{equation*}
For each edge $\{x,y\}\in E$,
if $(-M_{xy})f_k(x)f_k(y)>0$, then $f_k(x)\sigma_{xy}f_k(y)>0$. That is, the vertices $x$ and $y$ lie in the same strong nodal domain. Hence, $a(x)-a(y)=0$ and $(-M_{xy})f_k(x)f_k(y)(a(x)-a(y))^2=0$. If, otherwise, $(-M_{xy})f_k(x)f_k(y) \leq 0$, we have $(-M_{xy})f_k(x)f_k(y)(a(x)-a(y))^2\leq 0$. Therefore, we obtain
\[\sum_{\{x,y\}\in E}(-M_{xy})f_k(x)f_k(y)(a(x)-a(y))^2\leq 0.\]
This leads to $\lambda_m\leq \lambda_k$. Recall that $\lambda_{k}<\lambda_{k+r}$, we have $\mathfrak{S}(f_k)=m\leq k+r-1$.

Suppose that $f_k$ has minimal support. Next we show $\mathfrak{S}(f_k)\leq k$.
We prove that by contradiction. Assume that $\mathfrak{S}(f_k)>k$. We construct a nonzero function $g:=\sum_{i=1}^k a_ig_i$ as in the above such that $\langle g,f_i\rangle=0$ for $i=1,2,\ldots, k-1$. By construction, we have $\mathfrak{S}(g)\leq k$. By the same argument as in the above, we derive
\[\lambda_k=\frac{\langle g, Mg\rangle}{\langle g, g\rangle}.\]
Therefore, $g$ is an eigenfunction corresponding to $\lambda_k$. However, we have $\supp(g)\varsubsetneq \supp(f)$ which is a contradiction.
\end{proof}

In order to show the estimate of $\mathfrak{W}(f_k)$, we first prepare the following discrete unique continuation lemma for eigenfunctions.
\begin{lemma}\label{lemma:unique} Consider an eigenfunction $f$ of a symmetric matrix $M$ corresponding to an eigenvalue $\lambda$. Let $\{D_i\}_{i=1}^m$ be the set of weak nodal domains of $f$. For each $i\in \{1,\ldots,m\}$, define a function $g_i$ as below
\[g_i(x)=\left\{
               \begin{array}{ll}
                 f(x), & \hbox{if $x\in D_i$;} \\
                 0, & \hbox{otherwise.}
               \end{array}
             \right.
\]
Assume that the induced signed graph $\Gamma=(G,\sigma)$ of $M$ is connected. If the function \[g:=\sum_{i=1}^ma_ig_i,\,\, \text{where} \,\,a_i\in \mathbb{R},\,i=1,2,\ldots,m,\] is an eigenfunction of $M$ corresponding to $\lambda$, then we have
\[a_1=a_2=\cdots=a_m.\]
\end{lemma}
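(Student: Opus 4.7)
The plan is to reduce the lemma to showing $a_i = a_j$ for any two weak nodal domains $D_i, D_j$ that are adjacent in the meta-graph $G_D$ of Proposition~\ref{prop:connect}; since $G_D$ is connected, this equality will propagate to give $a_1 = \cdots = a_m$. As a preliminary simplification, I would invoke Theorem~\ref{Thm:switch invariant} with $\tau(x) = \operatorname{sgn}(f(x))$ (extended arbitrarily to the zero set) and work with $\tilde f = \tau f \ge 0$, $\tilde g = a\tilde f$, and $M^\tau$; the decomposition into weak nodal domains and the coefficients $a_i$ are unchanged, but all subsequent sign analysis becomes transparent.

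The easy half is an energy identity. Applying Corollary~\ref{cor:XY} with its $g$ playing the role of our $f$ and its $f$ the role of our $a$, I obtain
\[
0 \;=\; \langle g, (M - \lambda I) g \rangle \;=\; \sum_{\{x,y\} \in E} (-M_{xy}) f(x) f(y) (a(x) - a(y))^2.
\]
For any edge $\{x,y\}$ with $x \in W_i$, $y \in W_j$, $i \ne j$, the single-edge walk cannot be a W-walk, so $f(x) \sigma_{xy} f(y) < 0$, making the corresponding term $\le 0$ with equality iff $a_i = a_j$. All terms being non-positive and summing to zero forces each to vanish, yielding $a_i = a_j$ whenever $W_i$ and $W_j$ share a direct non-zero edge.

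For adjacencies bridged by a zero vertex, I would use a sign-uniformity argument. Fix a zero $z \in D_i \cap D_j$ (by Proposition~\ref{prop:1sphere}, no other $D_k$ contains $z$) with both $N_i(z) := \{y \sim z : y \in W_i\}$ and $N_j(z)$ non-empty. For any $y_1 \in N_i(z)$ and $y_2 \in N_j(z)$, the walk $y_1, z, y_2$ connects different weak nodal domains and so fails to be a W-walk: $\sigma_{y_1 z}\sigma_{zy_2} < 0$. Varying $y_1$ forces $\sigma_{zy}$ to have constant sign across $N_i(z)$, and with $f \ge 0$ this makes $S_i(z) := \sum_{y \in N_i(z)} M_{zy} f(y)$ a sum of nonzero terms of one sign, hence $S_i(z) \ne 0$. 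The eigenfunction equations $(Mf)(z) = 0$ and $(Mg)(z) = 0$ then read $S_i(z) + S_j(z) = 0$ and $a_i S_i(z) + a_j S_j(z) = 0$; subtracting yields $(a_i - a_j) S_i(z) = 0$ and therefore $a_i = a_j$.

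The main obstacle I anticipate is excluding the degenerate configurations where a bridging zero $z$ has one of $N_i(z), N_j(z)$ empty, or where the bridging edge itself has both endpoints zero. In the former, say $N_j(z) = \emptyset$, the vertex $z$ reaches $W_j$ only through a zero-chain $z, z_1, \ldots, z_s \in W_j$ with $s \ge 2$; applying the same walk trick to $y_1, z, z_1, \ldots, z_s$ (not a W-walk, since its endpoints lie in different weak nodal domains) again forces $\sigma_{y_1 z}$ uniform over $N_i(z)$ and hence $S_i(z) \ne 0$, yet $N_j(z) = \emptyset$ combined with $(Mf)(z) = 0$ demands $S_i(z) = 0$, a contradiction that rules out this case. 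For a bridging edge with both endpoints zero, I pass to the zero-connected component $Z^*$ containing the bridge, which lies entirely in $D_i \cap D_j$; its frontier is non-empty, and the preceding rule-out forces every frontier vertex to have non-zero neighbors in both $W_i$ and $W_j$, so the previous argument applies. Combining all of this with the connectedness of $G_D$ delivers the desired equality $a_1 = \cdots = a_m$.
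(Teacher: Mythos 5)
Your proof is correct and follows essentially the same route as the paper's: the energy identity from Corollary \ref{cor:XY} handles edges between non-zeros of different domains, the eigenfunction equation at a bridging zero combined with sign-uniformity of its non-zero neighbors handles the rest, and connectedness of $G_D$ (Proposition \ref{prop:connect}) propagates the equality. The only substantive variations are that at a bridging zero you solve the linear system $S_i+S_j=0$, $a_iS_i+a_jS_j=0$ instead of evaluating $M\bigl(f-\tfrac{1}{a_i}g\bigr)$ as the paper does (which spares you the paper's bookkeeping of which $a_i$ is non-zero), and your explicit rule-out of the degenerate neighborhoods via zero-chains spells out precisely the sign-uniformity claim that the paper's Case 2 justifies only with the phrase ``by the definition of the weak nodal domain.''
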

\begin{remark}
We have the following interesting consequence: For those functions $g$ defined as above, if $g=f$ on one weak nodal domain, then $g=f$ everywhere.
\end{remark}
\begin{proof}[Proof of Lemma \ref{lemma:unique}]
We first observe that $g$ can be reformulated as a product of two functions, $g=af$, where the function $a: V\to \mathbb{R}$ is defined as $a(x)=a_i$ if $x\in D_i$ and $f(x)\neq 0$, and $a(x)=0$ otherwise.

For any adjacent $D_i$ and $D_j$, we can always find $x_0 \in D_i,\,y_0\in D_j\setminus D_i$ such that $x_0\sim y_0$. Indeed, when $D_i\cap D_j\neq \emptyset$, due to the connectedness of $D_j$, there exists a path connecting any $x_i\in D_i\cap D_j$ and any $z\in D_j\setminus D_i$.

Next, we show $a_i=a_j$ for any two adjacent weak nodal domains $D_i$ and $D_j$ if $a_i\neq 0$. We divide our arguments into two cases.

\textbf{Case 1}: $f(x_0)\neq 0$. By Lemma \ref{Lemma:compute}, we have
\begin{equation}\label{eq:m2}
 \lambda=\frac{\langle g, Mg\rangle}{\langle g,g\rangle}=\lambda+\frac{1}{\langle g,g\rangle}\sum_{\{x,y\}\in E}(-M_{xy})f(x)f(y)(a(x)-a(y))^2.
\end{equation}
For each edge $\{x,y\}\in E$, if $(-M_{xy})f(x)f(y)>0$, then $f(x)\sigma_{xy}f(y)>0$. That is, the vertices $x$ and $y$ lie in the same weak nodal domain with both $f(x)$ and $f(y)$ nonzero, which means $a(x)-a(y)=0$. Therefore, we obtain
\begin{equation}\label{eq:nonnegative1}
(-M_{xy})f(x)f(y)(a(x)-a(y))^2\leq 0,\,\,\text{for any }\,\,\{x,y\}\in E,
\end{equation}
Combining (\ref{eq:m2}) and (\ref{eq:nonnegative1}) yields that for any $\{x,y\}\in E$,
 \begin{equation}\label{eq:zero}
	-M_{xy}f(x)f(y)(a(x)-a(y))^2= 0.
\end{equation}
For the edge $\{x_0,y_0\}$, we have  $f(y_0)\neq 0$ since $y_0\in D_j\setminus D_i$. Moreover, we have \[-M_{x_0y_0}f(x_0)f(y_0)< 0,\] since $x_0,y_0$ lies in two different weak nodal domains with both $f(x_0)$ and $f(y_0)$ nonzero. By (\ref{eq:zero}), we have $a(x_0)=a(y_0)$. Since $f(x_0)\neq 0$ and $f(y_0)\neq 0$, we have $a_i=a_j$.

\textbf{Case 2}: $f(x_0)=0$.
By Corollary \ref{cor:3.8}, we have $S_1(x_0):=\{v\in V: v\sim x_0\}\subset D_i\cup D_j$.\\
We define a function $h:=f-\frac{1}{a_i}g$. Observe that $h|_{D_i}=0$, and $h$ is an eigenfunction of $M$ corresponding to $\lambda$.
So we have
\begin{align*}
	0=&-\lambda h(x_0)=Mh(x_0)=\sum_{y\sim x_0}M_{x_0y}h(y)\\
    =&\sum_{\substack{y\sim x_0\\ y\in D_j\setminus D_i}}M_{x_0y}h(y)=\left(1-\frac{a_j}{a_i}\right)\sum_{\substack{y\sim x_0\\ y\in D_j\setminus D_i}}M_{x_0y}f(y).
\end{align*}
By the definition of the weak nodal domain, it holds that
\[M_{x_0y}f(y)M_{x_0y'}f(y')>0,\,\,\text{for any}\,\,y,y'\sim x_0 \,\,\text{and}\,\,y,y'\in D_j\setminus D_i.\]
Therefore, we have
\begin{equation*}
	\sum_{\substack{y\sim x_0\\ y\in D_j\setminus D_i}}M_{x_0y}f(y)\neq 0.
 \end{equation*}
This tells that $a_i=a_j$.

Since $g$ is an eigenfunction, at least one of $a_i, \,i=1,\ldots, m$ is non-zero. Then the lemma follows directly from the above argument and the connectedness from Proposition \ref{prop:connect}.
\end{proof}
\begin{proof}[Proof of Theorem \ref{Thm:upper bound}: Estimates of $\mathfrak{W}(f_k)$]

We first assume that the signed graph induced by $M$ is connected. We denote all weak nodal domains of $f_k$ by $\{D_i\}_{i=1}^m$, where $m=\mathfrak{W}(f_k)$. We introduce for each $i$
\begin{equation*}
g_{i}(x)=\left\{
           \begin{array}{ll}
             f_k(x), & \hbox{if $x\in D_i$;} \\
             0, & \hbox{otherwise.}
           \end{array}
         \right.
\end{equation*}
Let $a_i\in \mathbb{R}, i=1,2,\ldots,m,$ be $m$ constants such that $g:=\sum_{i=1}^{m}a_ig_i$ satisfies $\langle g,f_i \rangle=0$ for $i=1,....m-1$. We define a function $a:V \to \mathbb{R}$ as $a(x)=a_i$ if $x\in D_i$ and $f_k(x)\neq 0$, and $a(x)=0$ otherwise. By construction, $g=af_k$. We then derive
 \begin{equation}\label{eq:mkRayleigh}
	\lambda_m\leq \frac{\langle g,Mg \rangle}{\langle g,g\rangle}\leq \lambda_{k}+\frac{1}{{\langle g,g \rangle}}\sum_{\{x,y\}\in E}(-M_{xy})f_k(x)f_k(y)(a(x)-a(y))^2.
\end{equation}
Similarly as in the proof of (\ref{eq:nonnegative1}), we have
\begin{equation}\label{eq:nonnegative}
(-M_{xy})f_k(x)f_k(y)(a(x)-a(y))^2\leq 0,\,\,\text{for any }\,\,\{x,y\}\in E,
\end{equation}
and, hence,
\begin{equation}\label{eq:mk}
\lambda_m\leq \lambda_k.
\end{equation}
We argue by contradiction. Suppose $m>k$, then $\lambda_{m}\geq \lambda_{k}$. By (\ref{eq:mkRayleigh}) and (\ref{eq:mk}), we have
\begin{equation}
	\lambda_{m}=\frac{\langle g,Mg \rangle}{\langle g,g \rangle}= \lambda _{k}.
\end{equation}
Hence, $g$ is an eigenfunction of $M$ corresponding $\lambda_{k}$.

Then we can apply Lemma \ref{lemma:unique} to show that $g=af_k$ where $a$ is a nonzero constant function. However, $\langle g,f_{j} \rangle=0$ for any $j<m$ by construction. Our assumption $m>k$ then implies $\langle g,f_k \rangle=0$. That is, $a\langle f_k,f_k \rangle=0$. This is a contradiction. So we get
\begin{equation}\label{eq:weaknodal}
\mathfrak{W}(f_k)=m\leq k.
\end{equation}

In general, we denote by $\{\Gamma_i\}_{i=1}^c$  the $c$ connected components of the signed graph induced by $M$. Let $M^i, f_k^i$ be the restriction of $M, f_k$ to the connected component $\Gamma^i$ respectively. Then either $f_k^i$ is identically zero on $\Gamma_i$ or $M^i f_k^i=\lambda_{k_i}^i f_k^i$, where $\lambda_{k_i}^i=\lambda_{k}$ is the $k_i$-th eigenvalue of $M^i$. Moreover, we can assume $\lambda^i_{k_i-1}<\lambda^i_{k_i}$. Without loss of generality, we assume $\{\Gamma_i\}_{i=1}^\ell$ be the connected components on which $f_k$ is not identically zero. Employing the fact (\ref{eq:weaknodal}) we estimate
	\[\mathfrak{W}(f_k)\leq\sum_{i=1}^\ell  k_i\leq \sum_{i=1}^\ell (k_i-1)+\ell<k+\ell\leq k+c. \]
This completes the proof.
\end{proof}

\begin{corollary}\label{cor:fiedler}
Let $M$ be an $n\times n$ symmetric matrix with the induced signed graph $\Gamma=(T,\sigma)$, where $T$ is a tree. If $f_k$ is an eigenfunction corresponding to the $k$-th eigenvalue $\lambda_k$ of $M$ which is not zero at any vertex, then
 \begin{itemize}
   \item [(i)] $\lambda_k$ is simple;
   \item [(ii)]$\mathfrak{S}(f_k)=k$.
 \end{itemize}
Consequently, any eigenfunction corresponding to a multiple eigenvalue of such a matrix $M$ vanishes on at least one vertex.
\end{corollary}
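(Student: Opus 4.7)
The plan is to combine the tree duality identity (Theorem~\ref{Thm:duality}) with the weak nodal domain upper bound (Theorem~\ref{Thm:upper bound}), applied to both $M$ and $-M$. The preliminary observation that makes the duality useful here is that because $f_k$ has no zeros, the conditions defining $S$-walks and $W$-walks coincide, so
\[\mathfrak{S}(f_k)=\mathfrak{W}(f_k)\quad\text{and}\quad\overline{\mathfrak{S}}(f_k)=\overline{\mathfrak{W}}(f_k).\]
Theorem~\ref{Thm:duality} applied with $c=1$, $z=0$, $e_0=0$ then yields the identity $\mathfrak{S}(f_k)+\overline{\mathfrak{S}}(f_k)=n+1$.

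Next, I would observe that $-M$ has induced signed graph $-\Gamma=(T,-\sigma)$ and that $f_k$ is an eigenfunction of $-M$ with eigenvalue $-\lambda_k$. Writing $r$ for the multiplicity of $\lambda_k$, the equal block $\lambda_k=\lambda_{k+1}=\cdots=\lambda_{k+r-1}$ corresponds, in the increasing spectrum of $-M$, to positions $n-k-r+2,\ldots,n-k+1$. I would invoke Theorem~\ref{Thm:upper bound} with the smallest available index in each spectrum to obtain
\[\mathfrak{W}(f_k)\le k\quad\text{and}\quad\overline{\mathfrak{W}}(f_k)\le n-k-r+2.\]
Adding these and comparing with the duality identity forces
\[n+1\le k+(n-k-r+2)=n+2-r,\]
so $r\le 1$; this proves (i). Once $r=1$ is known, the two upper bounds sum to exactly $n+1$, so both must be saturated, giving in particular $\mathfrak{S}(f_k)=k$, which is (ii). The closing ``consequently'' clause is the contrapositive of (i): any everywhere-nonzero eigenfunction forces its eigenvalue to be simple, so an eigenfunction of a multiple eigenvalue must vanish somewhere.

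The only step requiring care is choosing the sharpest index when applying Theorem~\ref{Thm:upper bound} to $-M$: labelling $-\lambda_k$ at its nominal position $n-k+1$ degrades the combined bound to the tautology $n+1\le n+1$, from which simplicity cannot be read off. One must instead use the smallest index in the equal block, namely $n-k-r+2$, which is what converts the duality identity into a genuine constraint on $r$. Beyond this bookkeeping, the argument is a direct synthesis of Theorems~\ref{Thm:duality} and \ref{Thm:upper bound}, and I do not anticipate further substantive obstacles.
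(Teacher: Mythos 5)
Your proposal is correct and follows essentially the same route as the paper: the duality identity $\mathfrak{S}(f_k)+\overline{\mathfrak{S}}(f_k)=n+1$ from Theorem~\ref{Thm:duality}, the weak nodal domain bound of Theorem~\ref{Thm:upper bound} applied to both $M$ and $-M$ (using that $\mathfrak{S}=\mathfrak{W}$ for a nowhere-zero function), and the observation that $-\lambda_k$ must be indexed at the bottom of its equal block, $n-k-r+2$, to extract $r\le 1$. Your index bookkeeping matches the paper's exactly, so there is nothing to add.
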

Corollary \ref{cor:fiedler} is due to Fiedler \cite[(2,5) Corollary, (2,6) Corollary]{Fiedler75} and B\i y\i ko\u{g}lu \cite[Theorem 2]{B03}. We give an alternative proof here as a consequence of Theorem \ref{Thm:upper bound} and the Theorem \ref{Thm:duality}.
\begin{proof}
By Theorem \ref{Thm:duality}, we have
\begin{equation}\label{eq:1}
\mathfrak{S}(f_k)+\overline{\mathfrak{S}}(f_k)=n+1.
\end{equation}
Since $f_k$ has no zeros, we have $\mathfrak{S}(f_k)=\mathfrak{W}(f_k)$. By Theorem \ref{Thm:upper bound}, we obtain
\begin{equation}\label{eq:2}
\mathfrak{S}(f_k)\leq k.
\end{equation}
Without loss of generaliy, let $\lambda_k$ be the eigenvalues of $M$ with multiplicity $r$ such that
\[\lambda_1\leq\cdots\leq\lambda_{k-1}<\lambda_k=\cdots=\lambda_{k+r-1}<\lambda_{k+r}\leq\cdots\leq \lambda_n.\]

	Observe that the signed graph induced by $-M$ is the negation $-\Gamma=(G,-\sigma)$ of $\Gamma=(G,\sigma)$.
	The eigenvalues of $-M$ can be listed accordingly as below
		\[-\lambda_{n}\leq\cdots\leq-\lambda_{k+r}<-\lambda_{k+r-1}=\cdots=-\lambda_{k}<-\lambda_{k-1}\leq\cdots\leq-\lambda_{1}.\]
Notice that $f_k$ is an eigenfunction of $-M$ corresponding to $-\lambda_{k}$.
Similarly, we derive
\begin{equation}\label{eq:3}
\overline{\mathfrak{S}}(f_k)\leq n-(k+r-1)+1.
\end{equation}
Combining the estimates (\ref{eq:1}), (\ref{eq:2}), and (\ref{eq:3}) yields
\[n+1=\mathfrak{S}(f_k)+\overline{\mathfrak{S}}(f_k)\leq n-r+2.\]
This tells that $r=1$. Therefore, the inequalities in (\ref{eq:2}) and (\ref{eq:3}) are both equality. In particular, we have $\mathfrak{S}(f_k)=k$.
\end{proof}

\section{A reformulation of Fiedler's approach on acyclic matrices}\label{Section:Fiedler}
Fiedler \cite{Fiedler75} studied the eigenvectors of \emph{acyclic matrices}, i.e., symmetric matrices whose induced graph does not contain any cycle.  In this section, we explain that Fiedler's result can be reformulated as estimates for the number of \emph{strong nodal domains} of eigenfunctions for acyclic matrices in our terminology.

Observe that the induced signed graph $\Gamma=(T,\sigma)$ of any acyclic matrix is always balanced, since the graph $T$ has no cycles. That is, $T$ is a forest.

We first quote the main theorem of Fiedler \cite[(2,3) Theorem]{Fiedler75} below.
\begin{theorem}[\cite{Fiedler75}]\label{thm:Fiedler75}
Let $A=(a_{ik})$ be an $n\times n$ acyclic matrix. Let $y=(y_i)$ be an eigenvector of $A$ corresponding to an eigenvalue $\lambda$. Denote by $\omega^+$ and $\omega^-$, respectively, the number of eigenvalues of $A$ greater than and smaller than $\lambda$, and let $\omega^{(0)}$ be the multiplicity of $\lambda$.

Let there be first no "isolated" zero coordinate of $y$, i.e. coordinate $y_k=0$ such that $a_{kj}y_j=0$ for all $j$. Then
\begin{equation}\label{eq:4}
\omega^+=a^++m,\,\,\omega^-=a^-+m, \,\,\omega^{(0)}=n-\omega^+-\omega^-
\end{equation}
where $m$ is the number of zero coordinates of $y$, $a^+$ is the number of those (unordered) pairs $(i,k), \,i\neq k$, for which
\[a_{ik}y_iy_k<0\]
and $a^-$ is the number of such pairs $(i,k),\, i\neq k$, for which
\[a_{ik}y_iy_k>0.\]
If there are isolated zero coordinates of $y$, if $\mathcal{F}$ is the set of indices corresponding to such coordinates and $\widetilde{A}$ the matrix obtained from $A$ by deleting all rows and columns with indices from $\mathcal{F}$ then the numbers $\omega^+, \omega^-$ and $\omega^{(0)}$ satisfy
\begin{equation}\label{eq:5}
\omega^+=\tilde{\omega}^++c_1,\,\,\omega^-=\tilde{\omega}^-+c_2,\,\,\omega^{(0)}=\tilde{\omega}^{(0)}+c_0
\end{equation}
where $\tilde{\omega}$ are corresponding numbers of $\widetilde{A}$ and $c_0,c_1,c_2$ nonnegative integers such that
\begin{equation}\label{eq:6}
c_0+c_1+c_2=|\mathcal{F}|,
\end{equation}
the number of elements in $\mathcal{F}$.
\end{theorem}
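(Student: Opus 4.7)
The plan is to recast Fiedler's counting quantities as strong nodal domain counts on the induced signed graph $\Gamma=(T,\sigma)$ and on its negation $-\Gamma$, and then read off the theorem as simultaneous saturation of our upper bound (Theorem \ref{Thm:upper bound}) applied to both $A$ and $-A$, pinned down by the forest duality identity (Theorem \ref{Thm:duality}). Since $T$ has no cycle, $\Gamma$ is balanced, and the relation $\sigma_{ik}=-\operatorname{sgn}(a_{ik})$ identifies Fiedler's $a^+$ with the number of $S$-walk edges of $y$ on $\Gamma$ and $a^-$ with that on $-\Gamma$; because $T$ itself is a forest, Lemma \ref{lemma:strong} yields
\[
\mathfrak{S}(y)=n-m-a^+,\qquad \overline{\mathfrak{S}}(y)=n-m-a^-.
\]
Writing $\lambda=\lambda_k=\cdots=\lambda_{k+r-1}$ so that $(\omega^-,\omega^{(0)},\omega^+)=(k-1,r,n-k-r+1)$, the Case 1 identities $\omega^\pm=a^\pm+m$ become precisely $\mathfrak{S}(y)=k+r-1$ and $\overline{\mathfrak{S}}(y)=\omega^++\omega^{(0)}$.

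For Case 1, Theorem \ref{Thm:upper bound} applied to $A$ and $-A$ immediately gives the one-sided bounds $\omega^+\leq a^++m$ and $\omega^-\leq a^-+m$, and combining these with the combinatorial identity $a^++a^-=(n-c)-e_0$ forces $\omega^{(0)}\geq c+e_0-2m$; Theorem \ref{Thm:duality} reads $\mathfrak{S}(y)+\overline{\mathfrak{S}}(y)=n+c-2m+e_0$, so both upper bounds become equalities exactly when $\omega^{(0)}=c+e_0-2m$. It therefore suffices to establish the \emph{reverse} multiplicity bound, which I propose to prove by induction on $m$. The base $m=0$ (hence $e_0=0$) is Corollary \ref{cor:fiedler} applied component-by-component on the forest $T$, giving $\omega^{(0)}=c$. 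For the inductive step $m\geq 1$, observe that the eigenvalue equation $\sum_{u\sim v}a_{vu}y_u=0$ at any zero $v$ forces the number of nonzero neighbours of $v$ to be either $0$ or at least $2$; under the no-isolated-zero hypothesis this yields at least two nonzero neighbours, lying in distinct components of $T\setminus v$. Invoke the classical Parter--Wiener theorem to pick a vertex $v$ which is a common zero of the whole eigenspace $E_\lambda(A)$ (in particular $y_v=0$) and such that $m_{A(v)}(\lambda)=m_A(\lambda)+1$; then $v$ is automatically non-isolated, and $y':=y|_{V\setminus v}$ is an eigenvector of $A(v)$ for $\lambda$ with no isolated zeros (any new isolated zero of $y'$ would have already been isolated for $y$). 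The parameters update to $c'=c+\deg(v)-1$, $e_0'=e_0-\deg(v)$, $m'=m-1$, so $c'+e_0'-2m'=c+e_0-2m+1$, and the inductive hypothesis combined with Parter's identity gives $m_A(\lambda)=m_{A(v)}(\lambda)-1=c+e_0-2m$, finishing Case 1.

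For Case 2, every $k\in\mathcal{F}$ is a zero of $y$ all of whose neighbours in $T$ are also zero, so $\tilde{y}=y|_{V\setminus\mathcal{F}}$ is an eigenvector of $\widetilde{A}$ for $\lambda$, and an identical contradiction argument shows $\tilde{y}$ has no isolated zeros in $\widetilde{A}$. Applying Case 1 to $(\widetilde{A},\tilde{y})$ determines $\tilde\omega^\pm$ and $\tilde\omega^{(0)}$. Since $\widetilde{A}$ is the principal submatrix of $A$ obtained by deleting the $|\mathcal{F}|$ rows and columns indexed by $\mathcal{F}$, Cauchy interlacing distributes the $|\mathcal{F}|$ removed eigenvalues among those strictly below, equal to, and strictly above $\lambda$, producing nonnegative integers $c_2,c_0,c_1$ with $c_0+c_1+c_2=|\mathcal{F}|$ and $\omega^{\pm}=\tilde\omega^{\pm}+c_{1,2}$, which is exactly (\ref{eq:5})--(\ref{eq:6}).

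The most delicate ingredient is the inductive step of Case 1: importing Parter--Wiener and verifying that the vertex it furnishes is always a \emph{non-isolated} zero of our specific eigenvector $y$, so that the induction hypothesis genuinely applies to $(A(v),y')$. This hinges on the combination of (i) Parter's conclusion that $v$ is a common zero of $E_\lambda(A)$, and (ii) the no-isolated-zero hypothesis on $y$, which immediately promotes any zero of $y$ to a vertex with at least two nonzero neighbours and thus makes Parter's vertex admissible in the recursion.
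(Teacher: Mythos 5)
The paper does not prove Theorem \ref{thm:Fiedler75} at all --- it is imported verbatim from Fiedler and then only \emph{translated} in Theorem \ref{thm:reformulation} --- so there is no internal proof to compare against; I am therefore assessing your argument on its own terms. Your Case 1 is essentially sound and is a genuinely nice route: the dictionary $\mathfrak{S}(y)=n-m-a^+$, $\overline{\mathfrak{S}}(y)=n-m-a^-$ via Lemma \ref{lemma:strong}, the two one-sided bounds from Theorem \ref{Thm:upper bound} applied to $A$ and $-A$, and the reduction (via Theorem \ref{Thm:duality}) to the single reverse inequality $\omega^{(0)}\leq c+e_0-2m$ are all correct, and there is no circularity since Corollary \ref{cor:fiedler} is proved in the paper from Theorems \ref{Thm:upper bound} and \ref{Thm:duality} only. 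The induction on $m$ does lean on the external Parter--Wiener theorem together with the standard facts that a Parter vertex is a common zero of the eigenspace and that the hypothesis $\lambda\in\sigma(A_1(u))$ is available once $y$ has a zero; these imports are classical and independent of Fiedler's theorem, and your verification that the restricted eigenvector $y'$ again has no isolated zeros (because the witnessing neighbour $j$ of any zero satisfies $y_j\neq 0$, hence $j\neq v$) is correct.

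The genuine gap is in Case 2, at the claim that Cauchy interlacing ``distributes the $|\mathcal{F}|$ removed eigenvalues'' so as to make all three of $c_0,c_1,c_2$ nonnegative. Interlacing for a principal submatrix of corank $|\mathcal{F}|$ gives $\lambda_i\leq\mu_i\leq\lambda_{i+|\mathcal{F}|}$, which yields $\tilde{\omega}^+\leq\omega^+$ and $\tilde{\omega}^-\leq\omega^-$ (so $c_1,c_2\geq 0$) and the trivial sum identity (\ref{eq:6}); it does \emph{not} yield $c_0\geq 0$, i.e.\ $\tilde{\omega}^{(0)}\leq\omega^{(0)}$, because deleting rows and columns can strictly \emph{increase} the multiplicity of an eigenvalue --- this is exactly the Parter phenomenon you exploit in Case 1, where deleting a single zero vertex raises the multiplicity by one. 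Nor does the inequality $\omega^{(0)}\geq c+e_0-2m$ that you already have close the gap: applying your Case 1 formula to $\widetilde{A}$ gives $\tilde{\omega}^{(0)}=\tilde{c}+\tilde{e}_0-2\tilde{m}=c+e_0-2m+|\mathcal{F}|$, which exceeds your lower bound on $\omega^{(0)}$ by exactly $|\mathcal{F}|$. So the statement $r\geq\tilde{r}$ (which the paper records explicitly in Theorem \ref{thm:reformulation} as part of what Fiedler's theorem delivers) is the one piece of (\ref{eq:5})--(\ref{eq:6}) your argument does not establish; you need a separate argument showing that no vertex of $\mathcal{F}$ acts as a net Parter vertex for $\lambda$ in this deletion --- for instance by controlling how the positive and negative indices of inertia of $A-\lambda I$ change when an isolated zero is removed (note that Lemma \ref{lemma:Fiedler2} is unavailable here precisely because all neighbours of an isolated zero carry the value $0$). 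Everything else in Case 2 (that $\tilde{y}$ is a nonzero eigenvector of $\widetilde{A}$ with no isolated zeros) is fine.
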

With our terminology, we reformulate Theorem \ref{thm:Fiedler75} as below.

\begin{theorem}\label{thm:reformulation}
	Let $A=(a_{ik})$ be an $n\times n$ acyclic matrix and $\Gamma=(T,\sigma)$ where $T=(V,E)$ be the induced signed graph. Let $\lambda_{k}$ be the $k$-th eigenvalue of $M$ with multiplicity $r$ and eigenfunction $f_k$ such that
\[\lambda_1\leq\cdots\leq\lambda_{k-1}<\lambda_k=\cdots=\lambda_{k+r-1}<\lambda_{k+r}\leq\cdots\leq \lambda_n.\]
We denote by
\begin{equation}\label{eq:Fiedlerset}
\mathcal{F}:=\{x\in V: f_k(x)=0 \,\,\text{and}\,\,f_k(y)=0\,\,\text{for all}\,\,y\sim x\}
\end{equation} the set of Fiedler's isolated zeros of $f_k$, and by $\overline{\mathfrak{S}}(f_k)$ the number of strong nodal domains of $f_k$ on the signed graph $-\Gamma=(T,-\sigma)$. Let $\tilde{r}$ be the multiplicity of $\lambda_k$ as an eigenvalue of the matrix $\widetilde{A}$ obtained from $A$ by deleting all rows and columns with indices from $\mathcal{F}$.
Then we have $r\geq \tilde{r}$ and
  \begin{equation}\label{eq:55}
  \mathfrak{S}(f_k)= k+r-1-|\mathcal{F}|+c_1,\,\,\text{and}\,\,\,\overline{\mathfrak{S}}(f_k)= n-k+1-|\mathcal{F}|+c_2,
  \end{equation}
  where $c_1,c_2$ are nonnegative integers such that
  $c_1+c_2+(r-\tilde{r})=|\mathcal{F}|.$
In particular, when $\mathcal{F}=\emptyset$, we have
  \begin{equation}\label{eq:44}
  \mathfrak{S}(f_k)=k+r-1,\,\,\text{and}\,\,\,\overline{\mathfrak{S}}(f_k)=n-k+1.
  \end{equation}
\end{theorem}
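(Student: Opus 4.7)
The plan is to interpret Fiedler's combinatorial quantities $a^+$, $a^-$, $m$ and $\mathcal{F}$ in our signed-graph language, so that Theorem~\ref{thm:Fiedler75} can be invoked as a black box. First I would match the spectral data: from our labelling of eigenvalues, $\omega^+ = n - k - r + 1$, $\omega^- = k - 1$, and $\omega^{(0)} = r$. Writing $z$ for the number of zeros of $f_k$, I have $m = z$, and the set $\mathcal{F}$ defined in (\ref{eq:Fiedlerset}) coincides verbatim with Fiedler's set of isolated zeros.

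The key dictionary is the following. Since $\sigma_{xy} = -M_{xy}/|M_{xy}|$, an edge $\{x,y\}$ of $T$ satisfies $f_k(x)\sigma_{xy} f_k(y) > 0$ if and only if $M_{xy} f_k(x) f_k(y) < 0$. Thus Fiedler's number $a^+$ is precisely the number of edges of $T$ that are $S$-walks of $f_k$ on $\Gamma$, and symmetrically $a^-$ is the number of edges that are $S$-walks of $f_k$ on $-\Gamma$. Since $T$ is a forest, the $S$-walk subgraph is itself a forest and serves as its own spanning forest, so Lemma~\ref{lemma:strong} yields
\[\mathfrak{S}(f_k) = n - z - a^+, \qquad \overline{\mathfrak{S}}(f_k) = n - z - a^-.\]
In the case $\mathcal{F} = \emptyset$, equation (\ref{eq:4}) gives $a^+ = \omega^+ - m$ and $a^- = \omega^- - m$; substituting into the two identities above yields (\ref{eq:44}) immediately.

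For the general case I would pass to the restricted eigenfunction $\tilde f := f_k|_{V \setminus \mathcal{F}}$. Because $f_k$ vanishes on $\mathcal{F}$, the computation $(\widetilde A \tilde f)(x) = (A f_k)(x) = \lambda_k f_k(x)$ for $x \notin \mathcal{F}$ shows $\widetilde A \tilde f = \lambda_k \tilde f$; a short argument (any $x \notin \mathcal{F}$ with $f_k(x) = 0$ has by definition some neighbour $y$ with $A_{xy} f_k(y) \neq 0$, forcing $f_k(y)\neq 0$ and hence $y \notin \mathcal{F}$) shows $\tilde f$ has no isolated zeros with respect to the induced signed subgraph on $V \setminus \mathcal{F}$, so (\ref{eq:4}) applies to $(\widetilde A, \tilde f)$. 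Every edge of $\Gamma$ incident to $\mathcal{F}$ has zero $f_k$-product, so none contribute to $a^\pm$; this gives $\tilde a^\pm = a^\pm$ and clearly $\tilde m = m - |\mathcal{F}|$. Combining (\ref{eq:4}) applied to $\widetilde A$ with the relations (\ref{eq:5})--(\ref{eq:6}) produces
\[\omega^+ = a^+ + (m - |\mathcal{F}|) + c_1, \qquad \omega^- = a^- + (m - |\mathcal{F}|) + c_2,\]
with $c_0 = \omega^{(0)} - \tilde\omega^{(0)} = r - \tilde r \geq 0$ (whence $r \geq \tilde r$) and $c_0 + c_1 + c_2 = |\mathcal{F}|$. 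Solving for $a^\pm$ and substituting back into the two identities for $\mathfrak{S}(f_k)$ and $\overline{\mathfrak{S}}(f_k)$ yields exactly (\ref{eq:55}) together with $c_1 + c_2 + (r - \tilde r) = |\mathcal{F}|$.

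The main obstacle is essentially bookkeeping: establishing the dictionary $a^+ \leftrightarrow$ edges that are $S$-walks on $\Gamma$, $a^- \leftrightarrow$ edges that are $S$-walks on $-\Gamma$, and verifying that restriction to $V \setminus \mathcal{F}$ preserves these counts while reducing the zero-count by exactly $|\mathcal{F}|$. Once this identification and the verification that $\tilde f$ is an eigenfunction of $\widetilde A$ without isolated zeros are in place, the theorem follows by direct substitution into Fiedler's identities.
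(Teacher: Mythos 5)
Your proposal is correct and follows essentially the same route as the paper's proof: translate $\omega^{\pm}$ into the eigenvalue indices, identify $a^{+}$ and $a^{-}$ with the counts of edges that are $S$-walks on $\Gamma$ and $-\Gamma$ via Lemma \ref{lemma:strong}, settle the case $\mathcal{F}=\emptyset$ directly from (\ref{eq:4}), and then handle general $\mathcal{F}$ by restricting to $V\setminus\mathcal{F}$ and combining the previous case with (\ref{eq:5})--(\ref{eq:6}). Your explicit verification that $f_k|_{V\setminus\mathcal{F}}$ is an eigenfunction of $\widetilde{A}$ with no isolated zeros is a welcome detail the paper leaves implicit.
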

\begin{proof}We show how to derive the theorem from Fiedler's Theorem \ref{thm:Fiedler75}. By definition, we have
\begin{equation}\label{eq:omega}
  \omega^+=n-(k+r-1),\,\,\text{and}\,\,\,\omega^-=k-1.
\end{equation}
Observe that for any edge $\{i,j\}\in E$, $a_{ij}f_k(i)f_k(j)<0$ if and only if $f_k(i)\sigma_{ij}f_k(j)>0$ since $\sigma_{ij}=-a_{ij}/|a_{ij}|$. Then applying Lemma \ref{lemma:strong}, we have
\begin{equation}\label{eq:a+}
a^+=n-z-\mathfrak{S}(f_k),
\end{equation}
where $z$ is the number of zeros of $f_k$. Similarly, we obtain
\begin{equation}\label{eq:a-}
a^-=n-z-\overline{\mathfrak{S}}(f_k).
\end{equation}

If $\mathcal{F}=\emptyset$, combining (\ref{eq:4}) with (\ref{eq:omega}), (\ref{eq:a+}) and (\ref{eq:a-}) yields
\[n-(k+r-1)=n-\mathfrak{S}(f_k),\,\,\text{and}\,\,\,k-1=n-\overline{\mathfrak{S}}(f_k).\]
That is, (\ref{eq:44}) holds true.

If $\mathcal{F}\neq \emptyset$, let $\widetilde{A}$ be the matrix obtained from $A$ by deleting all rows and columns with indices from $\mathcal{F}$. Then $f_k$ restricting to the set $V\setminus \mathcal{F}$ is an eigenfunction of $\widetilde{A}$ corresponding to the eigenvalue $\lambda_k$. Suppose $\lambda_k=\mu_{\tilde{k}}$ such that all the eigenvalues of $\widetilde{A}$ can be listed as
\[\mu_1\leq\cdots\leq\mu_{\tilde{k}-1}<\mu_{\tilde{k}}=\cdots=\mu_{\tilde{k}+\tilde{r}-1}<\mu_{\tilde{k}+\tilde{r}}\leq\cdots\leq \mu_{n-|\mathcal{F}|}.\]
Then we have
\begin{equation}\label{eq:omegatilde}
\tilde{\omega}^+=n-|\mathcal{F}|-(\tilde{k}+\tilde{r}-1),\,\,\text{and}\,\,\,\tilde{\omega}^-=\tilde{k}-1.
\end{equation}
By (\ref{eq:44}), we have
\begin{equation}\label{eq:tilde}
\mathfrak{S}(f_k)=\mathfrak{S}(f_k|_{V\setminus\mathcal{F}})=\tilde{k}+\tilde{r}-1,\,\,\text{and}\,\,\,\overline{\mathfrak{S}}(f_k)=\overline{\mathfrak{S}}(f_k|_{V\setminus\mathcal{F}})=n-|\mathcal{F}|-\tilde{k}+1.
\end{equation}
Inserting (\ref{eq:omega}), (\ref{eq:omegatilde}) and (\ref{eq:tilde}) into (\ref{eq:55}) leads to
\[n-(k+r-1)=n-|\mathcal{F}|-\mathfrak{S}(f_k)+c_1,\,\,\text{and}\,\,\,k-1=n-|\mathcal{F}|-\overline{\mathfrak{S}}(f_k)+c_2,\]
which confirms (\ref{eq:55}).
\end{proof}
\begin{corollary}\label{cor:5.3}
Let $A=(a_{ik})$ be an $n\times n$ acyclic matrix and $\Gamma=(T,\sigma)$ with $T=(V,E)$ be the induced signed graph. Let $\lambda_{k}$ be the $k$-th eigenvalue of $M$ with multiplicity $r$ and eigenfunction $f_k$ such that
\[\lambda_1\leq\cdots\leq\lambda_{k-1}<\lambda_k=\cdots=\lambda_{k+r-1}<\lambda_{k+r}\leq\cdots\leq \lambda_n.\]
Denote $\mathcal{F}:=\{x\in V: f_k(x)=0 \,\,\text{and}\,\,f_k(y)=0\,\,\text{for any}\,\,y\sim x\}$. Then we have
\[k+r-1-|\mathcal{F}|\leq \mathfrak{S}(f_k)\leq k+r-1,\,\,\text{and}\,\,\,n-k+1-|\mathcal{F}|\leq\overline{\mathfrak{S}}(f_k)\leq n-k+1.\]
\end{corollary}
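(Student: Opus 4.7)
The plan is to derive Corollary \ref{cor:5.3} directly from the two identities supplied by Theorem \ref{thm:reformulation}, namely
\[
\mathfrak{S}(f_k)= k+r-1-|\mathcal{F}|+c_1,\qquad \overline{\mathfrak{S}}(f_k)= n-k+1-|\mathcal{F}|+c_2,
\]
together with the constraint $c_1+c_2+(r-\tilde{r})=|\mathcal{F}|$ and the nonnegativity statements $c_1,c_2\geq 0$ and $r\geq\tilde{r}$ that the theorem records. So, once Theorem \ref{thm:reformulation} is in hand, the corollary is essentially a pair of arithmetic observations; there is no serious obstacle.

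For the lower bounds, I would simply drop the term $c_1\geq 0$ (resp. $c_2\geq 0$) in the two identities to obtain
\[
\mathfrak{S}(f_k)\geq k+r-1-|\mathcal{F}|, \qquad \overline{\mathfrak{S}}(f_k)\geq n-k+1-|\mathcal{F}|.
\]
For the upper bounds I would use the constraint $c_1+c_2+(r-\tilde{r})=|\mathcal{F}|$ together with $c_2\geq 0$ and $r-\tilde{r}\geq 0$ to conclude $c_1\leq|\mathcal{F}|$, which plugged back into the first identity gives $\mathfrak{S}(f_k)\leq k+r-1$. Symmetrically, $c_2\leq|\mathcal{F}|$ yields $\overline{\mathfrak{S}}(f_k)\leq n-k+1$.

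As a sanity check, I would note that the upper bounds can alternatively be read off from Theorem \ref{Thm:upper bound} applied to $M$ and to $-M$: the matrix $-M$ has induced signed graph $-\Gamma$ and the eigenvalue $-\lambda_k$ sits in positions $n-k-r+2,\dots,n-k+1$ of its spectrum, so Theorem \ref{Thm:upper bound} gives $\overline{\mathfrak{S}}(f_k)\leq(n-k-r+2)+r-1=n-k+1$. This cross-check confirms the upper bounds without invoking the constraint on $c_1,c_2$, and shows consistency with the earlier general framework. The only thing that might deserve a remark is the inequality $r\geq\tilde{r}$ used above, but since it is part of the conclusion of Theorem \ref{thm:reformulation}, nothing further needs to be proved here. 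The argument therefore fits into a short displayed proof of just a few lines.
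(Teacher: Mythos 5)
Your proposal is correct and follows essentially the same route as the paper: the paper likewise deduces $0\leq c_1,c_2\leq|\mathcal{F}|$ from the constraint $c_1+c_2+(r-\tilde{r})=|\mathcal{F}|$ with $r\geq\tilde{r}$, and then reads both bounds off the identities of Theorem \ref{thm:reformulation}. Your cross-check via Theorem \ref{Thm:upper bound} applied to $M$ and $-M$ is a valid independent confirmation of the upper bounds (it is the same computation used in the proof of Corollary \ref{cor:fiedler}), but it is not needed.
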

\begin{proof}
Since $c_1+c_2+(r-\tilde{r})=|\mathcal{F}|$ and $r\geq \tilde{r}$, we have $0\leq c_1, c_2\leq |\mathcal{F}|$. Then, the Corollary is an immediately consequence of Theorem \ref{thm:reformulation}.
\end{proof}
Combining with Theorem \ref{Thm:duality}, we derive the following consequence.
\begin{corollary}\label{cor:multiplicitytilde}
Let $A=(a_{ik})$ be an $n\times n$ acyclic matrix and $\Gamma=(T,\sigma)$ with $T=(V,E)$ be the induced signed graph. Let $\lambda$ be an eigenvalue of $M$ with multiplicity $r$ and eigenfunction $f: V \to \mathbb{R}$. Denote $\mathcal{F}:=\{x\in V: f(x)=0 \,\,\text{and}\,\,f(y)=0\,\,\text{for any}\,\,y\sim x\}$. Let $\tilde{r}$ be the multiplicity of $\lambda$ as an eigenvalue of $\widetilde{A}$ which is the matrix obtained from $A$ by deleting all rows and columns with indices from $\mathcal{F}$.
Then, we have
\begin{equation}
r\geq \tilde{r}=e_0-2z+c+|\mathcal{F}|,
\end{equation}
where $z=|\{x\in V: f(x)=0\}|$, $e_0=|\{\{x,y\}\in E: f(x)=0 \,\,\text{or}\,\,f(y)=0\}|$, and $c$ is the number of connected components of $T$.
\end{corollary}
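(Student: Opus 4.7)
The plan is to combine the two main ingredients the paper has already established for acyclic matrices: the reformulated Fiedler theorem (Theorem \ref{thm:reformulation}) and the forest-duality identity for strong nodal domains (Theorem \ref{Thm:duality}). Both furnish an expression for the sum $\mathfrak{S}(f)+\overline{\mathfrak{S}}(f)$, one in terms of the spectral data $(k,r,\tilde r,c_1,c_2,|\mathcal{F}|)$ and the other in purely combinatorial terms $(n,c,z,e_0)$. Equating them, after eliminating the unknown constants $c_1,c_2$ via the relation $c_1+c_2+(r-\tilde r)=|\mathcal{F}|$, should yield the desired formula for $\tilde r$ essentially by a one-line manipulation.

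Concretely, I would first choose the index $k$ so that $\lambda=\lambda_k=\cdots=\lambda_{k+r-1}$, note that $f$ restricted to $V\setminus\mathcal{F}$ is indeed an eigenfunction of $\widetilde{A}$ with eigenvalue $\lambda$ (this is immediate from the definition of $\mathcal{F}$, which guarantees that the rows and columns deleted contribute nothing to $Af=\lambda f$), and invoke Theorem \ref{thm:reformulation} to write
\[
\mathfrak{S}(f)+\overline{\mathfrak{S}}(f)=(k+r-1-|\mathcal{F}|+c_1)+(n-k+1-|\mathcal{F}|+c_2)=n+r-2|\mathcal{F}|+c_1+c_2.
\]
Substituting $c_1+c_2=|\mathcal{F}|-(r-\tilde r)$ collapses the right-hand side to $n-|\mathcal{F}|+\tilde r$. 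On the other hand, since the induced signed graph is acyclic, Theorem \ref{Thm:duality} gives $\mathfrak{S}(f)+\overline{\mathfrak{S}}(f)=n+c-2z+e_0$. Equating the two expressions and cancelling $n$ yields $\tilde r=e_0-2z+c+|\mathcal{F}|$, which is the claimed identity.

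The inequality $r\geq\tilde r$ is not something I need to work for: it is already part of the conclusion of Theorem \ref{thm:reformulation}, arising from the non-negativity of $c_0:=r-\tilde r$ in Fiedler's decomposition $c_0+c_1+c_2=|\mathcal{F}|$. I do not foresee any genuine obstacle in this argument; once the two sum-identities are lined up, the result is algebraic. The only small point worth articulating carefully is why Theorem \ref{thm:reformulation} can be applied to $f$ at all (rather than to a distinguished indexed eigenfunction $f_k$), but this is clear because the reformulated Fiedler theorem is stated for an arbitrary eigenfunction corresponding to $\lambda_k$ and the quantities $\mathfrak{S}(f)$, $\overline{\mathfrak{S}}(f)$, $|\mathcal{F}|$ in the conclusion depend only on the vanishing pattern of $f$, not on its particular position in a chosen basis.
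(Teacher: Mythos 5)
Your proposal is correct and follows exactly the paper's own argument: equate the expression $\mathfrak{S}(f)+\overline{\mathfrak{S}}(f)=n-|\mathcal{F}|+\tilde r$ obtained from Theorem \ref{thm:reformulation} (after eliminating $c_1+c_2$) with the combinatorial identity $\mathfrak{S}(f)+\overline{\mathfrak{S}}(f)=n+c-2z+e_0$ from Theorem \ref{Thm:duality}. The inequality $r\geq\tilde r$ is likewise inherited from Theorem \ref{thm:reformulation}, just as you note.
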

\begin{proof}
On one hand, Theorem \ref{thm:reformulation} implies that
\[\mathfrak{S}(f)+\overline{\mathfrak{S}}(f)=n+r-2|\mathcal{F}|+c_1+c_2=n+r-|\mathcal{F}|-(r-\tilde{r})=n-|\mathcal{F}|+\tilde{r}.\]
On the other hand, we have by Theorem \ref{Thm:duality} that
\[\mathfrak{S}(f)+\overline{\mathfrak{S}}(f)=n+c-2z+e_0.\]
Combining the above two identities leads to
$\tilde{r}=e_0-2z+c+|\mathcal{F}|.$
\end{proof}
\begin{remark}
Suppose that the eigenfunction $f$ of the acyclic matrix $A$ satisfies the following property: There are no two indices $i,j$ such that $a_{ij}\neq 0$ and $f(i)=f(j)=0$. Then we have $\mathcal{F}=\emptyset$, $e_0=\sum_{x\in V: f(x)=0}d_x$, and
\[r=\tilde{r}=\sum_{x\in V:f(x)=0}d_x-2z+c=c+\sum_{x\in V:f(x)=0}(d_x-2).\]
The above identity has been shown in \cite[(2,4) Theorem]{Fiedler75}. Our Corollary \ref{cor:multiplicitytilde} is, therefore, an extension of \cite[(2,4) Theorem]{Fiedler75}.
\end{remark}
\section{Lower bounds of the number of strong nodal domains}\label{Section:lowerbd}
We discuss in this section two lower bound estimates of the number of strong nodal domains and related applications.

We first prepare some notations. A sequence $\{x_i\}_{i=1}^k$ of vertices in a signed graph $\Gamma=(G,\sigma)$ is called \emph{a strong nodal domain cycle} (S-cycle for short) of a function $f$ on $\Gamma$ if it is an S-walk of $f$ and $x_k=x_1$.
\begin{definition}
Let $G=(V,E)$ be a graph. We define \[\ell(G):=|E|-|V|+c(G),\] where $c(G)$ is the number of connected components of $G$. Let $\Gamma=(G,\sigma)$ be a signed graph and $f: V\to \mathbb{R}$ be a function. Let $H$ be a graph whose vertex set $V(H)=V$ and edge set $E(H):=\{\{x,y\}\in E: f(x)\sigma_{xy}f(y)>0\}$. We define
\[\ell_+(G,\sigma,f):=|E(H)|-|V(H)|+c(H)\]
where $c(H)$ is the the number of connected components of $H$.
\end{definition}
\begin{remark}
\begin{itemize}
  \item [(i)] The number $\ell(G)$ is the minimal number of edges that need to be removed from $G$ in order to turn it into a forest. It is the dimension of the cycle space of $G$ \cite[Corollary 1.33]{BL95}.
  \item [(ii)] The number of $\ell_+(G,\sigma,f)$ is the dimension of the vector space of $S$-cycles of the function $f$ on the singed graph $(G,\sigma)$.
\end{itemize}
\end{remark}

\begin{definition}
Let $G=(V,E)$ be a graph. A vertex $x\in V$ is called a \emph{tree-like vertex} if removing $x$ and its incident edges from $G$ increases the number of connected components by $d_x-1$.
\end{definition}
All vertices in a forest are tree-like. Moreover, we have the observations below.
\begin{proposition}\label{prop:treelike}
The tree-like vertices have the following properties:
\begin{itemize}
  \item [(i)] Let $x$ be a tree-like vertex in a graph $G=(V,E)$. Let $G\setminus\{y\}$ be a graph obtained from $G$ by removing any vertex $y\neq x$ and its incident edges. Then $x$ is still a tree-like vertex in the graph $G\setminus\{y\}$.
  \item [(ii)] Let $G'$ be the induced subgraph of $G=(V,E)$ on the set $V\setminus Y$ where $Y$ is a set of tree-like vertices. Then we have $\ell(G')=\ell(G)$.
\end{itemize}
\end{proposition}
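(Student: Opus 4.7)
The plan is to first recast the tree-like condition in a more workable form: a vertex $x$ is tree-like in $G$ if and only if the $d_x$ neighbors of $x$ lie in $d_x$ pairwise distinct connected components of $G\setminus\{x\}$. This equivalence follows directly from the definition, since removing $x$ leaves the components of $G$ not containing $x$ intact and splits the component containing $x$ into $k$ pieces, giving $c(G\setminus\{x\})=c(G)-1+k$; the defining identity $c(G\setminus\{x\})=c(G)+d_x-1$ is then equivalent to $k=d_x$, i.e., each neighbor of $x$ ends up in its own component. (Equivalently, $x$ is tree-like iff it lies on no cycle of $G$, but the component-based reformulation is what I will actually use.)

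For part (i), I would split on whether $y$ is adjacent to $x$ in $G$. In either case the neighborhood of $x$ in $G\setminus\{y\}$ is a subset of $N_G(x)$ (all of it when $y\not\sim x$, and $N_G(x)\setminus\{y\}$ when $y\sim x$), and by the characterization above its elements already lie in distinct components of $G\setminus\{x\}$. Since further deleting the single vertex $y$ from $G\setminus\{x\}$ can only split components and never merge them, these neighbors remain in distinct components of $(G\setminus\{y\})\setminus\{x\}$. Hence $x$ meets the reformulated tree-like condition in $G\setminus\{y\}$.

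For part (ii), I would argue by induction on $|Y|$. The base case $|Y|=0$ is trivial, and the key step is the single-vertex removal: if $x\in V$ is tree-like in $G$, a direct count gives
\begin{equation*}
\ell(G\setminus\{x\})=(|E|-d_x)-(|V|-1)+(c(G)+d_x-1)=|E|-|V|+c(G)=\ell(G).
\end{equation*}
For the induction step I would pick any $x\in Y$ and apply this identity; part (i), applied iteratively, guarantees that every remaining vertex of $Y\setminus\{x\}$ is still tree-like in $G\setminus\{x\}$, so the inductive hypothesis applied to the pair $(G\setminus\{x\},\,Y\setminus\{x\})$ concludes the argument.

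The only mild subtlety — hardly an obstacle — lies in the case $y\sim x$ of part (i), where the degree of $x$ drops by one upon deletion of $y$; the component-based characterization handles this transparently since we correspondingly have one fewer neighbor to place in distinct components. Apart from this bookkeeping point, the proof is essentially an accounting exercise with the formula $\ell=|E|-|V|+c$.
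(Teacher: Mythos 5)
Your proof is correct and follows essentially the same route as the paper: part (ii) is verified by the identical count $\ell(G\setminus\{x\})=(|E|-d_x)-(|V|-1)+(c(G)+d_x-1)=\ell(G)$ iterated via part (i). The only cosmetic difference is in part (i), where you use the characterization ``the $d_x$ neighbors of $x$ lie in $d_x$ distinct components of $G\setminus\{x\}$'' (preserved because vertex deletion never merges components), whereas the paper uses the equivalent characterization ``$x$ lies on no cycle'' (preserved because vertex deletion never creates cycles); both are immediate and the arguments are interchangeable.
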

\begin{proof}
The property (i) follows directly from the observation that a vertex is tree-like if and only if it belongs to no cycle. Moreover, for any $y\in Y$, we have
\[\ell(G\setminus\{y\})=(|E|-d_y)-(|V|-1)+(c(G)+d_y-1)=\ell(G).\]
By (i), we can apply the above argument iteratively to conclude (ii).
\end{proof}

Next, we define the following particular set of zeros of a function on a graph.
\begin{definition}\label{def:Fiedler zero set}
Let $G=(V,E)$ be a graph and $f:V\to \mathbb{R}$ be a function. We define the \emph{Fiedler zero set} of $f$ on $G$ as below:
\[\mathcal{F}(G,f):=\{x\in V: f(x)=0, \,\,\text{and either}\,\, f(y)=0\,\,\text{for all}\,\,y\sim x,\,\,\text{or}\,\,x\,\,\text{is not tree-like}\}.\]
\end{definition}
When $G$ is a forest, the set $\mathcal{F}(G,f)$ coincides with Fiedler's set of isolated zeros (\ref{eq:Fiedlerset}).
We denote by $\mathcal{F}^c(G,f)$ the complement of $\mathcal{F}(G,f)$ in the zero set of $f$, i.e.,
 \[\mathcal{F}^c(G,f):=\{x\in V: f(x)=0, x\,\,\text{is tree-like and there exists}\,\,y\sim x\,\,\text{such that}\,\,f(y)\neq 0\}.\]

Now we are ready to state our first lower bound estimate.
\begin{theorem}\label{Thm:lower bound}
	Let $M$ be an $n\times n$ symmetric matrix and $\Gamma=(G,\sigma)$ be the induced signed graph. Let $\lambda_{k}$ be the $k$-th eigenvalue of $M$ with multiplicity $r$ and eigenfunction $f_k$ such that
\[\lambda_1\leq\cdots\leq\lambda_{k-1}<\lambda_k=\cdots=\lambda_{k+r-1}<\lambda_{k+r}\leq\cdots\leq \lambda_n.\]
Then we have
\[\mathfrak{S}(f_k)\geq k+r-1-\ell'+\ell_+-|\mathcal{F}|,\]
where $\mathcal{F}=\mathcal{F}(G,f_k)$ is the Fielder zero set, $\ell_+=\ell_+(G,\sigma,f_k)$ is the dimension of the S-cycle space of $G$, and $\ell'=\ell(G')$ is the dimension of the cycles space of $G'$, where $G'$ is the induced subgraph of $G$ on the set of nonzeros $V\setminus\{x: f_k(x)=0\}$.
\end{theorem}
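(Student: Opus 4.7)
The plan is to bound the positive inertia $n_+(\widetilde{M})$ of the auxiliary matrix $\widetilde{M} := D(f_k)(M - \lambda_k I)D(f_k)$ from two sides. By Corollary~\ref{cor:XY},
\[
\langle h,\, \widetilde{M} h \rangle = \sum_{\{x,y\} \in E} (-M_{xy})\, f_k(x)\, f_k(y)\, (h(x) - h(y))^2,
\]
which depends only on $h|_\Omega$. On $\mathbb{R}^\Omega$ the form factorises as $\widetilde{M}|_\Omega = B W B^T$, where $B$ is a signed incidence matrix of $G' = (\Omega, E(G'))$ and $W = \diag(w_e)$ is the diagonal matrix of edge weights $w_e = (-M_e) f_k(x_e) f_k(y_e)$, whose positive inertia is $n_+(W) = |E(S)|$. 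Since $\operatorname{Im}(B^T)$ has dimension $|\Omega| - c(G')$, Sylvester's law applied to this factorisation gives the lower bound
\[
n_+(\widetilde{M}) = n_+(\widetilde{M}|_\Omega) \geq n_+(W) - \bigl(|E(G')| - \dim \operatorname{Im}(B^T)\bigr) = |E(S)| - \ell'.
\]

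For the upper bound, Sylvester applied to the congruence $\widetilde{M} = D(f_k)(M - \lambda_k I)D(f_k)$ identifies $n_+(\widetilde{M})$ with $n_+((M - \lambda_k I)|_{\mathbb{R}^\Omega})$. Naive Cauchy interlacing for the codimension-$z$ subspace $\mathbb{R}^\Omega \subset \mathbb{R}^V$ yields only $n_+(\widetilde{M}) \leq n - k - r + 1$, which combined with $|E(S)| = \ell_+ + |\Omega| - \mathfrak{S}(f_k)$ produces merely the weaker bound $\mathfrak{S}(f_k) \geq k + r - 1 - \ell' + \ell_+ - z$, off by $|\mathcal{F}^c|$ from the target. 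To sharpen, I would prove the key \emph{nullity estimate}
\[
n_0\bigl((M - \lambda_k I)|_U\bigr) \geq r + |\mathcal{F}^c|, \qquad U := \mathbb{R}^{V \setminus \mathcal{F}^c}.
\]
Together with the Cauchy interlacing bound $n_-((M - \lambda_k I)|_U) \geq k - 1 - |\mathcal{F}^c|$, this forces $n_+((M - \lambda_k I)|_U) \leq n - k - r + 1 - |\mathcal{F}^c|$, and a further restriction to $\mathbb{R}^\Omega \subset U$ transfers the same bound to $n_+(\widetilde{M})$. Substituting back, together with $|\Omega| = n - |\mathcal{F}| - |\mathcal{F}^c|$, yields the desired $\mathfrak{S}(f_k) \geq k + r - 1 - \ell' + \ell_+ - |\mathcal{F}|$.

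The main obstacle is the nullity estimate, which I would prove by induction on $|\mathcal{F}^c|$: pick a tree-like zero $x \in \mathcal{F}^c$. Since $x$ lies on no cycle of $G$, its removal splits $G$ into the connected components $C_1, \ldots, C_{d_x}$ of $G \setminus \{x\}$ with exactly one neighbor $y_i$ of $x$ in each $C_i$, and the submatrix $A' := (M - \lambda_k I)|_{V \setminus \{x\}}$ becomes block-diagonal across the $C_i$. Each block $A_i$ has $f_k|_{C_i}$ in its kernel: for $y \in C_i$ the identity $\sum_{z \in V} (M - \lambda_k I)_{yz}\, f_k(z) = 0$ reduces to $\sum_{z \in C_i} (M - \lambda_k I)_{yz}\, f_k(z) + M_{yx}\, f_k(x) = 0$, and $f_k(x) = 0$. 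A Schur complement computation at the deleted row/column $x$ shows that the rank drops by exactly $2$ iff the coupling vector $\vec{b} = (M_{xy})_{y \ne x}$ fails to lie in $\operatorname{Im}(A')$; by the block decomposition this reduces to showing $\langle \vec{b}|_{C_i},\, f_k|_{C_i} \rangle = M_{xy_i}\, f_k(y_i) \neq 0$ for at least one $i$, which holds precisely because $x \in \mathcal{F}^c$ has a nonzero neighbor. Iterating over $\mathcal{F}^c$, and using Proposition~\ref{prop:treelike}(i) to ensure the tree-like property is preserved under successive removals, yields the estimate. This parallels the acyclic case $\ell' = \ell_+ = 0$ of the theorem, which recovers Fiedler's results as in Corollary~\ref{cor:multiplicitytilde}.
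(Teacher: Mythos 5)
Your proposal is correct and follows essentially the same route as the paper: the congruence $D(f_k)(M-\lambda_k I)D(f_k)$ with the cycle-space corrections $\ell'$ and $\ell_+$ (your incidence-matrix factorisation $BWB^T$ is the paper's Lemma \ref{Lemma:5.3} in different clothing), an induction over the tree-like zeros in $\mathcal{F}^c$ using the unique nonzero neighbour to force the bordered rank to jump by $2$ (your Schur-complement/nullity bookkeeping is equivalent to the paper's positive-inertia identity via Fiedler's Lemma \ref{lemma:Fiedler2}), and Cauchy interlacing to absorb $\mathcal{F}$. The differences are only in bookkeeping (nullity versus positive index of inertia, and restricting from $V\setminus\mathcal{F}^c$ down to $\Omega$ rather than deleting $\mathcal{F}$ first), so the argument goes through as written.
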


\begin{remark}The above lower bound estimate is an extension of previous works in \cite{Fiedler75,B03,Berkolaiko08,XY12,Mohammadian16} with improvements. Xu and Yau \cite[Theorem 1.3]{XY12} have shown for generalized Laplacians that $\mathfrak{S}(f_k)\geq k+r-\ell-z$, where $\ell=\ell(G)$ and $z=|\mathcal{F}|+|\mathcal{F}^c|$ is the total number of zeros. Since $\ell'\leq \ell$, $\ell_+\geq 0$ and $z\geq |\mathcal{F}|$, Theorem \ref{Thm:lower bound} improves Xu and Yau's estimate. Moreover, when $M$ is acyclic, Theorem \ref{Thm:lower bound} reduce to the lower bound estimates in Corollary \ref{cor:5.3}.
\end{remark}
The proof is built upon techniques developed in \cite{Fiedler75} and \cite{XY12}.

Let us prepare four Lemmas. We first recall the following results of Fiedler \cite{Fiedler75}.
\begin{lemma}\cite[(1,8)]{Fiedler75}\label{Lemma:Fiedler}
	Let $T$ be a tree with the set of vertices $\{1,\ldots,n\}$. Then the $n-1$ linear forms $x_i-x_j$, where $\{i,j\},\,i<j$, are edges of $T$, are linearly indepenent.
\end{lemma}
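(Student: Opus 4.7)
The plan is to prove this by a short induction on $n$, exploiting the basic fact that every finite tree with at least two vertices has a leaf. The base case $n=1$ is vacuous (no edges, no forms), and $n=2$ is trivial since $x_1-x_2$ is a nonzero linear form.

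For the inductive step, assume the claim holds for all trees on fewer than $n$ vertices and let $T$ be a tree on $n\geq 3$ vertices. Pick a leaf $v$ of $T$ and let $w$ be its unique neighbour. Suppose we have a linear dependence
\[
\sum_{\{i,j\}\in E(T),\; i<j} c_{ij}\,(x_i - x_j) \;=\; 0.
\]
Because $v$ is a leaf, the variable $x_v$ appears in exactly one of the forms in this sum, namely the one attached to the edge $\{v,w\}$. Reading off the coefficient of $x_v$ on the left-hand side forces $\pm c_{vw}=0$, so $c_{vw}=0$. The remaining relation is a linear dependence among the forms associated with the edges of $T':=T\setminus\{v\}$, which is a tree on $n-1$ vertices; by the inductive hypothesis every remaining coefficient vanishes as well, finishing the induction.

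The argument has no real obstacle; the whole point is the \emph{leaf-peeling} principle, that pruning a leaf reduces the situation to a strictly smaller tree while the coefficient attached to the pruned edge is detected by a single variable. An alternative, equally quick proof interprets the $n-1$ forms as the rows of the signed incidence matrix of $T$ and appeals to the standard fact that the incidence matrix of a connected graph on $n$ vertices has rank $n-1$; but the inductive argument above is fully self-contained and the one I would write up.
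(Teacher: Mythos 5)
Your proof is correct. The paper does not prove this lemma at all --- it is simply quoted from Fiedler's 1975 paper --- so there is no in-paper argument to compare against; your leaf-peeling induction is the standard, fully self-contained justification, and every step (the base case, the existence of a leaf, the fact that $x_v$ occurs in exactly one form so its coefficient isolates $c_{vw}$, and the reduction to the smaller tree) is sound.
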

\begin{lemma}\cite[(1,12) Lemma]{Fiedler75}\label{lemma:Fiedler2}
Let \[A=\left(
        \begin{array}{cc}
          B & a \\
          a^T & b \\
        \end{array}
      \right)
\]
be an $n\times n$ partitioned symmetric matrix where $b\in \mathbb{R}$. If there exists $u\in \mathbb{R}^{n-1}$ such that $Bu=0$ and $a^Tu\neq 0$. Then we have
\[p_A=p_B+1,\]
where $p_A$ and $p_B$ are the positive indices of inertia of $A$ and $B$, respectively.
\end{lemma}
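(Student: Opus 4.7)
The plan is to pin down $p_A$ by computing $\dim\ker A$ in terms of $\dim\ker B$ using the given hypothesis, and then combining this with the Cauchy interlacing theorem for bordered symmetric matrices.

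First I would determine the null space of $A$. Suppose $(v,t)^T\in\ker A$ with $v\in\mathbb{R}^{n-1}$ and $t\in\mathbb{R}$. The block equations read $Bv+ta=0$ and $a^Tv+bt=0$. Pairing the first equation with $u$ and using $B^T=B$ together with $Bu=0$,
\[
0=u^T(Bv+ta)=(Bu)^Tv+t\,a^Tu=t\,a^Tu.
\]
Since $a^Tu\neq 0$ by hypothesis, this forces $t=0$, and then the original equations reduce to $Bv=0$ and $a^Tv=0$. Conversely, any $v\in\ker B$ with $a^Tv=0$ produces $(v,0)^T\in\ker A$. Thus $\ker A$ is naturally isomorphic to $\{v\in\ker B:a^Tv=0\}$. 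The linear functional $v\mapsto a^Tv$ on $\ker B$ is nonzero (witnessed by $u$), so its kernel has codimension exactly one in $\ker B$. Consequently $z_A=\dim\ker A=\dim\ker B-1=z_B-1$, where $z_A$ and $z_B$ denote the zero indices of inertia.

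Next I would apply Cauchy interlacing. Since $B$ is the leading $(n-1)\times(n-1)$ principal submatrix of $A$, the eigenvalues of $B$ interlace those of $A$, which yields the standard bounds $p_A\in\{p_B,p_B+1\}$ and $n_A\in\{n_B,n_B+1\}$ on the positive and negative indices of inertia. Combining these with the dimension identities $p_A+n_A+z_A=n$ and $p_B+n_B+z_B=n-1$, and substituting $z_A=z_B-1$, I obtain
\[
(p_A-p_B)+(n_A-n_B)=2.
\]
Since each summand on the left lies in $\{0,1\}$, both summands must equal $1$, and in particular $p_A=p_B+1$, as required.

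The only delicate point is the derivation $z_A=z_B-1$, whose validity hinges crucially on the hypothesis $a^Tu\neq 0$: this single non-vanishing scalar is exactly what guarantees that the bordering destroys one dimension of the null space (neither zero nor more). Everything else reduces to standard interlacing and elementary dimension counting, so I do not anticipate any further obstacles.
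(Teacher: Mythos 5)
Your proof is correct and complete. Note that the paper itself does not prove this lemma; it is quoted verbatim from Fiedler's 1975 paper, so there is no in-paper argument to compare against. Your route --- computing $\ker A$ via the pairing with $u$ to get $z_A = z_B - 1$, then combining Cauchy interlacing (which gives $p_A - p_B,\, n_A - n_B \in \{0,1\}$) with the inertia dimension count to force $p_A - p_B = n_A - n_B = 1$ --- is sound at every step, and in particular the codimension-one claim for the functional $v \mapsto a^T v$ restricted to $\ker B$ is correctly justified by the witness $u$. This is a clean, self-contained proof suitable to stand in for the citation.
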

The following result is the so-called \emph{interlacing theorem}. For its proof and a brief historical review, we refer to \cite[Section 2]{Haemers95}.
\begin{lemma}\label{thm:interlacing}
Let $A$ be a Hermitian matrix with eigenvalues $\lambda_{1}\leq \cdots\leq \lambda_{n}$ and $B$ be a principle submatrix of $A$ with eigenvalues $\mu_1\leq\cdots\leq\mu_m$. Then we have the inequalities \[\lambda_{i}\leq \mu_i\leq \lambda_{n-m+i},\]
for any $1\leq i \leq m$.
\end{lemma}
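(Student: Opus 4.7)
The plan is to derive both inequalities directly from the min-max principle of Lemma \ref{Lemma:minmax}, used simultaneously for $A$ and $B$. Although that lemma is stated for real symmetric matrices, its proof transfers verbatim to the Hermitian case after replacing the Euclidean inner product by the standard Hermitian one, so I will freely use both the min-max and the max-min forms for $A$ and $B$.

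Let $I \subset \{1,\ldots, n\}$ with $|I| = m$ be the index set for which $B = (A_{ij})_{i,j \in I}$, and let $W \subset \mathbb{C}^n$ denote the coordinate subspace of vectors supported on $I$, so that $\dim W = m$. For $x \in \mathbb{C}^m$, write $\widetilde{x} \in W$ for its zero-extension. Because $B$ is a principal submatrix of $A$, one has $\langle \widetilde{x}, A\widetilde{x}\rangle = \langle x, Bx\rangle$ and $\langle \widetilde{x}, \widetilde{x}\rangle = \langle x, x\rangle$, so the Rayleigh quotients of $A$ and $B$ coincide on $W \setminus \{0\}$ under this identification. This is the one and only place where the hypothesis that $B$ is a \emph{principal} submatrix enters.

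For the lower bound $\lambda_i \leq \mu_i$, the min-max form applied to $B$ inside $W$ gives
\[
\mu_i \;=\; \min_{\substack{P \subset W \\ \dim P = i}} \;\max_{0 \neq x \in P} R_A(x),
\]
and since the class of $i$-dimensional subspaces of $W$ sits inside the class of all $i$-dimensional subspaces of $\mathbb{C}^n$, shrinking the class of admissible subspaces can only increase the minimum; hence this quantity is at least $\lambda_i$ by Lemma \ref{Lemma:minmax}. Symmetrically, for the upper bound $\mu_i \leq \lambda_{n-m+i}$, apply the max-min form of Lemma \ref{Lemma:minmax} to $B$ inside $W$ at codimension $i-1$, equivalently dimension $m-i+1$:
\[
\mu_i \;=\; \max_{\substack{P \subset W \\ \dim P = m-i+1}} \;\min_{0 \neq x \in P} R_A(x).
\]
Enlarging the class of admissible subspaces from those contained in $W$ to all $(m-i+1)$-dimensional subspaces of $\mathbb{C}^n$ can only increase the maximum, and the resulting quantity equals $\lambda_{n-(m-i+1)+1} = \lambda_{n-m+i}$, again by Lemma \ref{Lemma:minmax}.

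The whole argument is dimension counting once the min-max principle is in hand; the main bookkeeping is to match dimension with codimension correctly in the two variational characterizations, and to observe that restricting the variational class to subspaces of $W$ is precisely what replaces $A$ by $B$. There is no substantive obstacle; the one point worth flagging is the (entirely routine) Hermitian upgrade of Lemma \ref{Lemma:minmax}, which the paper otherwise only uses for real symmetric matrices.
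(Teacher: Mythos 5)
Your argument is correct. Note, however, that the paper does not prove this lemma at all: it is imported by citation (the text refers the reader to Haemers's survey for the proof and its history), so there is no in-paper argument to compare against. What you give is the standard self-contained min-max proof of Cauchy interlacing, and the two key points are handled properly: (1) the identification of the Rayleigh quotient of $B$ with that of $A$ restricted to the coordinate subspace $W$, which is exactly where principality is used; and (2) the dimension/codimension bookkeeping, where restricting the variational class to subspaces of $W$ yields $\mu_i$ and comparing with the unrestricted class of the same dimension (for the lower bound) or the same dimension viewed as codimension $n-m+i-1$ in $\mathbb{C}^n$ (for the upper bound) gives $\lambda_i \leq \mu_i \leq \lambda_{n-m+i}$. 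Your remark that Lemma \ref{Lemma:minmax} needs the routine Hermitian upgrade is fair, though for the paper's purposes the lemma is only ever applied to real symmetric matrices, so the real case would already suffice.
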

The following linear algebraic lemma is essentially taken from \cite[Lemma 2.3]{XY12}.
\begin{lemma}\label{Lemma:5.3}
Consider a quadratic form \[B=\sum_{i,j=1}^na_{ij}(x_i-x_j)^2, \,\,\text{where}\,\,a_{ij}=a_{ji}\in \mathbb{R}.\]
Let $G=(V,E)$ be the graph with $V=\{1,2,\ldots,n\}$ and $E=\{\{i,j\}: a_{ij}\neq 0 \,\text{and}\, i\neq j\}$.
Let $H=(V,E(H))$ with $E(H)=\{\{i,j\}\in E: a_{ij}>0\}$ be a subgraph of $G$.
For any spanning forest $T$ of $H$, we have
	\begin{equation}\label{eq:edge}
		p\leq |E(T)|\leq |E(H)|\leq p+\ell,
	\end{equation}
where $p$ is the positive index of inertia of $B$, and $\ell=\ell(G)$.
\end{lemma}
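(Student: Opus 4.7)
The plan is to work with the matrix of the quadratic form $B$, which admits the factorization $M=CDC^{T}$, where $C$ is the $|V|\times|E|$ signed incidence matrix of $G$ (with an arbitrary orientation of each edge) and $D=\operatorname{diag}(2a_{e})_{e\in E}$ has a positive diagonal entry exactly on the edges of $H$. The middle inequality $|E(T)|\le|E(H)|$ is immediate since $T$ is a subgraph of $H$, so the substance of the lemma lies in the two outer bounds.

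For the left inequality $p\le|E(T)|$, I would split $M=P-N$, where $P=\sum_{e\in E(H)}2a_{e}c_{e}c_{e}^{T}$ and $N=\sum_{e\in E\setminus E(H)}2|a_{e}|c_{e}c_{e}^{T}$, with $c_{e}$ the column of $C$ corresponding to edge $e$. Both $P$ and $N$ are positive semidefinite (they are weighted Laplacians), so $M\preceq P$ in the Loewner order, which forces $p(M)\le p(P)=\rank(P)$. But $\rank(P)$ equals the dimension of the span of the forms $\{x_{i}-x_{j}:\{i,j\}\in E(H)\}$, and by applying Fiedler's Lemma \ref{Lemma:Fiedler} to each tree component of the spanning forest $T$ of $H$ this dimension is exactly $|V|-c(H)=|E(T)|$.

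For the right inequality $|E(H)|\le p+\ell$, I plan to invoke Sylvester's law of inertia on $M=CDC^{T}$: it identifies $p(M)$ with the positive index of the diagonal form $D$ restricted to the subspace $\operatorname{image}(C^{T})\subseteq\mathbb{R}^{|E|}$. I would then exhibit the explicit positive subspace
\[ U:=\operatorname{image}(C^{T})\cap\mathbb{R}^{E(H)}, \]
on which $D$ acts as a strictly positive diagonal. Using $\dim\operatorname{image}(C^{T})=\rank(C)=|V|-c(G)$ together with the standard bound $\dim(A\cap B)\ge\dim A+\dim B-|E|$, I obtain
\[ \dim U\ge(|V|-c(G))+|E(H)|-|E|=|E(H)|-\ell, \]
so $p(M)\ge|E(H)|-\ell$, which rearranges to the desired estimate.

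The step that needs the most care is the Sylvester reduction, because $C^{T}$ has a nontrivial kernel (spanned by the indicators of connected components of $G$). The verification is that this kernel is contained in $\ker M$ and so contributes only to the null index of $M$ rather than to $p(M)$; once $p(M)=p(D|_{\operatorname{image}(C^{T})})$ is in place, the combinatorial identity $\ell=|E|-|V|+c(G)$ slots in cleanly and both outer bounds follow.
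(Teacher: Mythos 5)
Your proposal is correct, but it takes a genuinely different route from the paper. The paper diagonalizes $B$ abstractly via Sylvester's law of inertia as $\sum_i b_i Y_i^2$ and then obtains both outer inequalities by comparing the ranks of two systems of linear equations (built from the $Y_i$ and the edge forms $x_i-x_j$, using Fiedler's Lemma \ref{Lemma:Fiedler}) and deriving a contradiction from a nonzero solution of one system that fails the other. You instead exploit the concrete factorization $M=CDC^{T}$ through the signed incidence matrix: the bound $p\leq |E(T)|$ follows from the Loewner comparison $M\preceq P$ with the positive-semidefinite weighted Laplacian $P$ supported on $E(H)$, whose rank is $|V|-c(H)=|E(T)|$; and the bound $|E(H)|\leq p+\ell$ follows from exhibiting the explicit positive subspace $\operatorname{image}(C^{T})\cap\mathbb{R}^{E(H)}$ and a dimension count using $\dim\operatorname{image}(C^{T})=|V|-c(G)$. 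Both arguments ultimately rest on the same incidence-rank fact (the forms $x_i-x_j$ over a forest are independent), but yours is direct rather than by contradiction, and it makes transparent why the defect in the upper bound is exactly the cycle-space dimension $\ell(G)$; the paper's version is more elementary in that it needs only Sylvester's law and linear independence, with no appeal to eigenvalue monotonicity under the Loewner order. Your handling of the kernel of $C^{T}$ (it lies in $\ker M$ and so contributes only to the null index) is the right way to justify $p(M)=p\bigl(D|_{\operatorname{image}(C^{T})}\bigr)$, and the remaining steps check out.
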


\begin{remark}
\begin{itemize}
  \item [(i)]When the corresponding graph $G$ is a forest, the linear forms $x_i-x_j$, where $\{i,j\}\in E$ are linearly independent by Lemma \ref{Lemma:Fiedler}. Then the fact $|E(H)|=p$ follows directly from Sylvester's law of inertia.
  \item [(ii)] Let $A$ be an $n\times n$ symmetric matrix such that $A_{ij}=-a_{ij}$ for any $i\neq j$, and $A\mathbf{1}=0$, where $\mathbf{1}=(1,\ldots,1)^T$. Then the positive index of inertia of the matrix $A$ is equal to that of the quadratic form $B$, since $B=-2\sum_{i,j=1}^na_{ij}x_ix_j$. Therefore, (\ref{eq:edge}) tells $p_A\leq |E(T)|\leq |E(H)|\leq p_A+\ell$. Notice that $2|E(H)|$ is equal to the number of negative off-diagonal entries of $A$. For the case $\ell=0$, i.e., $A$ is acyclic, this has been shown in \cite[(2,2) Theorem]{Fiedler75}.
  \item [(iii)] For our purpose, we only need the upper bound estimate in (\ref{eq:edge}). The lower bound estimate in (\ref{eq:edge}) can provide an alternative proof of the estimate $\mathfrak{S}(f_k)\leq k+r-1$ for strong nodal domains in Theorem \ref{Thm:upper bound}.
\end{itemize}

\end{remark}

\begin{proof}
For the readers' convenience, we recall the proof here.
Let the rank of $B$ be $n-r$. By Sylvester's law of inertia, the quadratic form $B$ can be reformulated as
\begin{equation}
	B=\sum_{i=1}^{n-r}b_iY_i^2,
\end{equation}
where $Y_i=\sum_{j=1}^{n}m_{ij}x_j, i=1,\ldots, n$ are independent linear forms and \[b_1>0,\ldots,b_d>0,\,b_{d+1}<0,\ldots,b_{n-r}<0.\]
We argue by contradiction.

Suppose that $|E(H)|>d+\ell$. We consider the following two systems of linear equations
\begin{equation}\label{eq:5.5}
	Y_1=0,\ldots,Y_d=0,\,x_i-x_j=0, \,\text{for any}\,\{i,j\}\in E\setminus E(H),
\end{equation}
and
\begin{equation}\label{eq:5.6}
x_i-x_j=0, \,\text{for any}\,\{i,j\}\in E.
\end{equation}
Let $c$ be the number of connected components of $G$. By our assumption $|E(H)|>d+\ell$, we observe that
 \[\text{the rank of (\ref{eq:5.5})}\,\leq d+|E|-|E(H)|\leq d+n-c+\ell-|E(H)|<n-c\]
and, by Lemma \ref{Lemma:Fiedler}, the rank of (\ref{eq:5.6}) is $n-c$.
Hence, there exists a nonzero solution $(x_1^0,....,x_n^0)$ of (\ref{eq:5.5}) which fails (\ref{eq:5.6}). Then we derive $B(x_1^0,....,x_n^0)\leq 0$ from (\ref{eq:5.5}) and $B(x_1^0,....,x_n^0)>0$ from (\ref{eq:5.6}), which is a contradiction. This shows $|E(H)|\leq d+\ell$.

Suppose that $|E(T)|<d$. Let us consider the following two systems of linear equations
\begin{equation}\label{eq:5.3}
	x_i-x_j=0, \,\text{for any }\,\{i,j\}\in E(H),\,Y_{d+1}=0,\ldots,Y_{n-r}=0
\end{equation}
and
\begin{equation}\label{eq:5.4}
     Y_1=0,\ldots,Y_{n-r}=0.
\end{equation}
Now we compare the ranks of the two systems. By Lemma \ref{Lemma:Fiedler} and our assumption that $|E(T)|<d$, we estimate
\[\text{the rank of (\ref{eq:5.3})}\,\leq |E(T)|+n-r-d<n-r.\]
On the other hand, the rank of (\ref{eq:5.4}) is $n-r$. Therefore, there exists a nonzero solution $(x_1^0,.....,x_n^0)$ of (\ref{eq:5.3}) which fails (\ref{eq:5.4}). Then we derive $B(x_1^0,.....,x_n^0)\leq 0$ from (\ref{eq:5.3}) and $B(x_1^0,.....,x_n^0)>0$ from (\ref{eq:5.4}), which is a contradiction. This shows $|E(T)|\geq d$.
\end{proof}
Now, we are ready for the proof of Theorem \ref{Thm:lower bound}. First we consider the case that $f_k$ has no zeros.
\begin{lemma}\label{lemma:1}
Let $M$, $\Gamma=(G,\sigma)$, and $f_k$ be defined as in the Theorem \ref{Thm:lower bound}. If $f_k$ is non-zero at each vertex, then we have
\[\mathfrak{S}(f_k)\geq k+r-1-\ell+\ell_+.\]
\end{lemma}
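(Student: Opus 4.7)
The plan is to translate the spectral data of $M$ at $\lambda_k$ into an inertia statement about the quadratic form on the right-hand side of Corollary \ref{cor:XY}, and then feed that into the combinatorial edge-count of Lemma \ref{Lemma:5.3}. Specialize Corollary \ref{cor:XY} by taking $g=f_k$ and set
\[
A:=D(f_k)(M-\lambda_k I)D(f_k),\qquad a_{xy}:=(-M_{xy})f_k(x)f_k(y)\ \text{ for }\{x,y\}\in E.
\]
Corollary \ref{cor:XY} says that for every $f:V\to\mathbb{R}$,
\[
\langle f,Af\rangle=\sum_{\{x,y\}\in E}a_{xy}(f(x)-f(y))^2=:B(f).
\]
Since $f_k$ is nowhere zero, $D(f_k)$ is invertible, so $A$ is congruent to $M-\lambda_k I$. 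Consequently the positive index of inertia of the quadratic form $B$ equals $p_A=p_{M-\lambda_k I}=n-(k+r-1)$, the number of eigenvalues of $M$ strictly larger than $\lambda_k$.

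Next I would apply Lemma \ref{Lemma:5.3} to $B$. The subgraph $H$ in that lemma is built from edges with $a_{xy}>0$, i.e.\ with $(-M_{xy})f_k(x)f_k(y)>0$; since $\sigma_{xy}=-M_{xy}/|M_{xy}|$, this is precisely the condition $f_k(x)\sigma_{xy}f_k(y)>0$, so $H$ coincides with the subgraph of $S$-walk edges used in Lemma \ref{lemma:strong} and in the definition of $\ell_+$. The upper estimate in Lemma \ref{Lemma:5.3} then yields
\[
|E(H)|\leq p_A+\ell=n-(k+r-1)+\ell.
\]

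Finally I would convert this edge bound into the claimed nodal-domain bound. Because $f_k$ has no zeros, Lemma \ref{lemma:strong} gives $\mathfrak{S}(f_k)=n-|E(T_S)|$, where $T_S$ is a spanning forest of $H$. The identity $|E(T_S)|=|V(H)|-c(H)=n-c(H)$ combined with the definition $\ell_+=|E(H)|-|V(H)|+c(H)$ yields $|E(T_S)|=|E(H)|-\ell_+$, hence
\[
\mathfrak{S}(f_k)=n-|E(H)|+\ell_+\geq n-\bigl(n-(k+r-1)+\ell\bigr)+\ell_+=k+r-1-\ell+\ell_+,
\]
as desired.

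The only subtle step is the inertia identification $p_A=n-(k+r-1)$ via congruence, which is exactly where the nowhere-vanishing of $f_k$ is essential; the rest is bookkeeping with Lemma \ref{lemma:strong} and the definitions of $\ell$ and $\ell_+$. I expect the main obstacle when extending this argument to Theorem \ref{Thm:lower bound} (where $f_k$ may vanish) to be that $D(f_k)$ becomes singular, so the congruence trick fails on the full space; presumably one restricts to the induced subgraph on the support of $f_k$, applies the present lemma there, and controls the defect using the interlacing Lemma \ref{thm:interlacing} together with the tree-like-vertex machinery of Proposition \ref{prop:treelike} to account for $\ell'$ and $|\mathcal{F}|$.
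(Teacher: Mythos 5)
Your proposal is correct and follows essentially the same route as the paper: identify the positive index of inertia of $D(f_k)(M-\lambda_k I)D(f_k)$ with $n-(k+r-1)$ via congruence (using that $f_k$ is nowhere zero), apply Lemma \ref{Lemma:5.3} to bound $|E(H)|$, and convert to $\mathfrak{S}(f_k)$ through Lemma \ref{lemma:strong} and the relation $|E(T_S)|=|E(H)|-\ell_+$. The paper's proof is the same argument with the same bookkeeping.
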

\begin{proof}
	Let us denote the vertex set of $G$ by $V=\{1,2,\ldots,n\}$. Define $F$ to be the diagonal matrix with $F_{ii}=f_k(i)$ for any $i\in V$, and $B:=F(M-\lambda_{k}I)F$. By Corollary \ref{cor:XY}, we get for any function $g: V\to \mathbb{R}$
\begin{equation}
	\langle g,Bg \rangle=\sum_{\{i,j\}\in E}(-M_{ij})f_k(i)f_k(j)(g(i)-g(j))^2=\sum_{\{i,j\}\in E}a_{ij}(g(i)-g(j))^2
\end{equation}
where $a_{ij}:=(-M_{ij})f_k(i)f_k(j)$, which is nonzero if and only if $\{i,j\}\in E$.\\
Next we apply Lemma \ref{Lemma:5.3} to the quadratic form $B$ and the graph $G$. Let $H$ be the subgraph of $G$ defined as in Lemma \ref{Lemma:5.3} and $T$ be a spanning forest of $H$. We observe that the edge set of $H$ is exactly the set of edges in $G$ which are S-walks of $f_k$ on the signed graph $\Gamma=(G,\sigma)$.
Then we derive from Lemma \ref{lemma:strong} that
\begin{equation}\label{eq:5.8}
	\mathfrak{S}(f_k)=n-|E(T)|.
\end{equation}
Since $F$ is nonsingular, the positive index of inertia of $B$ satisfies
\[p_B=p_{F(M-\lambda_kI)F}=p_{(M-\lambda_kI)}=n-(k+r-1).\]
Therefore, we obtain by Lemma \ref{Lemma:5.3}
\begin{equation}
	n-(k+r-1)\leq |E(T)|\leq |E(H)|\leq n-(k+r-1)+\ell.
\end{equation}
Noticing that $|E(H)|=|E(T)|+\ell(H)=|E(T)|+\ell_+(G,\sigma,f_k)$, we derive
\begin{equation}\label{eq:5.9}
	n-(k+r-1)\leq |E(T)|\leq n-(k+r-1)+\ell-\ell_+.
\end{equation}
Inserting (\ref{eq:5.8}) into (\ref{eq:5.9}) yields
\begin{equation}\label{eq:5.10}
	k+r-1-\ell+\ell_+\leq \mathfrak{S}(f_k)\leq k+r-1.
\end{equation}
This proves the lemma.
\end{proof}
Next, we consider the case that every zero of $f_k$ is not in the Fiedler zero set.
\begin{lemma}\label{lemma:2}
	Let $M,\,\Gamma=(G,\sigma)$ and $f_k$ be defined as in Theorem \ref{Thm:lower bound}.
	If all zeros of $f_k$ lie in $\mathcal{F}^c=\mathcal{F}^c(G,f_k)$. Then we have
		\[\mathfrak{S}(f_k)\geq k+r-1-\ell'+\ell_+,\]
where $\ell'=\ell(G')$ and $G'$ is the induced subgraph of $G$ on the set of nonzeros.
\end{lemma}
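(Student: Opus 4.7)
The plan is to reduce Lemma~\ref{lemma:2} to Lemma~\ref{lemma:1} by stripping the zero vertices of $f_k$ off one at a time, using Fiedler's Lemma~\ref{lemma:Fiedler2} at each step to keep track of $k+r$.

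Let $x\in\mathcal{F}^c$ be a zero of $f_k$, and write
\[M-\lambda_kI=\begin{pmatrix} M_x-\lambda_kI & a\\ a^T & b\end{pmatrix}\]
with $x$ placed in the last coordinate, where $M_x$ is the principal submatrix of $M$ on $V\setminus\{x\}$. Set $\tilde f_k:=f_k|_{V\setminus\{x\}}$. Because $f_k(x)=0$, the identity $(Mf_k)(v)=\lambda_k f_k(v)$ for $v\neq x$ reads $(M_x\tilde f_k)(v)=\lambda_k\tilde f_k(v)$, so $\tilde f_k$ is an eigenfunction of $M_x$ at $\lambda_k$. Let $k'$ and $r'$ denote the position and multiplicity of $\lambda_k$ in the spectrum of $M_x$. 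The main claim is that $k+r=k'+r'$.

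To establish this, I invoke Lemma~\ref{lemma:Fiedler2}: it suffices to exhibit $u\in\mathbb{R}^{V\setminus\{x\}}$ with $(M_x-\lambda_kI)u=0$ and $a^Tu\neq 0$. Since $x$ is tree-like it lies on no cycle of $G$, so the neighbors $y_1,\ldots,y_{d_x}$ of $x$ fall into distinct connected components $C_1,\ldots,C_{d_x}$ of $G\setminus\{x\}$. Choose $i_0$ with $f_k(y_{i_0})\neq 0$, which exists because $x\in\mathcal{F}^c$ has a nonzero neighbor, and put $u(v):=f_k(v)$ for $v\in C_{i_0}$ and $u(v):=0$ otherwise. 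The absence of edges between $C_{i_0}$ and its complement in $G\setminus\{x\}$, together with $f_k(x)=0$, yields $M_x u=\lambda_k u$ by the same direct computation that showed $M_x\tilde f_k=\lambda_k\tilde f_k$. Moreover $a^T u=M_{xy_{i_0}}f_k(y_{i_0})\neq 0$, since $y_{i_0}$ is the only neighbor of $x$ inside $C_{i_0}$. Lemma~\ref{lemma:Fiedler2} then gives $p_{M-\lambda_kI}=p_{M_x-\lambda_kI}+1$, which translates into $k+r=k'+r'$.

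Now I iterate the removal. By Proposition~\ref{prop:treelike}(i) every tree-like vertex of $G$ different from $x$ remains tree-like in $G\setminus\{x\}$, and since $x$ is itself a zero, every remaining zero of $\tilde f_k$ still has a nonzero neighbor in $G\setminus\{x\}$. Hence the hypothesis ``all zeros lie in $\mathcal{F}^c$'' is preserved, so I can continue peeling off zeros one at a time. After processing all zeros I arrive at the principal submatrix $M':=M|_{V'}$ on the nonzero set $V':=\{v\in V:f_k(v)\neq 0\}$, and repeated applications of the claim yield $k^\ast+r^\ast=k+r$, where $k^\ast,r^\ast$ are the position and multiplicity of $\lambda_k$ in the spectrum of $M'$.

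Finally, $f_k|_{V'}$ is a nowhere-zero eigenfunction of $M'$ on the signed graph $(G',\sigma|_{G'})$, so Lemma~\ref{lemma:1} gives
\[\mathfrak{S}(f_k|_{V'})\geq k^\ast+r^\ast-1-\ell(G')+\ell_+(G',\sigma|_{G'},f_k|_{V'}).\]
Strong nodal domains, S-walks, and S-cycles of $f_k$ only see nonzero vertices, so $\mathfrak{S}(f_k|_{V'})=\mathfrak{S}(f_k)$ and $\ell_+(G',\sigma|_{G'},f_k|_{V'})=\ell_+$. Combined with $\ell(G')=\ell'$ and $k^\ast+r^\ast=k+r$, this gives $\mathfrak{S}(f_k)\geq k+r-1-\ell'+\ell_+$. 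The main obstacle is the construction of the witness $u$ for Fiedler's Lemma~\ref{lemma:Fiedler2}: it is precisely at this point that the two hypotheses defining $\mathcal{F}^c$---tree-likeness of $x$ and the existence of a nonzero neighbor---combine, the former separating a nonzero ``chunk'' of $f_k$ from the rest of the graph and the latter ensuring this chunk couples nontrivially to $x$.
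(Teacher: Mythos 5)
Your proposal is correct and follows essentially the same route as the paper: the paper also reduces to the nowhere-zero case (Lemma \ref{lemma:1}) by deleting the vertices of $\mathcal{F}^c$, proving $p_{(M-\lambda_k I)}=p_{(N-\lambda_k I)}+|\mathcal{F}^c|$ by induction with exactly your witness vector (the restriction of $f_k$ to the component of $G\setminus\{x\}$ containing a nonzero neighbor) fed into Fiedler's Lemma \ref{lemma:Fiedler2}. Your explicit verification that the hypothesis ``all zeros lie in $\mathcal{F}^c$'' persists after each deletion is a point the paper leaves implicit, so no changes are needed.
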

\begin{remark}
Due to Proposition \ref{prop:treelike}, we have in the above that $\ell(G')=\ell(G)$, since all vertices in $\mathcal{F}^c$ are tree-like.
\end{remark}
\begin{proof}
	We denote the vertex set of $G$ by $V=\{1,2,\ldots,n\}$. Let us denote by $N$ the symmetric matrix obtained from $M$ by deleting all rows and columns with indices from $\mathcal{F}^c$. Then we claim that
\begin{equation}\label{eq:inertia}
p_{(M-\lambda_kI)}=p_{(N-\lambda_kI)}+|\mathcal{F}^c|,
\end{equation}
where $p_{(M-\lambda_kI)}$ and $p_{(N-\lambda_kI)}$ are the positive indices of inertia of $M-\lambda_kI$ and $N-\lambda_kI$, respectively. For ease of notation, we do not distinguish $I_n$ and $I_{n-|\mathcal{F}^c|}$.

We prove this claim by induction with respect to the number $|\mathcal{F}^c|$. When $|\mathcal{F}^c|=0$, the claim holds true. Next, we assume the claim is true when $|\mathcal{F}^c|=m-1$. We consider the case that $|\mathcal{F}^c|=m$.
	Without loss of generality, we assume $n\in \mathcal{F}^c$ and the matrix $M-\lambda_k I$ has the following form:
\[M-\lambda_k I_n=\left(
                      \begin{array}{cc}
                        M_0 & \eta \\
                        \eta^T & M_{nn}-\lambda_k \\
                      \end{array}
                    \right)
\]
with $\eta^T=(M_{n1},\ldots, M_{n(n-1)})$ and  \[M_0:=\left(
\begin{array}{cccc}
	M_1-\lambda_k I_{n_1} & 0 & \cdots &  0 \\
	0 &  M_2-\lambda_k I_{n_2} &\cdots &  0  \\
	\vdots & \vdots& \ddots & \vdots\\
	0 &  0 &\cdots &  M_h-\lambda_k I_{n_h} \\
\end{array}
\right),\]
where $h=d_n$ is the degree of $n$ and $M_i$ is an $n_i\times n_i$ symmetric matrix for each $i$.

Since $n\in \mathcal{F}^c$, there exists an index $j$ such that $M_{nj}f_k(j)\neq 0$. Without loss of generality, we assume $j\in \{1,\ldots,n_1\}$. We set
\[u:=(f_k(1),\ldots,f_k(n_1),0,\ldots,0)^T\in \mathbb{R}^{n-1}.\]
 Since the vertex $n$ is tree-like, we have $n\nsim \zeta$, for any $\zeta\in \{1,\ldots, n_1\}\setminus\{j\}$. Therefore, we derive \[\eta^Tu=\sum_{\zeta=1}^{n_1}M_{n\zeta}f_k(\zeta)=M_{nj}f_k(j)\neq 0.\]
 Moreover, we have $M_0u=0$.
  Then we can apply Lemma \ref{lemma:Fiedler2} to conclude that \[p_{(M-\lambda_kI)}=p_{M_0}+1.\] By our induction assumption, we have $p_{(M-\lambda_kI)}=p_{M_0}+1=p_{(N-\lambda_kI)}+|\mathcal{F}^c|$. That is, we prove the claim (\ref{eq:inertia}).

Let $\mu_1\leq\cdots\leq \mu_{n-|\mathcal{F}^c|}$ be the eigenvalues of $N$. We assume
\[\mu_{k'-1}<\lambda_k=\mu_{k'}=\cdots=\mu_{k'+r'-1}<\mu_{k'+r'}.\]
We observe that $p_{(M-\lambda_kI)}=n-(k+r-1)$, and $p_{(N-\lambda_kI)}=n-|\mathcal{F}^c|-(k'+r'-1)$. Then (\ref{eq:inertia}) implies
\begin{equation}\label{eq:strongInterlacing}
k+r=k'+r'.
\end{equation}
 Note that $G'$ is the induced subgraph of $N$. By definition, we have $\mathfrak{S}(f_k|_{G'})=\mathfrak{S}(f_k)$ since the set of nonzeros stays put. Applying Lemma \ref{lemma:1} and (\ref{eq:strongInterlacing}) leads to
\[k+r-1=k'+r'-1\geq \mathfrak{S}(f_k)=\mathfrak{S}(f_k|_{G'})\geq k'+r'-1-\ell'+\ell_+=k+r-1-\ell'+\ell_+.\]
This completes the proof.
\end{proof}

\begin{proof}[Proof of Theorem \ref{Thm:lower bound}]
 Restrict the function $f_k$  to the induced subgraph $\widetilde{G}$ of $G$ on $V\setminus \mathcal{F}$. Then $f_k$ is still an eigenfunction of $M|_{\widetilde{G}}$ restricting to $\widetilde{G}$ corresponding to the eigenvalue $\lambda_{k}$. We denote by $\mu_1\leq\cdots\leq \mu_{n-|\mathcal{F}|}$ the eigenvalues of $M|_{\widetilde{G}}$. We assume
\[\mu_{\tilde{k}-1}<\lambda_k=\mu_{\tilde{k}}=\cdots=\mu_{\tilde{k}+\tilde{r}-1}<\mu_{\tilde{k}+\tilde{r}}.\]
Observing that all zeros of $f_k|_{\widetilde{G}}$ lie in $\mathcal{F}^c(\widetilde{G},f_k|_{\widetilde{G}})$, we obtain by Lemma \ref{lemma:2}
\begin{equation}\label{eq:5.11}
	\mathfrak{S}(f_k)=\mathfrak{S}(f_k|_{\tilde{G}})\geq \tilde{k}+\tilde{r}-1-\ell'+\ell_+,
\end{equation}
where $\ell'=\ell(G')$. Recall that $G'$ is the induced subgraph of $G$ on the set of nonzeros.

Applying the interlacing result Lemma \ref{thm:interlacing}, we have
\[\lambda_{\tilde{k}+\tilde{r}+|\mathcal{F}|}\geq \mu_{\tilde{k}+\tilde{r}}>\mu_{\tilde{k}+\tilde{r}-1}=\lambda_{k}=\lambda_{k+r-1}.\]
This implies that
\begin{equation}\label{eq:Interlacing}
\tilde{k}+\tilde{r}+|\mathcal{F}|\geq k+r.
\end{equation}
Inserting (\ref{eq:Interlacing}) into (\ref{eq:5.11}) yields
\[\mathfrak{S}(f_k)\geq k+r-|\mathcal{F}|-1-\ell'+\ell_+.\]
This completes the proof.
\end{proof}

%
%
\begin{remark}
In fact, we have an alternative proof of the estimate $\mathfrak{S}(f_k)\leq k+r-1$ in Theorem \ref{Thm:upper bound} by combining (\ref{eq:5.10}) and the interlacing theorem. Let $G'$ be the induced subgraph of $G$ on the set of nonzeros of $f_k$. Let $\mu_1\leq\cdots\leq \mu_{n-z}$ be the eigenvalues of $M|_{G'}$, where $z=|\mathcal{F}|+|\mathcal{F}^c|$ is the total number of zeros of $f_k$. We can assume $\lambda_{k-1}<\lambda_k$, $\lambda_k=\mu_{k'}$ and $\mu_{k'-1}<\mu_{k'}=\cdots=\mu_{k'+r'-1}<\mu_{k'+r'}$.  By Lemma \ref{thm:interlacing}, we derive
\[\lambda_{k'+r'-1}\leq \mu_{k'+r'-1}=\lambda_{k+r-1}.\]
This leads to $k'+r'\leq k+r$.
Inserting it into (\ref{eq:5.10}) leads to
$\mathfrak{S}(f_k)\leq k+r-1.$
\end{remark}

Next, we discuss another application of Lemma \ref{Lemma:5.3}.
\begin{theorem}\label{thm:xuyau}
Let $M=(M_{ij})$ be an $n\times n$ symmetric matrix. Let $G=(V,E)$ be the graph with $V=\{1,2,\ldots,n\}$ and $E=\{\{i,j\}: M_{ij}\neq 0 \,\text{and}\,i\neq j\}$.
Then the multiplicity $r$ of an eigenvalue $\lambda$ of $M$ with an eigenfunction $f$ non-zero at each vertex satisfies
\[c\leq r\leq c+\ell,\]
where $c$ is the number of connected components of $G$ and $\ell=\ell(G)$.
\end{theorem}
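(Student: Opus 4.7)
The plan is to prove $c\leq r$ by a direct block-diagonal decomposition and to prove $r\leq c+\ell$ by applying Lemma \ref{Lemma:5.3} twice to a symmetrized form of $M-\lambda I$.

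First I would handle the lower bound. Reindexing if necessary, $M$ is block-diagonal with one block $M_j$ per connected component $G_j$, $j=1,\ldots,c$. Because $f$ is nowhere zero, each restriction $f|_{G_j}$ is a nonzero eigenfunction of $M_j$ with eigenvalue $\lambda$. Extending each $f|_{G_j}$ by zero to all of $V$ produces $c$ eigenfunctions of $M$ for $\lambda$ with pairwise disjoint supports; they are therefore linearly independent, giving $r\geq c$.

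For the upper bound, set $F:=\diag(f)$, which is invertible since $f$ has no zeros. By Sylvester's law of inertia the matrices $F(M-\lambda I)F$ and $M-\lambda I$ have the same positive index $p$ and negative index $q$, so $p+q=n-r$. Applying Corollary \ref{cor:XY} with $f$ playing the role of the eigenfunction, every function $h:V\to\mathbb{R}$ satisfies
\[
\langle h,F(M-\lambda I)F h\rangle
=\sum_{\{i,j\}\in E}a_{ij}\bigl(h(i)-h(j)\bigr)^2,
\qquad a_{ij}:=-M_{ij}f(i)f(j).
\]
Since $f(i)f(j)\neq 0$ for every pair, $a_{ij}\neq 0$ exactly when $\{i,j\}\in E$, so the graph associated with this quadratic form in the sense of Lemma \ref{Lemma:5.3} is precisely $G$. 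Writing $E(H):=\{\{i,j\}\in E:a_{ij}>0\}$ and $E(H'):=\{\{i,j\}\in E:a_{ij}<0\}$, these two sets partition $E$.

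Now I would apply Lemma \ref{Lemma:5.3} twice. The first application to $\sum a_{ij}(h(i)-h(j))^2$ yields $|E(H)|\leq p+\ell$; the second application to the negated form $\sum(-a_{ij})(h(i)-h(j))^2$, whose positive index of inertia equals $q$ and whose positive-coefficient subgraph is $H'$, yields $|E(H')|\leq q+\ell$. Summing these bounds gives
\[
|E|=|E(H)|+|E(H')|\leq p+q+2\ell=n-r+2\ell,
\]
and substituting the identity $|E|=n-c+\ell$ rearranges to $r\leq c+\ell$. The only real step of substance is recognizing that the two applications of Lemma \ref{Lemma:5.3} to $H$ and its complementary-sign subgraph $H'$ combine cleanly through $|E|=|E(H)|+|E(H')|$; once this is observed, both inequalities fall out immediately.
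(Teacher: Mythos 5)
Your proof is correct and follows essentially the same route as the paper: the lower bound comes from restricting $f$ to the connected components, and the upper bound from applying Lemma \ref{Lemma:5.3} to the quadratic form $\sum_{\{i,j\}\in E}(-M_{ij})f(i)f(j)(x_i-x_j)^2$ and to its negation, then summing the two edge bounds and using $|E|=n-c+\ell$. Your bookkeeping via the negative index $q=n-p-r$ is exactly the paper's $|E_2|\leq n-p-r+\ell$ in slightly different notation.
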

\begin{remark}
Theorem \ref{thm:xuyau} is due to Xu and Yau \cite[Corollary 2.5]{XY12}, where they state it for generalized Lapalcians.
\end{remark}
\begin{proof}
	Since $f$ has no zeros, the restriction of $f$ to any connected component is also an eigenfunction of the same eigenvalue. Therefore, we have $r\geq c$.
	
Consider the quadratic form
\[B=\sum_{\{i,j\}\in E}f(i)(-M_{ij})f(j)(x_i-x_j)^2.\]
We have a partition of the edge set $E=E_1\cup E_2$, where \[E_1=\{\{i,j\}\in E ,\,-f(i)M_{ij}f(j)>0\},\,E_2=\{\{i,j\}\in E,\, -f(i)M_{ij}f(j)<0\}.\]
Applying Lemma \ref{Lemma:5.3} to the quadratic forms $B$ and $-B$, respectively, we obtain
\[|E_1|\leq p+\ell,\,\,\text{and}\,\,|E_2|\leq n-p-r+\ell,\]
where $p$ stands for the positive index of inertia of $B$. Then we derive
	\[|E_1|+|E_2|\leq n+2\ell -r.\]
	
	Since $|E_1|+|E_2|=|E|=n-c+\ell$, we have
	$r\leq c+\ell$.
 \end{proof}

When the induced signed graph has a large number of leaves, i.e., vertices with degree $1$, we have the following non-trivial lower bound estimate via a duality argument.
 \begin{theorem}\label{Thm:lower bound 2}
 Let $M$ be an $n\times n$ symmetric matrix and $\Gamma=(G,\sigma)$ be the induced signed graph. Let $\lambda_{k}$ be the $k$-th eigenvalue of $M$ with an eigenfunction $f_k$ such that $\lambda_{k-1}<\lambda_k$.
Then we have
\[\mathfrak{S}(f_k)\geq k+v_l-1-n-z_l+z_r,\] where $z_l=|\{x: d_x=1, f_k(x)=0\}|$, $z_r=|\{x: d_x=1, f_k(x)\neq 0, f_k(x')=0,\,\text{for}\,\,x'\sim x\}|$, and $v_l$ is the number of leaves.
 \end{theorem}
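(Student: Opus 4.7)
The plan is a duality argument: combine Theorem \ref{Thm:upper bound} applied to the negated matrix $-M$ (whose induced signed graph is $-\Gamma$) with a direct combinatorial lower bound on the sum $\mathfrak{S}(f_k)+\overline{\mathfrak{S}}(f_k)$ obtained by counting singleton components produced by leaves.

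For the dual upper bound I would first observe that $-Mf_k=-\lambda_kf_k$, so $f_k$ is an eigenfunction of $-M$. Writing the spectrum of $M$ as $\lambda_1\leq\cdots\leq\lambda_n$ and letting $r$ be the multiplicity of $\lambda_k$, the hypothesis $\lambda_{k-1}<\lambda_k$ tells us that $k$ is the smallest index at which $\lambda_k$ appears. Hence in the increasing order of eigenvalues of $-M$ the eigenvalue $-\lambda_k$ first appears at position $n-k-r+2$. Applying Theorem \ref{Thm:upper bound} to $-M$ with this smallest index yields
\[\overline{\mathfrak{S}}(f_k) \,\leq\, (n-k-r+2)+r-1 \,=\, n-k+1.\]

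For the combinatorial lower bound I would prove, for any real-valued function $f$ on any signed graph $\Gamma=(G,\sigma)$, the inequality
\[\mathfrak{S}(f)+\overline{\mathfrak{S}}(f) \,\geq\, v_l - z_l + z_r.\]
Let $H$ (resp.\ $\bar H$) be the subgraph on $\Omega:=\{x\in V: f(x)\neq 0\}$ with edges $\{\{x,y\}\in E : f(x)\sigma_{xy}f(y)>0\}$ (resp.\ $<0$); by Lemma \ref{lemma:strong} one has $\mathfrak{S}(f)=c(H)$ and $\overline{\mathfrak{S}}(f)=c(\bar H)$. For each leaf $x$ with unique neighbour $x'$ I distinguish three cases: (i) if $f(x)=0$ (these are the $z_l$ leaves), $x$ contributes to neither graph; (ii) if $f(x)\neq 0$ and $f(x')=0$ (these are the $z_r$ leaves), then $x$ has no neighbour in $\Omega$ and hence is an isolated vertex in both $H$ and $\bar H$, producing two singleton components; (iii) otherwise $f(x)f(x')\neq 0$ (there are $v_l^+:=v_l-z_l-z_r$ such leaves), and the unique incident edge $\{x,x'\}$ is an S-walk in exactly one of $\Gamma,-\Gamma$, so $x$ is a singleton in exactly one of $H,\bar H$. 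Distinct leaves yield distinct singleton components, and every singleton is a connected component, so
\[c(H)+c(\bar H) \,\geq\, 2z_r + v_l^+ \,=\, v_l - z_l + z_r.\]

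Substituting the dual upper bound into this inequality gives
\[\mathfrak{S}(f_k) \,\geq\, (v_l-z_l+z_r) - (n-k+1) \,=\, k+v_l-1-n-z_l+z_r,\]
which is the statement. The main delicate point is the first step: one must correctly locate $-\lambda_k$ within the spectrum of $-M$ so that Theorem \ref{Thm:upper bound} delivers the sharpest bound $n-k+1$, and the hypothesis $\lambda_{k-1}<\lambda_k$ is precisely what guarantees that $k$ is the smallest index of $\lambda_k$ (equivalently, that $-\lambda_k$ occupies positions up to $n-k+1$ in the sorted spectrum of $-M$). Once this is settled, the leaf-counting step is a short elementary argument.
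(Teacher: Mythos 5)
Your proposal is correct and follows essentially the same route as the paper's own proof: the dual upper bound $\overline{\mathfrak{S}}(f_k)\leq n-k+1$ from Theorem \ref{Thm:upper bound} applied on $-\Gamma$, combined with the same three-case leaf count giving $\mathfrak{S}(f_k)+\overline{\mathfrak{S}}(f_k)\geq v_l-z_l+z_r$. Your explicit bookkeeping of where $-\lambda_k$ sits in the spectrum of $-M$, and of why $\lambda_{k-1}<\lambda_k$ is needed there, is a welcome clarification of a step the paper passes over quickly.
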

\begin{proof}
Let $r$ be the multiplicity of $\lambda_k$. Recall that $\overline{\mathfrak{S}}(f_k)$ is the number of strong nodal domains of $f_k$ on $-\Gamma$. Theorem \ref{Thm:upper bound} implies $\overline{\mathfrak{S}}(f_k)\leq n-(k+r-1)+1+r-1=n-k+1$.

Let $x$ be a leaf and $f_k(x)\neq 0$. We denote by $x'$ the only vertex such that $\{x,x'\}\in E$. If $f_k(x')=0$, then the subgraph induced by $\{x\}$ is a strong nodal domain of $f_k$ on both $\Gamma$ and its negation $-\Gamma$. If $f_k(x')\neq 0$, then either $f_k(x)M_{xx'}f_k(x')>0$ or $f_k(x)M_{xx'}f_k(x')<0$. This means that the subgraph induced by $\{x\}$ is a strong nodal domain of $f_k$ on either $\Gamma$ or $-\Gamma$. Therefore, we derive
\begin{equation}\label{eq:strongDual}
\mathfrak{S}(f_k)+\overline{\mathfrak{S}}(f_k)\geq (v_l-z_l-z_r)+2z_r=v_l-z_l+z_r.
\end{equation}
Then we obtain
\[\mathfrak{S}(f_k)\geq v_l-z_l+z_r-\overline{\mathfrak{S}}(f_k)\geq v_l-z_l+z_r-n+k-1.\]
\end{proof}
To conclude this section, we compare the two lower bound estimates of Theorems \ref{Thm:lower bound} and \ref{Thm:lower bound 2} in the following examples.
\begin{example}\label{example:win}
Let $G=(V,E)$ be a graph obtained by adding $7$ leaves to one vertex of the complete graph $K_7$. We consider the symmetric matrix $M=-A$ where $A$ is the adjacent matrix of $G$. Then the induced singed graph of $M$ is $\Gamma=(G,\sigma)$, where $\sigma\equiv +1$.
Let us denote $V=\{1,2,\ldots,14\}$. Let $\{1,2,\ldots,7\}$ be the vertices of the graph $K_7$. Assume that the 7 leaves $\{8,9,\ldots,14\}$ are added to the vertex $1$.
The largest eigenvalue $\lambda_{14}\approx3.05$ of $M$ is simple and its eigenfunction is
\[f_{14}\approx(-3.05,0.38,0.38,0.38,0.38,0.38,0.38,1,1,1,1,1,1,1)^T.\]
It is direct to figure out $\mathfrak{S}(f_{14})=9$, $\ell=|E|-|V|+1=15$, $\ell_+=10$ and $v_{l}=7$.  Theorem \ref{Thm:lower bound} tells $\mathfrak{S}(f_{14})\geq 9$, which is sharp, while Theorem \ref{Thm:lower bound 2} tells $\mathfrak{S}(f_{14})\geq 6$.
\end{example}
In the next example, Theorem \ref{Thm:lower bound 2} provides a better estimate than Theorem \ref{Thm:lower bound}.
\begin{example}
	We consider a complete graph $K_8$ and denote its vertex set by $\{1,\ldots ,8\}$. Let $G=(V,E)$ be a graph obtained by adding $i$ leaves to each vertex $i\in \{1,\ldots,8\}$.   For each $i$, we label the $i$ leaves adjacent to it by $\{8+\frac{i(i-1)}{2}+k: k=1,\ldots,i\}$. Then the vertex set $V=\{1,2,\ldots,44\}$. Consider $M=-A$ where $A$ is the adjacent matrix of $G$. Hence, the induced singed graph of $M$ is $\Gamma=(G,\sigma)$, where $\sigma\equiv +1$. The $41$-st eigenvalue $\lambda_{41}\approx 2.69$ of $M$ is simple and its eigenfunction is
	\begin{align*}
		f_{41} \approx (0.07, & 0.10,  0.16, 0.47, -0.55, -0.17, -0.10, -0.07, -0.02, -0.04, -0.04,\\
		 -0.&06, -0.06,-0.06,-0.17,-0.17,-0.17,-0.17,0.20,0.20,0.20,0.20,\\
		&0.20,0.06,0.06,0.06,0.06,0.06,0.06,0.04,0.04,0.04,0.04,\\
		&0.04,0.04,0.04,0.03,0.03,0.03,0.03,0.03,0.03,0.03,0.03)^T.
	\end{align*}
Notice that the values of $f_{41}$ has different signs at a leaf and the vertex adjacent to it.
It is direct to compute $\mathfrak{S}(f_{41})=38,\,\ell=21,\,\ell_+=6$, and $v_l=36$. Theorem \ref{Thm:lower bound} tells that $\mathfrak{S}(f_{41})\geq41-21+6=26$ while Theorem \ref{Thm:lower bound 2} tells that $\mathfrak{S}(f_k)\geq 41+36-1-44=32$.
\end{example}
\appendix

\section*{Acknowledgement}
We are very grateful to Dong Zhang for discussions on discrete nodal domain theorems of signless Laplacians. We thank Ali Mohammadian for bringing his interesting work \cite{Mohammadian16} to our attention after the submission of our first arXiv version.
This work is supported by the National Key R and D Program of China 2020YFA0713100 and the National Natural Science Foundation of China (No. 12031017).

\end{document}